\newcommand{\bs}[1]{\boldsymbol{#1}}
\newcommand{\bsk}{\boldsymbol{k}}
\newcommand{\bsz}{\boldsymbol{z}}
\newcommand{\bsj}{\boldsymbol{j}}
\newcommand{\bsn}{\boldsymbol{n}}
\newcommand{\bsm}{\boldsymbol{m}}
\newcommand{\bPsi}{\bs{\Psi}}
\newcommand{\bD}{\bs{D}}
\newcommand{\bsX}{\bs{X}}
\newcommand{\bW}{\bs{W}}
\newcommand{\bsc}{\bs{c}}
\newcounter{parentnumber}
\newenvironment{subtheorem}[1]{%
  \counterwithin*{#1}{parentnumber}
  \def\subtheoremcounter{#1}%
  \refstepcounter{#1}%
  \protected@edef\theparentnumber{\csname the#1\endcsname}%
  \setcounter{parentnumber}{\value{#1}}%
  \setcounter{#1}{0}%
  \expandafter\def\csname the#1\endcsname{\theparentnumber.\Alph{#1}}%
  \ignorespaces
  \def\foobar{#1} 
}{%
  \setcounter{\subtheoremcounter}{\value{parentnumber}}%
  \counterwithout*{\foobar}{parentnumber} 
  \ignorespacesafterend
}
\crefname{theorem}{Theorem}{Theorems}
\title{Stochastic collocation methods via $L_1$ minimization using randomized quadratures}
\author{Ling Guo}
\thanks{Ling Guo. Department of Mathematics, Shanghai Normal University, Shanghai, China. L.~Guo was partially supported by NSFC (11101287),
Shanghai Leading Academic Discipline Project (No. S30405) and E¨CInstitutes of Shanghai Municipal Education Commission (E03004)}
\author{Akil Narayan}
\thanks{Akil Narayan. Mathematics Department and Scientific Computing and Imaging Institute, University of Utah, University of Utah, Salt Lake City, UT 84112. A.~Narayan was partially supported by AFOSR FA9550-15-1-0467 and DARPA N660011524053}
\author{Tao Zhou}
\thanks{Tao Zhou. LSEC, Institute of Computational Mathematics and Scientific/Engineering Computing, AMSS, Chinese Academy of Sciences, Beijing, China. T.~Zhou work was supported the National Natural Science Foundation of China (Award Nos. 91530118 and 11571351).}
\author{Yuhang Chen}
\thanks{Yuhang Chen. Mathematics Department and Scientific Computing and Imaging Institute, University of Utah, University of Utah, Salt Lake City, UT 84112.}
\begin{document}
\maketitle

\renewcommand{\thefootnote}{\fnsymbol{footnote}}

\begin{abstract}
In this work, we discuss the problem of approximating a multivariate function by polynomials via $\ell_1$ minimization method, using a random chosen sub-grid of the corresponding tensor grid of Gaussian points.  The independent variables of the function are assumed to be random variables, and thus, the framework provides a non-intrusive way to construct the generalized polynomial chaos expansions, stemming from the motivating application of uncertainty quantification. We provide theoretical analysis on the validity of the approach.  The framework includes both the bounded measures such as the uniform and the Chebyshev measure, and the unbounded measures which include the Gaussian measure.  Several numerical examples are given to confirm the theoretical results.
\end{abstract}



\pagestyle{myheadings}
\thispagestyle{plain}
\markboth{Stochastic $\ell_1$-minimization using randomized quadratures}{}

\section{Introduction}

Stochastic computation has received intensive attention in recent years, due to the pressing need to conduct uncertainty
quantification (UQ) in practical computing. One of the most widely used techniques in UQ is generalized polynomial chaos
(gPC), see e.g. \cite{Ghanem,UQ_book,XiuK2}. In gPC, the stochastic function $f(x)$, with $x\in R^d$, is approximated via $d$-
variate orthogonal polynomials, whose orthogonality is defined by the probability
measure of the input variable $\mathbf{x}$. This becomes one of the most effective procedures
for UQ, and many numerical techniques on how to construct gPC approximations
have been developed. For practical computing, the gPC
stochastic collocation algorithm is highly popular because it allows one to repetitively use existing deterministic
simulation codes and to render the construction of gPC approximation a post-processing step. In the stochastic
collocation framework, one seeks to construct a gPC type orthogonal polynomial approximation
via point-evaluations of $f$. Popular methods for achieving this include sparse grids approximation \cite{BNT,Agarwal2009,bieri,eldred,FWK,GZ,MaZabaras,FabioC3}, pseudo orthogonal projection \cite{NajmGhanem} ,
interpolation \cite{narayan_stochastic_2012}, and least squares approach \cite{Tang_zhou_2014,Zhou_Narayan_Xu,CCMNT,NJZ_MC} to name a
few. For a review of the gPC methodology, see \cite{narayan_stochastic_2015}.

The challenge is in high-dimensional spaces, where the number of collocation nodes grows fast. Since each node
represents a full-scale deterministic simulation, the total number of nodes one can afford is often limited, especially
for large-scale problems. This represents a significant difficulty in constructing a gPC-type approximation using the
existing approaches: it is often not possible to construct a good polynomial approximation using a very limited number
of simulations in a large dimensional random space.

A more recent development in signal analysis is compressive sensing, also known as compressed sampling. Compressive
sensing (CS) deals with the situation when there is insufficient information about the target function. This
occurs when the number of samples is less than the cardinality of the polynomial space for the approximation. CS then
seeks to construct a polynomial approximation by minimizing the norm of the polynomial, typically its $\ell^1$ norm or $\ell^0$ norm (count the nonzero components). Following the seminal work of \cite{candes1,candes2,CandesTao_2005,Donoho_2006}, the theory of CS has generated an enormous
amount of interest in many disciplines and resulted in many newer theoretical results and practical implementations. The success of the CS methods lies in the assumption that in practice many target functions (signals) are sparse,
in the sense that what appear to be rough signals in the time/space domain may contain only a small number of notable. The works of \cite{schwab1,schwab2,doostan1,CDS_2011} show that under weak assumptions, solutions to the stochastic elliptic PDEs exhibit sparsity
in the properly chosen basis.

In a recent work \cite{doostan1}, the idea of CS has been extended to stochastic collocation and resulted in a highly flexible
method. With CS, one can employ arbitrary nodal sets with an arbitrary number of nodes. This can be very helpful in
practical computations. In \cite{doostan1}, some key properties, such as the probability under which the sparse random response
function can be recovered, are studied. Then Yan, Guo and Xiu extends this work in \cite{XiuL1}, which focus on the recoverability of stochastic
solutions in high-dimensional random spaces with random sampling. This is relevant because in UQ simulations the dimensionality is often
determined by the number of random parameters and can be very large. The work in \cite{doostan2} concerned with
convergence analysis and sampling strategies to recover a sparse stochastic function in both Hermite and Legendre
PC expansions from $\ell_1$-minimization problem. Although random sampling methods have been widely used in the CS
framework, a judicious, deterministic choice of points may provide several advantages over randomly-generated points. In \cite{XUZHOU},
the authors use Weil points to recover sparse Chebyshev polynomials. Tang and Zhou proposed a structured sampling method on quadrature points
to recover sparse Legendre polynomials in \cite{TangIa}.

In this work, we investigate the compressive sensing approach for stochastic collocation method, using a random chosen sub-grid from the corresponding tensor grid of Gaussian points. We will provide theoretical analysis on the convergence of such an approach. The main contribution of this work is to show that the method proposed in \cite{TangIa} is actually applicable for most random variables of interest, including measures on bounded domains (e.g., the uniform and Beta distributions) and unbounded domains (e.g., normal distributions). We also provide with several numerical examples to confirm the theoretical results.

Our main result is a sample count criterion for sparse recovery for general polynomial spaces with general measures. The precise statement is given by Theorem \ref{thm:main-sample-count} and can be summarized as follows: suppose we wish to recover a multivariate polynomial expansion of $f(\bs{X})$ from a finite-dimensional subspace of dimension $N$ whose maximum polynomial degree in any dimension is less than $n$. To do this, we use $M$ collocation samples and perform an $\ell^1$ minimization procedure. The $M$ samples are chosen randomly (with the uniform probability law) from a tensor-product Gaussian quadrature grid with $n$ points in each dimension. This method produces an approximation comparable to the best $s$-term expansion of $f$ with high probability if
\begin{align*}
  M \gtrsim L(n) s
\end{align*}
where
\begin{itemize}
  \item $L(n) \leq C^d$ if $\bs{X}$ has independent components, with each component having a Beta distribution.
  \item $L(n) \leq (C n)^{2d/3}$ if $\bs{X}$ is normally-distributed.
  \item $L(n) \leq (C n)^{2d/3}$ if $\bs{X}$ has independent components, with each components having a one-sided exponential distribution.
\end{itemize}

In all cases the constant $C$ is independent of the degree $n$ and the dimension $d$, and depends only on the (one-dimensional) marginal distributions of $\bs{X}$. In the first case, the $C^d$ dependence was established in \cite{TangIa} assuming the special case of $\bs{X}$ a uniform random variable on a hypercube. In the latter two cases where $\bs{X}$ has unbounded state space, the $n^{2d/3}$ dependence seems unpleasant, but this rate is essentially sharp if one uses our strategy for analysis and insists on sub-sampling from a tensor-product Gauss quadrature grid. Despite this dependence, we note that our analysis is quite general, extending to any random vector $\bs{X}$ whose independent components have Beta or Exponential (one- or two-sided) distributions with essentially any shape parameters. Finally, we would like to mention the work by Li and Zhang \cite{Li_Zhang}, where a interpolation scheme (with $M=N$) on a sub-set of Gaussian quadrature points is proposed, and the sub-set therein is chosen according to the corresponding value of the joint probability density function of the random input. We expect that the idea in \cite{Li_Zhang} may be useful also for the compressed sensing approach, and this will be investigated in our future studies.

The rest of the paper is organized as follows. In Section 2, we introduce the gPC approximation, set up the $\ell_1$-minimization problem and definitions and theorems used in the compressed sensing approach. Section 3 gives a short view of Gaussian quadrature, introduces the discrete transform as a discretization of orthogonal polynomial system, and gives the theorems for recovering polynomial chaos when sampling from the discretized system. Several numerical tests are provided in Section 4, and we finally give some conclusions in Section 5.

\section{The setup}
Let $\bs{X}=(X^1 , \ldots , X^d)^\top$ be a random vector with $d$ mutually independent components; each $X^i$ takes values in $\Gamma^i \subset \mathbb{R}.$ Since the variables $\{X^i\}_{i=1}^d$ are mutually independent, they have marginal probability density functions $\rho^i$ associated with random variable $X^i$ that completely characterize the distribution of $\bs{X}$. Define $\Gamma:= \otimes_{i=1}^d\Gamma^i \subset \mathbb{R}^d,$ and let $\rho(\mathbf{x})= \prod_{i=1}^d \rho^i(x^i): \Gamma\rightarrow \mathbb{R}^+$ denote the joint probability density function (PDF) of $\bs{X}.$

In a simple stochastic collocation setup, we consider a $d$-variate function $f:\Gamma \to \mathbb{R}$, and wish to recover information about this function from a finite (ideally small) set of function evaluations. Let $\theta_M=\{\mathbf{x}_1,\cdot\cdot\cdot,\mathbf{x}_M\}\in\Gamma$ be a set of points at which the function values of $f$ are available, and denote these values as  $f_m=f(\mathbf{x}_m), m=1,\cdot\cdot\cdot,M$. We are concerned with the approximation of the function $f$ based on $\{\mathbf{x}_m,f_m\}_{m=1}^{M}$.  Let $V$ be a linear space from which the approximation is sought and let $N=\text{dim} V$. This paper is concerned with the special case of the above setup where $V$ is a polynomial subspace, and the $N = \dim V$ coefficients defining the approximation are underdetermined from $M < N$ samples.

Much of the rest of this section is tasked with introducing our requisite notation. A summary of much of this notation is given in Table \ref{tab:notation}.

\begin{table}
  \begin{center}
  \resizebox{\textwidth}{!}{
    \renewcommand{\tabcolsep}{0.4cm}
    \renewcommand{\arraystretch}{1.3}
    {\scriptsize
    \begin{tabular}{@{}cp{0.8\textwidth}@{}}\toprule
      Symbol(s)    & \\ \midrule
      $d, i$       & Dimension $d$, index $1 \leq i \leq d$. \\
      $\bsn, n_i$  & Multivariate tensor-product quadrature rule size $\bsn$, with $n_i$ points in dimension $i$ \\
      $\bsX, X^i$  & Random vector $\bs{X}$, components $X^i$ \\
      $\Gamma, \Gamma^i$  & State space of $\bs{X}$ and $X^i$, respectively \\
      $\rho, \rho^i$  & Joint density of $\bs{X}$, marginal density of $X^i$, respectively \\
      $\phi_{\bsk}, \varphi^i_{k_i}$ & Multivariate gPC basis element of degree $|\bsk|$, dimension-$i$ univariate gPC basis element of index $k_i$, respectively. Each element has polynomial degree $|\bsk|$, and $k_i$, respectively.\\
      $\Lambda, \Lambda^P_{\bsn}$ & General multi-index set, multi-index set $\left\{ \bsk\in \N_0^d \, | \, \bsk \leq \bsn \right\}$, respectively.\\
    \bottomrule
    \end{tabular}
  }
    \renewcommand{\arraystretch}{1}
    \renewcommand{\tabcolsep}{12pt}
  }
  \end{center}
  \caption{Notation used throughout this article.}\label{tab:notation}
\end{table}

\subsection{Generalized polynomial chaos}
We are primarily concerned with the approximation $f$ in a polynomial subspace $V$. In particular, we seek this approximation using the Generalized polynomial chaos (gPC) framework. The basic idea of gPC is to represent the function $f$ as a polynomial of the random variables $X^i$; the basis for approximation is typically taken as a set of orthogonal polynomials. For each marginal density $\rho^i,$  we can define the univariate gPC basis elements $\varphi^i_n$, polynomials of degree $n$, via the orthogonality relation
\begin{align}\label{eq:phi-orthonormality}
  \mathbb{E} \left[\varphi^i_n(X^i) \varphi^i_{\ell}(X^i)\right] = \int_{\Gamma^i} \varphi^i_n(s) \varphi^i_{\ell}(s) \rho^i(s) ds = \delta_{n,\ell}, \quad n,\,\,\ell \geq 0,
\end{align}
with $\delta_{n,\ell}$ the Kronecker delta function. Up to a multiplicative sign, this defines the polynomials $\varphi^i_n$ uniquely; thus the probability measure $\rho^i$ determines the type of orthogonal polynomial basis. For example, the Gaussian (normal) distribution yields the Hermite polynomials, the uniform distribution pairs with Legendre polynomials, etc. For a detailed account of the correspondence, see \cite{XiuK2}. In this paper, each marginal density $\rho^i$ may be associated with any of the classical orthogonal polynomial families: this includes Beta distributions (Legendre, Chebyshev, and Jacobi polynomials), normal distributions (Hermite polynomials), and exponential distributions (Laguerre polynomials).

For the multivariate ($d>1$) case, we will use standard multi-index notation. For some $\bsn \in \N_0^d$, it has components denoted $n_i$, i.e., $\bsn = \left(n_1, \ldots, n_d\right)$. Its modulus is its $\ell^1$ norm, i.e., the sum of its components $|\bsn| = \sum_{i=1}^d n_i$. The factorial is the product of its componentwise factorials, $\bsn ! = \prod_{i=1}^d n_i !$, where we take $0 ! = 1$. Given another $d$-dimensional index $\bsj$, we have
\begin{align*}
  \bsj + \bsn &= \left( j_1 + n_1, \ldots, j_d + n_d \right), & \bsn^{\bsj} &= \prod_{i=1}^d n_i^{j_i}.
\end{align*}
Boldface explicit constants, e.g., $\bs{3}$ are multi-indices with the explicit value repeated:
\begin{align*}
  \bs{3} = \left(3, \ldots, 3\right) \in \N^d_0.
\end{align*}
A partial ordering is defined on multi-indices:
\begin{align*}
  \bsj < \bsn \hskip 10pt \Longleftrightarrow \hskip 10pt j_i < n_i\;\; \forall\;\; i=1,\ldots, d,
\end{align*}
with a similar definition for $\leq$. We will occasionally use multi-index operations on $d$-dimensional vectors whose entries are non-integers; given the definitions above, the meaning of such notation should be clear.

One convenient representation for a multivariate gPC basis is as a product of the univariate gPC polynomials in each direction. For a multi-index $\bsn \in \N_0^d$, we have
\begin{align}
\label{gpcbais}
  \phi_{\bsn}(\mathbf{x}) = \prod_{i=1}^d \varphi^{i}_{n_i}\left(x^{i}\right).
\end{align}
The product functions $\phi_{\bsn}$ are $L^2$ orthogonal under the joint probability density function $\rho$ for $\mathbf{X}$:
\begin{align}
\label{othorgonal}
\mathbb{E} \left[\phi_{\bsn}(\mathbf{X}) \phi_{\bsj}(\mathbf{X})\right] = \int_{\Gamma} \phi_{\bsn}(\mathbf{x}) \phi_{\bsj}(\mathbf{x}) \rho(\mathbf{x}) d\mathbf{x} = \delta_{\bsn,\bsj},
\end{align}
where $\delta_{\bsn, \bsj} = \prod_{i=1}^d \delta_{n_i, j_i}$.

A gPC expansion for $f$ is given by
\begin{align}
  f(\mathbf{X}) = \sum_{\bsj\in \mathbb{N}_0^d}{c}_{\bsj} \phi_{\bsj}(\mathbf{X}),
\end{align}
where the coeficients $c_{\bsj}$ are the unknowns that must be computed from available knowledge of the function $f$. If $f(\bsX)$ has finite variance, then this expansion is well-defined and convergent in the $L^2$ sense under fairly weak assumptions on the distribution of $\V{X}$ \cite{ernst_convergence_2012}.

For computational purposes, the above gPC expansion must be truncated. One widely-used approach is to approximate $f$ in a \textit{total-degree} space $V = T_n$:
\begin{align*}
  \Lambda^T_{n} &= \left\{ \bsk \in \mathbb{N}_0^d\, | \, \sum_{i=1}^d k_i \leq n \right\}, & T_n &= \mathrm{span} \left\{ \phi_{\bsk}\, | \, \bsk \in \Lambda^T_{n} \right\}.
\end{align*}
The dimension of $T_n$ is
\begin{align}\label{eq:td-dim}
N=  \# \Lambda^T_{n} \triangleq \dim T_n = \left( \begin{array}{c} d+n \\ n \end{array}\right).
\end{align}
Another common index space is the \textit{tensor-product} index space; for a maximum degree $n \in \N_0$,
\begin{align*}
  \Lambda^P_{n} &= \left\{ \bsk \in \mathbb{N}_0^d\, | \, \max_i k_i \leq n \right\}, & P_n &= \mathrm{span} \left\{ \phi_{\bsk}\, | \, \bsk \in \Lambda^P_{n} \right\}.
\end{align*}
Note that the tensor-product space is larger than the total degree space, $\Lambda^T_n \subset \Lambda^P_n$, and that its cardinality is $\# \Lambda^P_n = (n+1)^d$.
We can define anisotropic versions of the tensor-product space: Given a multi-index $\bsn \in \N_0^d$, we can allow polynomials up to degree $n_i$ in dimension $i$,
\begin{align*}
  \Lambda^P_{\bsn} &= \left\{ \bsk \in \mathbb{N}_0^d\, | \, \bsk \leq \bsn \right\}, & P_{\bsn} &= \mathrm{span} \left\{ \phi_{\bsk}\, | \, \bsk \in \Lambda^P_{\bsn} \right\}.
\end{align*}
The total degree space $T_n$ is a more practical space to use in computations compared to the tensor-product space $P_n$ because the latter has extremely large dimension when $d$ or $n$ grows.

We remark that for any finite multi-index set $\Lambda$, it is always possible to define an ordering scheme such that the multi-indices can be ordered via a single index. That is, we have
\begin{align*}
  \{\phi_{\bsk}(\mathbf{x})\}_{\bsk \in \Lambda}\Leftrightarrow \{\phi_j(\mathbf{x})\}_{j=1}^{N},
\end{align*}
for some one-to-one correspondence between $j \in \{1, \ldots, N\}$, with $N = \# \Lambda$. For any such $\Lambda$, $V$ is the $N$-dimensional polynomial subspace formed from the span of $\phi_{\bsk}$ for $\bsk \in \Lambda$.

One of our main tasks is to estimate the projected function of $f$ in the total degree space $T_n$, i.e.
\begin{align}
\label{eq:pce}
f_n=\sum_{\bsk \in \Lambda^T_{n}}c_{\bsk}\phi_{\bsk}=\sum_{j=1}^{N}{c}_j \phi_j.
\end{align}
using interpolation conditions on some data $f(\mathbf{x}_m),m=1,...,M$.

\subsection{Compressive sensing approach} We now present the basic formulation for the stochastic collocation methods in the
compressed sensing framework. In the framework of this paper, the compressive sensing approach can be described as follows. Given a set of $M$ realizations $\{\mathbf{x}_{i}\}_{i=1}^M$
, with corresponding outputs $\mathbf{f}=[f({\mathbf x}_1),\ldots,f({\mathbf x}_M)]^\top $, we now seek a solution that satisfies
\begin{align}\label{eq:normal_equation}
  \bPsi \bsc = \bs{f}
\end{align}
where $\mathbf{c}= ({c}_1,\cdot\cdot\cdot,{c}_{N})^\top$ is the coefficient vector of gPC (\ref{eq:pce}),
and
\begin{align*}
\mathbf{\Psi}=(\Psi_{ij})_{1\leq i\leq M,1\leq j\leq N} \in \mathbb{R}^{M \times N} \quad  \Psi_{ij}=\phi_j(\mathbf{x}_i),
\end{align*}
is the Vandermonde-like matrix, often referred to as the design matrix.

This problem is determined when $M = N$, overdetermined when $M > N$, and underdetermined when $M < N$.
It is the underdetermined case that is considered here. This is often encountered in practice, especially in high
dimensions with the total degree space $T_K$ since $\dim T_K \sim K^d$ can be extremely large. In general when $d\gg 1$, the cardinality $N$ of most standard polynomial spaces becomes extremely large, even when the order of the polynomials is moderate. On the other hand, in many practical applications the evaluation of the target function $f({\mathbf x}_i)$ is expensive and one often has much smaller number of samples $M$ than the number of gPC coefficients $N$, i.e. $M\ll N$. Thus  problem (\ref{eq:normal_equation}) becomes ill-posed and we need some form of regularization to obtain a unique solution.

One efficient method uses an $\ell_1$-minimization algorithm which, under certain conditions, provides a means of identifying sparse coefficient vectors from a limited amount of data. A polynomial chaos expansion is defined as $s$-sparse when $\|\bs{c}\|_0\leq s$, i.e the number of non-zero coefficients, does not exceed $s$. An $\ell_1$-minimization scheme attempts to find dominant gPC coefficients by solving
the optimization problem
\begin{align}
\label{eq:l1minimization}
\text{argmin} \|\mathbf{c}\|_1 \quad \text{subject to} \quad \bPsi \bsc = \bs{f},
\end{align}
where $\|\cdot\|_{1}$ is the $\ell_1$ norm on vectors. The advantage of the above formulation (in contrast to optimizing over $\|c\|_0$) is that it is a convex problem,
and so computational solvers for convex problems may be leveraged (see \cite{Osher1,yangzhang}). Under certain conditions, the solution to the $\ell_1$ optimization problem also solves the $\ell_0$ problem (see Theorem \ref{thm:l1-l0-connection} below). This $\ell_1$ minimization problem is
often referred to as \textit{Basis Pursuit}. Other types of minimization problems such as \textit{Basis Pursuit Denoising} and \textit{Least
Absolute Shrinkage Operator} (LASSO) can be found in \cite{donoho2,tibs} and references therein.

\subsection{Recovery via $\ell_1$ minimization}
The ability of $\ell_1$-minimization method (\ref{eq:l1minimization}) to determine the dominant coefficients of the gPC expansion is determined by the properties of the measurement matrix $\bPsi$ and the sparsity of $\bsc$. We need some definitions to make this precise.

\begin{definition}
The error of the best $s$-term approximation of a vector $\mathbf c \in
R^N$ in the $\ell^p$-norm is defined as
\begin{equation}
\label{best_s_term}
\sigma_{s,p}(\mathbf c) = \inf_{\|\mathbf{y}\|_0 \leq s}\| \mathbf {y-c}\|_p.
\end{equation}
\end{definition}
Clearly, $\sigma_{s,p}(\mathbf c)=0$ if $\mathbf c$ is $s$-sparse.

\begin{definition}[restricted isometry constant\cite{candes1,candes2}]\label{def:RIP}
Let $\mathbf{D}$ be an $M\times N$ matrix. Define  the restricted
isometry constant (RIC) $\delta_s<1$ to be the smallest positive
number such that the inequality
\begin{eqnarray}
\label{RIPmatrix}
(1-\delta_s)\|\mathbf{c}\|_2^2 \leq \| \mathbf{Dc}\|_2^2 \leq
(1+\delta_s) \| \mathbf{c}\|_2^2
\end{eqnarray}
holds for all $\mathbf{c} \in R ^N$ of sparsity at most $s$. Then,
the matrix $\mathbf{D}$ is said to satisfy the $s$-restricted
isometry property (RIP) with restricted isometry constant $\delta_s$.
\end{definition}

\begin{theorem}[Sparse recovery for RIP-matrices \cite{candes2,RauhutWard,Cai+Wang+Xu_2010}]\label{thm:l1-l0-connection}
Let $\mathbf{D} \in R^{M\times N}$ be a matrix with RIC satisfying $\delta_s<0.307$.
For any given $\tilde{\mathbf{c}}\in R^N$, let $\mathbf{c}^{\#}$ be the solution of the $\ell_1$-minimization
\begin{equation}
\label{BP}
\text{argmin} \| \mathbf{c}\|_1 \qquad \mbox{subject to}\quad \mathbf{Dc}=\mathbf{D}\tilde{\mathbf{c}}.
\end{equation}
Then the reconstruction error satisfies
\begin{eqnarray}
\label{BPerror}
\| \mathbf{c}^{\#}-\tilde{\mathbf{c}}\|_2 \leq C
\frac{\sigma_{s,1}(\tilde{\mathbf{c}})}{\sqrt{s}}
\end{eqnarray}
for some constant $C>0$ that depends only on $\delta_{s}$. In
particular, if $\tilde{\mathbf{c}}$ is $s$-sparse then reconstruction is
exact, i.e., $\mathbf{c}^{\#}=\tilde{\mathbf{c}}$.
\end{theorem}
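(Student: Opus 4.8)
This is the classical statement that the restricted isometry property implies stable and robust recovery by $\ell_1$ minimization, so I would follow the standard Cand\`es-type argument, using the sharpened bookkeeping (the ``shifting inequality'' of \cite{Cai+Wang+Xu_2010}) that produces the explicit threshold $\delta_s < 0.307$ rather than the cruder classical condition $\delta_{2s} < \sqrt{2}-1$. First set $\bs{h} := \bs{c}^{\#} - \tilde{\bs{c}}$; the constraint in \eqref{BP} gives $\bs{D}\bs{h} = \bs{0}$, and the goal is to bound $\|\bs{h}\|_2$. Let $S \subset \{1,\ldots,N\}$ index the $s$ largest-in-magnitude components of $\tilde{\bs{c}}$, so that $\|\tilde{\bs{c}}_{S^c}\|_1 = \sigma_{s,1}(\tilde{\bs{c}})$. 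Optimality gives $\|\tilde{\bs{c}}\|_1 \ge \|\bs{c}^{\#}\|_1 = \|\tilde{\bs{c}}_S + \bs{h}_S\|_1 + \|\tilde{\bs{c}}_{S^c} + \bs{h}_{S^c}\|_1$, and applying the triangle inequality to each block on the right yields the cone condition
\[
  \|\bs{h}_{S^c}\|_1 \;\le\; \|\bs{h}_S\|_1 + 2\,\sigma_{s,1}(\tilde{\bs{c}}).
\]

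Next I would sort the components of $\bs{h}$ restricted to $S^c$ by decreasing magnitude and partition $S^c$ into consecutive blocks $T_1, T_2, \ldots$ (setting $T_0 := S$), the block sizes being a fixed multiple of $s$ chosen, following \cite{Cai+Wang+Xu_2010}, to optimize the final constant. Since $\bs{D}\bs{h} = \bs{0}$ we have $\bs{D}\bs{h}_{T_0\cup T_1} = -\sum_{j\ge 2}\bs{D}\bs{h}_{T_j}$; applying the lower bound in \eqref{RIPmatrix} to the left side and the standard RIP inner-product estimate $|\langle \bs{D}\bs{u},\bs{D}\bs{v}\rangle| \le \delta\,\|\bs{u}\|_2\|\bs{v}\|_2$ (valid for disjointly supported sparse $\bs{u},\bs{v}$) to each term on the right gives an inequality of the schematic form $(1-\delta)\|\bs{h}_{T_0\cup T_1}\|_2 \lesssim \delta \sum_{j\ge 2}\|\bs{h}_{T_j}\|_2$. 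The shifting inequality bounds $\sum_{j\ge 2}\|\bs{h}_{T_j}\|_2$ by a constant multiple of $s^{-1/2}\|\bs{h}_{S^c}\|_1$; combining this with the cone condition and with $\|\bs{h}_S\|_1 \le \sqrt{s}\,\|\bs{h}_{T_0\cup T_1}\|_2$ produces a self-referential estimate $\|\bs{h}_{T_0\cup T_1}\|_2 \le \alpha\,\|\bs{h}_{T_0\cup T_1}\|_2 + \beta\, s^{-1/2}\sigma_{s,1}(\tilde{\bs{c}})$ in which $\alpha$ depends only on $\delta_s$ and the block-size choice. The hypothesis $\delta_s < 0.307$ is precisely what forces $\alpha < 1$, so this rearranges to $\|\bs{h}_{T_0\cup T_1}\|_2 \lesssim s^{-1/2}\sigma_{s,1}(\tilde{\bs{c}})$; the tail $\|\bs{h}_{(T_0\cup T_1)^c}\|_2$ is controlled by the same shifting and cone estimates, and the triangle inequality assembles $\|\bs{h}\|_2 \lesssim s^{-1/2}\sigma_{s,1}(\tilde{\bs{c}})$, which is \eqref{BPerror}. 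Exactness when $\tilde{\bs{c}}$ is $s$-sparse is the special case $\sigma_{s,1}(\tilde{\bs{c}}) = 0$.

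The only genuinely delicate point is obtaining the sharp constant $0.307$: this rests on the shifting inequality --- a combinatorial estimate showing that the $\ell_2$ mass of a sorted nonnegative sequence over a block is dominated by the $\ell_1$ mass over the preceding, possibly shifted, block --- together with the optimization of the auxiliary block sizes; everything else is the routine RIP template. Since the statement is quoted directly from \cite{candes2,RauhutWard,Cai+Wang+Xu_2010}, in the paper one would simply cite it, and the sketch above records how to reconstruct the argument if desired.
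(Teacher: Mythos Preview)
Your proposal is correct, and you have already anticipated the situation exactly: the paper does not prove this theorem at all but simply states it with citations to \cite{candes2,RauhutWard,Cai+Wang+Xu_2010} and uses it as a black box. Your sketch of the Cand\`es-type argument with the Cai--Wang--Xu shifting inequality is an accurate reconstruction of the proof in those references, and nothing further is required.
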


We consider a system $\left\{\psi_k(\bs{x})\right\}_k$ that is orthonormal with respect to a density $\nu(\bs{x})$ a \textit{bounded orthonormal system} if it satisfies:
\begin{align}\label{bos}
\max_{1 \leq k \leq N}\| \psi_k\|^2_{\infty}
=\max_{1 \leq k \leq N}\sup_{\mathbf x \in \supp \nu}|\psi_k(\mathbf x)|^2 \leq L(N) < \infty.
\end{align}
If the bound $L$ is independent of $N$, then we call it a {\em uniformly bounded orthonormal system}, satisfying
\begin{equation}\label{ubos}
\sup_{1 \leq k}\| \psi_k\|^2_{\infty}
=\sup_{1 \leq k}\sup_{\mathbf{x} \in \supp \nu}|\psi_k(\mathbf x)|^2 \leq L,
\end{equation}
for some $L \geq 1$. For such systems, there is a precise undersampling rate for which the coefficient vector
can be recovered with high probability when we solve (\ref{eq:l1minimization}).

\begin{theorem} [RIP for bounded orthonormal systems \cite{Rauhut,RauhutWard}]
\label{RIP_bos}
Let $\mathbf{D}\in R^{M\times N}$ be the interpolation matrix with
entries $\{d_{ij}=\psi_j({\mathbf x}_i)\}$ from (\ref{eq:l1minimization}),
where the points ${\mathbf x}_i, i=1,\dots, M$, are i.i.d. random samples
drawn from the orthogonalizing measure $\nu$ for the bounded
orthonormal system $\{\psi_j\}$ satisfying \eqref{bos}. For some $\delta>0,$ assuming that
\begin{equation}
\label{num_RIP}
M \geq C \delta^{-2} L s \log^3(s)\log(N),
\end{equation}
then with probability at least $1-N^{-\gamma\log^3(s)}$, the RIC
$\delta_s$ of $\frac{1}{\sqrt{M}} \mathbf{D}$ satisfies $\delta_s \leq
\delta$. Here the $C$, $\gamma>0$ are generic constants.
\end{theorem}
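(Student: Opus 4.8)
The statement is a now-classical fact from the theory of compressive sensing, and the plan is to follow the chaining-based argument of \cite{Rauhut,RauhutWard}. Write $\mathbf{A} = \frac{1}{\sqrt{M}}\mathbf{D}$ and $\mathbf{Y}_i = (\psi_1(\mathbf{x}_i), \ldots, \psi_N(\mathbf{x}_i))^\top$, so that the rows of $\mathbf{A}$ are $\frac{1}{\sqrt{M}}\mathbf{Y}_i^\top$ with $\mathbf{Y}_1, \ldots, \mathbf{Y}_M$ independent and $\mathbb{E}[\mathbf{Y}_i \mathbf{Y}_i^\top] = \mathbf{I}_N$ by orthonormality of $\{\psi_k\}$ under $\nu$. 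The first step is to recast the restricted isometry constant as the supremum of a centered empirical process,
\begin{align*}
  \delta_s = \sup_{\mathbf{c} \in \mathcal{S}_{s,N}} \left| \, \|\mathbf{A}\mathbf{c}\|_2^2 - \|\mathbf{c}\|_2^2 \, \right|, \qquad \mathcal{S}_{s,N} := \left\{ \mathbf{c} \in \mathbb{R}^N \,:\, \|\mathbf{c}\|_0 \leq s,\ \|\mathbf{c}\|_2 = 1 \right\},
\end{align*}
in which each $\|\mathbf{A}\mathbf{c}\|_2^2 = \frac{1}{M}\sum_{i=1}^M |\langle \mathbf{Y}_i, \mathbf{c}\rangle|^2$ has mean $\|\mathbf{c}\|_2^2$. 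Everything then splits into (i) bounding $\mathbb{E}\,\delta_s$ and (ii) establishing concentration of $\delta_s$ about its mean.

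For (i) the plan is to symmetrize, passing to the conditional Rademacher average $\mathbb{E}\, \mathbb{E}_\varepsilon \sup_{\mathbf{c}\in \mathcal{S}_{s,N}} \left| \frac{1}{M}\sum_i \varepsilon_i |\langle \mathbf{Y}_i, \mathbf{c}\rangle|^2 \right|$, and then to apply Dudley's entropy integral (or generic chaining) to the Rademacher process in $\mathbf{c}$. Since this process is quadratic in $\mathbf{c}$, its natural pseudometric is controlled by products of $\ell^\infty$-type seminorms such as $\max_i |\langle \mathbf{Y}_i, \mathbf{c}\rangle|$, and it is precisely here that the bounded orthonormal system hypothesis \eqref{bos} enters: for $\mathbf{c}\in\mathcal{S}_{s,N}$ supported on $S$ with $|S|\leq s$, Cauchy--Schwarz gives $|\langle \mathbf{Y}_i, \mathbf{c}\rangle| \leq \|\mathbf{c}\|_2 \big( \sum_{k\in S}|\psi_k(\mathbf{x}_i)|^2 \big)^{1/2} \leq \sqrt{sL}$. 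One estimates the metric entropy of $\mathcal{S}_{s,N}$ in two scale regimes --- a union over the $\binom{N}{s}$ coordinate supports together with a volumetric covering of an $s$-dimensional ball at coarse scales, and Maurey's empirical method at fine scales --- which produces the factor $\log\binom{N}{s} \lesssim s\log(eN/s)$ and the powers of $\log s$. Assembling the Dudley integral yields an estimate of the form $\mathbb{E}\,\delta_s \lesssim \max\{\kappa,\kappa^2\}$ with $\kappa$ of order $\sqrt{L\,s\,\log^2(s)\,\log(N)/M}$, so the hypothesis $M \geq C\delta^{-2} L\,s\,\log^3(s)\,\log(N)$ (with one factor of $\log s$ to spare) forces $\mathbb{E}\,\delta_s \leq \delta/2$.

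For (ii) one applies a deviation inequality for suprema of empirical processes (Talagrand's / Bousquet's inequality): over $\mathcal{S}_{s,N}$ the summands $|\langle \mathbf{Y}_i, \mathbf{c}\rangle|^2$ are bounded by $sL$ (again by \eqref{bos}) with variance likewise $\lesssim sL$, so that
\begin{align*}
  \mathbb{P}\!\left( \delta_s \geq \mathbb{E}\,\delta_s + t \right) \;\leq\; \exp\!\left( - \frac{c\, M\, t^2}{s L\,\big( 1 + \mathbb{E}\,\delta_s + t \big)} \right), \qquad t > 0 .
\end{align*}
Choosing $t = \delta/2$ and using $M \gtrsim \delta^{-2}\, sL\,\log^3(s)\,\log(N)$ bounds the exponent by $-\gamma\,\log^3(s)\,\log(N)$, i.e.\ the failure probability by $N^{-\gamma\log^3(s)}$, which is the claim. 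I expect the genuine obstacle to be step (i): setting up the correct data-dependent pseudometric, estimating the entropy of the $s$-sparse sphere in both scale regimes, and tracking the logarithmic powers through the chaining integral; the fact that the sample count scales \emph{linearly} in $L$ is the direct imprint of the uniform bound \eqref{bos} entering through $\max_i|\langle \mathbf{Y}_i,\mathbf{c}\rangle| \leq \sqrt{sL}$, which is exactly why the later sections work hard to keep $L = L(n)$ under control.
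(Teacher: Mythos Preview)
Your sketch is a faithful outline of the chaining-plus-concentration argument from \cite{Rauhut,RauhutWard}, but note that the paper does not prove Theorem~\ref{RIP_bos} at all: it is quoted verbatim from those references and used as a black box. The paper's own work lies downstream --- establishing the discrete orthonormality of the $\psi_{\bsk,\bsn}$ under $\nu_{\bsn}$ (Lemma~\ref{lemma:general-psi-orthonormality}), bounding $L_i(n)$ for various weight families (Lemmas~\ref{lemma:bounds}), and then invoking Theorem~\ref{RIP_bos} (together with its sampling-without-replacement variant, Lemma~\ref{lemma:row-subsample}) to obtain Theorem~\ref{thm:main-sample-count}. So there is no ``paper's own proof'' of this statement to compare against; your proposal is essentially a summary of the proof in the cited sources, and as such it is correct in outline.
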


The Monte Carlo (MC) sampling method utilizing iid samples is very promising in the CS framework, and the result above is one of the foundational tools in this regard. The authors in \cite{RauhutWard} use the concept of bounded orthonormal systems to quantify recovery of sparse expansion coefficients in a univariate Legendre polynomial basis. They exploit the fact that weighting Legendre polynomials by the factor $(1 - x^2)^{1/4}$ makes these polynomials a uniformly bounded system. The strategy is then to perform recovery with weighted/preconditioned Legendre polynomials, sampling from the appropriate biased measure that retains orthogonality; in this case this biased measure is the Chebyshev measure.

\begin{subequations}\label{eq:notation}
This idea of preconditioning systems is a theme in recent $\ell_1$ recovery procedures. Let $\bPsi$ denote an $M \times N$ matrix containing evaluations of the $N$ multivariate polynomials $\phi_n$ at collocation locations $\mathbf{x}_m$, with $\bW$ a diagonal $M \times M$ matrix:
\begin{align}\label{eq:psi-w-def}
  (\Psi)_{m,n} &= \phi_n(\mathbf{x}_m), & (W)_{m,m} &= w_m > 0,
\end{align}
where $w_m$ are weights that will be specified. We can introduce an $M \times N$ weighted matrix $\bD$ that serves as the matrix used to recover sparse coefficients in the weighted setting, defined as
\begin{align}\label{eq:D-def}
  \bD = \sqrt{\bW} \bPsi.
\end{align}
\end{subequations}
For an unknown function $f(\mathbf{x})$, the $M \times 1$ vector $\bs{f}$ with entries $(f)_m = f(\mathbf{x}_m)$ contains evaluations of $f$. The standard ``unweighted" $\ell_1$ optimization solves,
\begin{align}
\label{eq:unweighted-l1minimization}
\text{argmin} \|\mathbf{c}\|_1 \quad \text{subject to} \quad \bPsi \bsc = \bs{f},
\end{align}
whereas a weighted version is given by
\begin{align}
\label{eq:weighted-l1minimization}
\text{argmin} \|\mathbf{c}\|_1 \quad \text{subject to} \quad \mathbf{D} \mathbf{c} = \sqrt{\bW} \mathbf{f}.
\end{align}
Note that we introduce $\bW$ as a matrix of positive weights but use only its square root in the formulations \eqref{eq:D-def} and \eqref{eq:weighted-l1minimization}. While cumbersome at present, this choice will be notationally convenient later.

We can now summarize some existing methods for sparse recovery of multivariate Legendre expansions. The main results for recovery in the formulations \eqref{eq:unweighted-l1minimization} and \eqref{eq:weighted-l1minimization} attempt to show that the system matrices ($\bPsi$ and $\bD$, respectively) satisfy the conditions of Theorem \ref{RIP_bos} and thus can invoke Theorem \ref{thm:l1-l0-connection} to show convergence of the algorithm.

The authors in \cite{XiuL1} extend the univariate results of \cite{RauhutWard} to high-dimensional problems, for both the original (i.e., unweighted) $\ell_1$-minimization and the preconditioned (i.e., weighted) $\ell_1$-minimization.
\begin{theorem} [Recoverability with multivariate Legendre Polynomials \cite{XiuL1}]
\label{recovery_Legendre}
Let $\{\phi_j\}_{j=0}^{N-1}$ be the multivariate Legendre polynomial basis elements of the total degree space $T_n^d$, and let
$f(\mathbf{x})=\sum\limits_{j=0}^{N-1}\widetilde{c} \phi_j$ be an arbitrary polynomial with coefficient vector
$\widetilde{\bsc}$. For some nodal array $\{\mathbf{x}_i\}_{1\leq i\leq M}$, let the weights $w_m$ in \eqref{eq:notation} be defined by 
\begin{align}\label{eq:chebyshev-weights}
  w_{m}=\bigg (\frac{2}{\pi}\bigg )^{d}\prod\limits_{n=1}^d\bigg (1-(\mathbf{x}_i^n)^2\bigg )^{1/2}.
\end{align}
1. Assume $d \geq n$. With $\{\mathbf{x}_i\}_{1\leq i\leq M}$ i.i.d random samples drawn from the uniform measure on $[-1,1]^d$, and if
\[
M> 3^ns\log^3(s)log(N),
\]
then with high probability the solution $\bs{c}^\sharp$ to the direct $\ell_1$ minimization problem \eqref{eq:unweighted-l1minimization} is within
a factor of the best s-term error:
\[
Pr\bigg [\|{\mathbf{c}}^\sharp-\widetilde{\bsc}\|_2 \leq \frac{C\sigma_s(\widetilde{\bsc})_1}{\sqrt{s}}\bigg ]\geq 1-N^{-\gamma\log^3(s)}
\]
2. Let $\{\mathbf{x}_i\}_{1\leq i\leq M}$ be i.i.d random samples drawn from the Chebyshev measure, and assume that
\[
M> 2^ds\log^3(s)log(N).
\]
Then with high probability, the solution $\bs{c}^\sharp$ to the preconditioned/weighted $\ell_1$ minimization problem \eqref{eq:weighted-l1minimization}
 is within a factor of the best s-term error:
\[
Pr\bigg [\|\bsc^\sharp-\widetilde{\bsc}\|_2 \leq \frac{C\sigma_s(\widetilde{\bsc})_1}{\sqrt{s}}\bigg ]\geq 1-N^{-\gamma\log^3(s)}
\]
For both of the above cases, the constants $C$ and $\gamma$ are universal.
\end{theorem}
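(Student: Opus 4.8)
The plan is to treat both parts by the same two-step reduction. First I would exhibit the sampling matrix as $\tfrac{1}{\sqrt{M}}$ times the interpolation matrix of an i.i.d.\ sample of a \emph{bounded orthonormal system} in the sense of \eqref{bos}, and read off the admissible constant $L$. Then Theorem \ref{RIP_bos}, applied with (say) $\delta = 0.3$, turns the stated sample count into the bound $\delta_s < 0.307$ for that normalized matrix, and Theorem \ref{thm:l1-l0-connection} turns this into the best-$s$-term recovery estimate. Since all the dependence on $d$ and $n$ will be concentrated in $L$, the failure probability $N^{-\gamma\log^3(s)}$ and the universality of $C$ and $\gamma$ are inherited directly from those two theorems, and the numerical constant implicit in each sample-count condition is the one furnished by the choice of $\delta$.

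For part 1 the relevant system is $\{\phi_{\bsk}\}_{\bsk\in\Lambda^T_{n}}$, which by \eqref{othorgonal} is orthonormal for the uniform measure $\nu$ on $[-1,1]^d$; as the $\mathbf{x}_i$ are i.i.d.\ draws from $\nu$, Theorem \ref{RIP_bos} applies to $\tfrac{1}{\sqrt{M}}\bPsi$. Because $f$ is a polynomial, $\bs{f}=\bPsi\widetilde{\bsc}$, so the constraint $\bPsi\bsc=\bs{f}$ of \eqref{eq:unweighted-l1minimization} is exactly $\tfrac{1}{\sqrt{M}}\bPsi\bsc=\tfrac{1}{\sqrt{M}}\bPsi\widetilde{\bsc}$, the hypothesis of Theorem \ref{thm:l1-l0-connection}. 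It remains to identify $L$. For the univariate Legendre polynomial $\varphi_k$ orthonormal on $[-1,1]$ one has the exact value $\|\varphi_k\|_\infty^2=2k+1$, the supremum being attained at the endpoints; hence by the tensor structure \eqref{gpcbais}, $\|\phi_{\bsk}\|_\infty^2=\prod_{i=1}^d(2k_i+1)$. The elementary inequality $2k+1\le 3^k$ for integers $k\ge 0$ then gives $\|\phi_{\bsk}\|_\infty^2\le 3^{|\bsk|}\le 3^n$ for every $\bsk\in\Lambda^T_{n}$, so $L=3^n$ is admissible in \eqref{bos}; when $d\ge n$, choosing $\bsk$ to be the indicator of any $n$ coordinates shows $3^n$ is also the smallest admissible constant, which is the role of the stated hypothesis $d\ge n$.

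For part 2 I would first verify that the weight \eqref{eq:chebyshev-weights} is (up to the normalization convention for the basis) the ratio of the uniform density on $[-1,1]^d$ to the Chebyshev density $\nu$; a change-of-density computation based on \eqref{othorgonal} then shows that the preconditioned polynomials $\psi_{\bsk}(\mathbf{x}):=\sqrt{w(\mathbf{x})}\,\phi_{\bsk}(\mathbf{x})$ are orthonormal with respect to $\nu$. Since the $\mathbf{x}_i$ are i.i.d.\ $\nu$, Theorem \ref{RIP_bos} applies to $\tfrac{1}{\sqrt{M}}\bD=\tfrac{1}{\sqrt{M}}\sqrt{\bW}\bPsi$, with the diagonal entries of $\bW$ given by \eqref{eq:chebyshev-weights}; and $\bs{f}=\bPsi\widetilde{\bsc}$ forces $\sqrt{\bW}\bs{f}=\bD\widetilde{\bsc}$, so the constraint of \eqref{eq:weighted-l1minimization} is $\tfrac{1}{\sqrt{M}}\bD\bsc=\tfrac{1}{\sqrt{M}}\bD\widetilde{\bsc}$, again the hypothesis of Theorem \ref{thm:l1-l0-connection}. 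The only genuinely nontrivial ingredient is $L=\max_{\bsk\in\Lambda^T_{n}}\|\psi_{\bsk}\|_\infty^2$: by \eqref{gpcbais} this factorizes into a product over dimensions of $\sup_{x\in[-1,1]}|\sqrt{w^i(x)}\,\varphi_{k_i}(x)|^2$ with $w^i(x)\propto(1-x^2)^{1/2}$, and the crucial classical fact---precisely the one exploited in \cite{RauhutWard}---is that $\sup_{k\ge 0}\sup_{x\in[-1,1]}\left|(1-x^2)^{1/4}\varphi_k(x)\right|$ is bounded by an absolute constant independent of $k$ (a Bernstein/Szeg\H{o}-type bound for Jacobi polynomials). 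Raising this to the $d$-th power yields $L=O(2^d)$, uniformly in $n$ and $N$, and substituting into Theorem \ref{RIP_bos} and then Theorem \ref{thm:l1-l0-connection} delivers the claim.

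The main obstacle is exactly the degree-uniform boundedness of the Christoffel-weighted univariate Legendre polynomials underlying part 2. Once that one-dimensional estimate is granted, both cases reduce to tensorization, the elementary optimization $\max\prod(2k_i+1)=3^n$ over the total-degree simplex (part 1), and the routine task of matching the $\ell_1$ constraints to the hypotheses of the two cited recovery theorems while bookkeeping the constants.
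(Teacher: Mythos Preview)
The paper does not actually prove this theorem; it is quoted as background from \cite{XiuL1} and stated without proof. There is therefore no ``paper's own proof'' to compare against.

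That said, your proposal is correct and is essentially the argument of \cite{XiuL1}: both parts are obtained by verifying the bounded-orthonormal-system hypothesis \eqref{bos} with the appropriate measure, then invoking Theorem~\ref{RIP_bos} followed by Theorem~\ref{thm:l1-l0-connection}. Your computation for part~1 is exactly right, including the tensorization $\|\phi_{\bsk}\|_\infty^2=\prod_i(2k_i+1)$, the inequality $2k+1\le 3^k$, and the observation that $d\ge n$ makes $3^n$ sharp via $\bsk=(1,\ldots,1,0,\ldots,0)$. For part~2 your identification of $\sqrt{w}\,\phi_{\bsk}$ as a $\nu$-orthonormal system and the reduction to the univariate Szeg\H{o}-type bound $\sup_{k}\sup_{x}(1-x^2)^{1/4}|\varphi_k(x)|<\infty$ is also the standard route. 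The only place you are slightly loose is in writing $L=O(2^d)$: the theorem as stated needs the explicit constant $2$ per dimension, which in \cite{XiuL1} comes from tracking the precise numerical value in the weighted sup-norm bound (roughly $\sqrt{2/\pi}\cdot\sqrt{2k+1}/\sqrt{k}\to\sqrt{4/\pi}$, so the squared sup is bounded by $2$ after absorbing the normalization factor $(2/\pi)^{1/2}$ from \eqref{eq:chebyshev-weights}). If you want the statement with $2^d$ rather than $C^d$, you should make that constant explicit rather than hide it in the $O(\cdot)$.
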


\section{Random sampling of Gaussian quadrature points}\label{sec:gpc-stuff}
We now present the method of random Gauss quadrature for sparse polynomial recovery via $\ell_1$ minimization problem. The basic idea is to use the results from the previous section to conclude that subsampling a tensor-product Gaussian quadrature produces an accurate recovery procedure for CS. The two basic ingredients are (i) Tensor-product Gaussian quadrature can be used to define discrete measures under which polynomials are orthogonal, (ii) weighted polynomials under a Gaussian quadrature rule have quantifiable bounds.

\subsection{Tensor grid of Gaussian points}
Let $\varphi_{n}^i(x^i)$ be the degree-$n$ orthonormal polynomial corresponding to the density $\rho^i$. It is well known that $\varphi_{n}^i$ has $n$ real and distinct zeros. I.e., there are $n$ distinct nodes $z^i_k$, $k=1, \ldots, n$, such that
\begin{align}\label{eq:gq-nodes}
 \varphi_{n}^i(z_k^i)=0, \quad k=1,\cdot\cdot\cdot,n.
\end{align}
Furthermore, an interpolatory quadrature with weights $w_k$ rule can be constructed on the zeros, satisfying
\begin{align}\label{eq:gq-exactness}
  \sum\limits_{k=1}^{n}w_k^if(z_k^i) &= \int_{X^i}f(x^i)\rho^i(x^i)dx^i,
\end{align}
for any polynomial $f$ of degree $2 n - 1$ or less; this is the Gaussian quadrature rule.\footnote{Note that  $z^i_k$ depends on the value of $n$, but we omit explicit notation indicating this dependence.} Here $w_k^i, k=1,\ldots,n$, are the univariate Gauss quadrature weights associated with dimension $i$. The $n$-point Gauss quadrature weights can be computed explicitly as
\begin{align}\label{eq:quadratureweight}
 w_k^i = \lambda_n^i(z_k^i), \quad k=1,\ldots,n,
\end{align}
where $\lambda_n^i$ is the 2-norm Christoffel function associated with dimension $i$:
 \begin{align}\label{eq:quadraturelamda}
 \lambda_n^i(x^i)=\frac{1}{\sum\limits_{k=0}^{n-1}(\varphi_k^i(x^i))^2}.
\end{align}
We define a discrete probability measure defined on the support of the $n$-point Gauss quadrature nodes. With $\delta_z$ the Dirac measure centered at $z$,
\begin{align}\label{eq:marginal-gq-measure}
  \nu^i_n \triangleq \frac{1}{n} \sum_{k=1}^{n} \delta_{z^i_k}
\end{align}
The measure $\nu^i_n$ is the uniform empirical probability measure associated with the discrete set $\left\{z^i_1, \ldots, z^i_{n}\right\}$.

We proceed to tensorize the univariate Gaussian quadrature rules. Let
 \begin{align}\label{eq:points}
 \Theta_{n}^i = \{z_1^i,. . .,z_{n}^i\}\subset \Gamma^i, \quad i=1,...,d
\end{align}
be the one dimensional $n$-point Gauss set associated to the $i$'th dimension. We then take tensor products to construct a $d$-dimensional point set. Let $n_i$ for $i=1, \ldots, d$ denote the Gauss quadrature rule size for dimension $i$. We collect these sizes into the multi-index $\bs{n} = \left( n_1, \ldots, n_d\right) \in \N^d$. The tensor product set is then
\begin{align}\label{eq:tensorpoints}
  \Theta_{\bs{n}}=\Theta_{n_1}^1\otimes\cdot\cdot\cdot \otimes\Theta_{n_d}^d
\end{align}
The cardinality of this set is $\left|\Theta_{\bsn}\right| = \prod_{i=1}^d n_i$. As before, an ordering scheme can be employed to order the points via a single index, i.e. for each $j=1,...,\left|\Theta_{\bsn}\right|$,
\begin{align*}
  z_j \leftrightarrow \bsz_{\bs{k}} &= (z_{k_1}^1,...,z_{k_d}^d), & \bs{1} \leq \bs{k} &\leq \bs{n}
\end{align*}
Each point has the scalar weight
\begin{align}\label{eq:tp-weights}
  w_{\bs{k}} &= \prod_{i=1}^d w_{k_i}^i, & \bs{1} \leq \bs{k} &\leq \bs{n}
\end{align}
This tensorized Gauss quadrature in the $d$-dimensional space $\Gamma$ exactly integrates any polynomial in the tensor space $P_{2\bs{n}-1}$. We note that using (\ref{eq:quadratureweight}) and (\ref{eq:quadraturelamda}), the $d$-dimensional Gauss quadrature weights for $\Theta_{\bs{n}}$ are given by
\begin{align}\label{eq:quadratureweight2}
  w_{\bs{k}} = \lambda_{\bs{n}}\left(z_{\bs{k}}\right) \triangleq \prod\limits_{i=1}^d\lambda_{n_i}^i(z_{k_i}^i) = \prod\limits_{i=1}^d\frac{1}{\sum\limits_{k=0}^{n_i-1}[\varphi_k^i(z_{k_i}^i)]^2}
\end{align}
The uniform empirical probability measure on the set $\Theta_{\bs{n}}$ is given by
\begin{equation}\label{eq:gq-measure}
  \nu_{\bs{n}} = \bigotimes_{i=1}^d \nu_{n_i}^i = \frac{1}{\prod_{i=1}^d n_i} \sum_{\bs{k} \leq \bs{n}} \delta_{z_{\bs{k}}} =  \sum_{\bs{k} \leq \bs{n}} \prod_{i=1}^d \frac{1}{n_i} \delta_{z^i_{k_i}}
\end{equation}
Note that iid sampling from $\nu_{\bs{n}}$ is equivalent to sampling uniformly from a tensor-product Gauss quadrature grid.

We will use the notation $\E_{\nu^i_n}$ and $\E_{\nu_{\bsn}}$ to denote expectations under the measures defined in \eqref{eq:marginal-gq-measure} and \eqref{eq:gq-measure}, respectively. I.e.,
\begin{align*}
  \E_{\nu^i_n} f\left(X^i\right) &\triangleq \frac{1}{n} \sum_{k=1}^n f\left(z^i_k\right), &
  \E_{\nu_{\bsn}} f\left(\bsX\right) &\triangleq \frac{1}{\prod_{i=1}^d n_i} \sum_{\bs{1} \leq \bsk \leq \bsn} f\left({\bsz}_{\bsk}\right)
\end{align*}

\subsection{Orthogonal matrices from Gaussian quadrature}

The defining property of the univariate Gauss quadrature rule allows one to conclude that weighted Vandermonde-like matrices formed on the quadrature nodes are orthogonal matrices. Later, we will need the notion of an orthogonal rectangular matrix.
\begin{definition}\label{def:rectangular-orthogonal-matrix}
  An $M \times N$ matrix $\bs{D}$ with $M \geq N$ is \textit{orthogonal} if $\bs{D}^T \bs{D} = \bs{I}_{N \times N}$.
\end{definition}
With $\varphi_k^i$ the degree-$k$ polynomial from the univariate $\rho^i$-orthonormal family, consider the $n_i \times n_i$ Vandermonde-like matrix $\boldsymbol{\Psi^i}$ with entries
\begin{align}\label{eq:psimat-def}
  \left(\Psi^i\right)_{j,k} &= \varphi^i_{k-1}\left(z^i_j\right), & j,k &= 1, \ldots, n_i
\end{align}
where $z^i_j$ are the nodes of the $n_i$-point Gaussian quadrature rule introduced in \eqref{eq:gq-nodes}. We also need a diagonal matrix $\bs{\Sigma^i}$ containing the quadrature weights:
\begin{align}\label{eq:sigmamat-def}
  \left(\Sigma^i\right)_{j,k} &= w^i_j \delta_{j,k}, & j,k &= 1, \ldots, n_i,
\end{align}
with $\delta_{j,k}$ the Kronecker delta. The degree of exactness of each Gaussian quadrature rule implies
\begin{align*}
  \delta_{j,k} = \int_{\Gamma^i} \varphi_{j-1}\left(s\right) \varphi_{k-1}\left(s\right) \rho^i(s) \dx{s} = \sum_{k=1}^{n_i} \varphi_{j-1}\left(z^i_k\right) \varphi_{k-1}\left(z^i_k\right) w^i_k = \left(\bs{\Psi^i}^T \bs{\Sigma^i} \bs{\Psi_i} \right)_{j,k}
\end{align*}
for $j,k = 1, \ldots, n_i$. The fact that the Kronecker delta is equal to the $(j,k)$ element of the matrix $\bs{\Psi^i}^T \bs{\Sigma^i} \bs{\Psi^i}$ indicates that this matrix product is the identity matrix. So we have proven:
\begin{subtheorem}{lemma}\label{lemma:D-orthogonality}
\begin{lemma}[\cite{gautschi_condition_1983}]\label{lemma:D-orthogonal-matrix}
  The $n_i \times n_i$ matrix
  \begin{align}\label{eq:D-univariate}
    \bs{D^i} = \left(\bs{\Sigma^i}\right)^{1/2} \bs{\Psi^i},
  \end{align}
  defined by \eqref{eq:psimat-def} and \eqref{eq:sigmamat-def} is an orthogonal matrix.
\end{lemma}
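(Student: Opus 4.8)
The plan is to verify directly the defining relation $(\bs{D^i})^T \bs{D^i} = \bs{I}_{n_i \times n_i}$ required by Definition \ref{def:rectangular-orthogonal-matrix} in the square case $M = N = n_i$, following the computation already displayed just before the statement. First I would observe that the Gauss quadrature weights $w^i_k$ are strictly positive: by the Christoffel-function representation \eqref{eq:quadratureweight}--\eqref{eq:quadraturelamda}, each $w^i_k = \lambda^i_{n_i}(z^i_k)$ is the reciprocal of a sum of squares of polynomials and hence is positive. Therefore $\bs{\Sigma^i}$ is a diagonal matrix with positive diagonal entries, its square root $(\bs{\Sigma^i})^{1/2}$ is a well-defined real diagonal matrix with entries $\sqrt{w^i_k}$, and $(\bs{\Sigma^i})^{1/2}(\bs{\Sigma^i})^{1/2} = \bs{\Sigma^i}$.

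Next I would compute
\begin{align*}
  (\bs{D^i})^T \bs{D^i} = \left(\bs{\Psi^i}\right)^T (\bs{\Sigma^i})^{1/2}(\bs{\Sigma^i})^{1/2} \bs{\Psi^i} = \left(\bs{\Psi^i}\right)^T \bs{\Sigma^i} \bs{\Psi^i},
\end{align*}
whose $(j,k)$ entry, for $1 \leq j,k \leq n_i$, is $\sum_{\ell=1}^{n_i} w^i_\ell\, \varphi^i_{j-1}(z^i_\ell)\, \varphi^i_{k-1}(z^i_\ell)$. The key step is to recognize the product $\varphi^i_{j-1}\varphi^i_{k-1}$ as a polynomial of degree $(j-1)+(k-1) \leq 2n_i - 2 \leq 2n_i - 1$, so that the exactness property \eqref{eq:gq-exactness} of the $n_i$-point Gauss rule applies and rewrites the sum as $\int_{\Gamma^i} \varphi^i_{j-1}(s)\varphi^i_{k-1}(s)\rho^i(s)\,ds$. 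By the orthonormality relation \eqref{eq:phi-orthonormality} this integral equals $\delta_{j-1,k-1} = \delta_{j,k}$. Hence $(\bs{D^i})^T\bs{D^i}$ has $(j,k)$ entry $\delta_{j,k}$, i.e. it is $\bs{I}_{n_i \times n_i}$, and $\bs{D^i}$ is orthogonal in the sense of Definition \ref{def:rectangular-orthogonal-matrix}.

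There is no serious obstacle here: the statement is essentially a repackaging of Gauss quadrature exactness together with orthonormality of the $\varphi^i_k$. The only point worth a moment of care is confirming that none of the relevant products $\varphi^i_{j-1}\varphi^i_{k-1}$ exceeds degree $2n_i - 1$; the extremal case $j = k = n_i$ produces degree $2n_i - 2$, which lies safely inside the exactness range, so no quadrature error is incurred. (One could alternatively appeal directly to \cite{gautschi_condition_1983}, but writing out this one-line argument keeps the proof self-contained.)
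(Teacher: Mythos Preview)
Your proof is correct and follows exactly the approach the paper uses: the computation $(\bs{D^i})^T\bs{D^i} = (\bs{\Psi^i})^T\bs{\Sigma^i}\bs{\Psi^i}$ together with Gauss quadrature exactness on the degree-$(\leq 2n_i-2)$ products $\varphi^i_{j-1}\varphi^i_{k-1}$ and orthonormality \eqref{eq:phi-orthonormality} is precisely the argument displayed just before the lemma statement. Your added remarks on positivity of the weights and the explicit degree check are helpful clarifications but do not change the route.
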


A straightforward consequence of this is that the $\left(\prod_{i=1}^d n_i\right) \times \left(\prod_{i=1}^d n_i\right)$ matrix that is the tensor product of the $d$ univariate matrices $\bs{D^i}$ is also an orthogonal matrix:
\begin{lemma}[\cite{TangIa}]
  The $\left(\prod_{i=1}^d n_i\right) \times \left(\prod_{i=1}^d n_i\right)$ matrix,
  \begin{align}\label{eq:D-square}
    \bs{D} = \bigotimes_{i=1}^d \bs{D^i},
  \end{align}
  is an orthogonal matrix.
\end{lemma}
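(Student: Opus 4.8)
The plan is to reduce the orthogonality of the tensor-product matrix $\bs{D} = \bigotimes_{i=1}^d \bs{D^i}$ to the orthogonality of each univariate factor $\bs{D^i}$, which is already established in Lemma \ref{lemma:D-orthogonal-matrix}. The key algebraic fact I would invoke is the standard mixed-product property of the Kronecker product: for matrices of compatible dimensions, $(A \otimes B)(C \otimes D) = (AC) \otimes (BD)$, and the analogous statement for transposes, $(A \otimes B)^T = A^T \otimes B^T$. These identities extend by induction to a Kronecker product of any finite number of factors.

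First I would write $\bs{D}^T \bs{D} = \left(\bigotimes_{i=1}^d \bs{D^i}\right)^T \left(\bigotimes_{i=1}^d \bs{D^i}\right)$. Applying the transpose rule factor-by-factor gives $\bs{D}^T = \bigotimes_{i=1}^d \left(\bs{D^i}\right)^T$. Then applying the mixed-product property across the $d$ factors yields
\begin{align*}
  \bs{D}^T \bs{D} = \bigotimes_{i=1}^d \left( \left(\bs{D^i}\right)^T \bs{D^i} \right).
\end{align*}
By Lemma \ref{lemma:D-orthogonal-matrix}, each factor $\left(\bs{D^i}\right)^T \bs{D^i}$ equals the $n_i \times n_i$ identity matrix $\bs{I}_{n_i}$. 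Since $\bigotimes_{i=1}^d \bs{I}_{n_i} = \bs{I}_{\prod_i n_i}$, we conclude $\bs{D}^T \bs{D} = \bs{I}$, which is exactly the definition of an orthogonal (square, hence rectangular with $M = N$) matrix in Definition \ref{def:rectangular-orthogonal-matrix}. One small bookkeeping point is to confirm the dimensions line up: each $\bs{D^i}$ is $n_i \times n_i$, so the Kronecker product is $\left(\prod_i n_i\right) \times \left(\prod_i n_i\right)$, matching the claimed size.

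Honestly, there is no real obstacle here — the result is essentially a one-line consequence of Lemma \ref{lemma:D-orthogonal-matrix} together with the functorial behavior of the Kronecker product with respect to matrix multiplication and transposition. The only thing worth being careful about is that one should verify the mixed-product identity genuinely applies (the inner dimensions do match, since every $\bs{D^i}$ is square), and that the ordering convention used to flatten the tensor-product point set $\Theta_{\bsn}$ into a single index is consistent with the ordering implicitly used to index the rows and columns of $\bigotimes_{i=1}^d \bs{D^i}$; any fixed consistent convention works, since orthogonality is invariant under the simultaneous row/column permutation induced by a change of flattening order. I would state these remarks briefly and then conclude.
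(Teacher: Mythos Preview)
Your proposal is correct and matches the paper's approach: the paper does not give a detailed proof but simply states the lemma as ``a straightforward consequence'' of the univariate orthogonality in Lemma~\ref{lemma:D-orthogonal-matrix}, which is exactly what you make precise via the mixed-product and transpose identities for the Kronecker product.
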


In \cite{TangIa}, the matrix $\bs{D}$ was called a ``discrete orthogonal matrix" (DOM). Note that the same properties hold if $\bs{D}$ has more rows than columns. (I.e., the marginal quadrature rule order dominates the marginal polynomial degree of the basis.)
\begin{lemma}
  Let $\bsm, \bsn \in \N^d$ satisfy $\bsm \geq \bsn$.
  For each $i$, define the $m_i \times n_i$ matrix $\bs{\Psi^i}$ with entries
  \begin{align*}
    \left(\Psi^i\right)_{q,k} &= \varphi_{k-1}\left(z^i_q\right), & k &= 1, \ldots, n_i, \hskip 10pt q = 1, \ldots, m_i.
  \end{align*}
  With $\bs{\Sigma^i}$ the $m_i \times m_i$ diagonal weighting matrix given in \eqref{eq:sigmamat-def}, define the $m_i \times n_i$ matrix,
  \begin{align*}
    \bs{D^i} = \left(\bs{\Sigma^i}\right)^{1/2} \bs{\Psi^i}.
  \end{align*}
  Then the $\left(\prod_{i=1}^d m_i\right) \times \left(\prod_{i=1}^d n_i \right)$ matrix
  \begin{align}\label{eq:D-rectangular}
    \bs{D} = \bigotimes_{i=1}^d \bs{D^i}
  \end{align}
  is an orthogonal matrix.
\end{lemma}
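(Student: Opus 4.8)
The plan is to follow the proof of Lemma~\ref{lemma:D-orthogonal-matrix} essentially verbatim: first check that each rectangular univariate factor $\bs{D^i}$ is orthogonal in the sense of Definition~\ref{def:rectangular-orthogonal-matrix}, and then propagate this through the Kronecker product exactly as in the square case \eqref{eq:D-square}.

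First I would fix $i$ and compute, for $1 \leq j,k \leq n_i$, the $(j,k)$ entry of $\left(\bs{D^i}\right)^T \bs{D^i}$. Using $\bs{D^i} = (\bs{\Sigma^i})^{1/2}\bs{\Psi^i}$ and the diagonality of $\bs{\Sigma^i}$, this entry equals
\begin{align*}
  \left( \left(\bs{D^i}\right)^T \bs{D^i} \right)_{j,k} = \sum_{q=1}^{m_i} w^i_q\, \varphi^i_{j-1}\!\left(z^i_q\right) \varphi^i_{k-1}\!\left(z^i_q\right),
\end{align*}
a weighted sum over the nodes and weights of the $m_i$-point Gauss rule for $\rho^i$. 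The key observation is that since $j-1 \leq n_i - 1$ and $k-1 \leq n_i - 1$, the polynomial $\varphi^i_{j-1}\varphi^i_{k-1}$ has degree at most $2n_i - 2$, and the hypothesis $\bsm \geq \bsn$ (i.e. $m_i \geq n_i$ componentwise) gives $2n_i - 2 \leq 2m_i - 1$, so the $m_i$-point rule integrates it exactly by \eqref{eq:gq-exactness}. Invoking the orthonormality relation \eqref{eq:phi-orthonormality} then yields
\begin{align*}
  \left( \left(\bs{D^i}\right)^T \bs{D^i} \right)_{j,k} = \int_{\Gamma^i} \varphi^i_{j-1}(s)\varphi^i_{k-1}(s)\rho^i(s)\,ds = \delta_{j,k},
\end{align*}
so $\left(\bs{D^i}\right)^T \bs{D^i} = \bs{I}_{n_i \times n_i}$, i.e. $\bs{D^i}$ is an $m_i \times n_i$ orthogonal matrix.

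Next I would pass to the tensor product. Combining $(\bs{A} \otimes \bs{B})^T = \bs{A}^T \otimes \bs{B}^T$ with the mixed-product rule $(\bs{A} \otimes \bs{B})(\bs{C} \otimes \bs{E}) = (\bs{A}\bs{C}) \otimes (\bs{B}\bs{E})$ and iterating over the $d$ factors gives
\begin{align*}
  \bs{D}^T \bs{D} = \left( \bigotimes_{i=1}^d \bs{D^i} \right)^{T} \left( \bigotimes_{i=1}^d \bs{D^i} \right) = \bigotimes_{i=1}^d \left( \left(\bs{D^i}\right)^T \bs{D^i} \right) = \bigotimes_{i=1}^d \bs{I}_{n_i \times n_i} = \bs{I},
\end{align*}
the identity matrix of size $\prod_{i=1}^d n_i$; since $\prod_{i=1}^d m_i \geq \prod_{i=1}^d n_i$, this is precisely the assertion that $\bs{D}$ is a rectangular orthogonal matrix.

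I do not expect any genuine obstacle: this is a routine strengthening of Lemma~\ref{lemma:D-orthogonal-matrix} and of the square tensor case. The only point requiring care is the degree count in the first step — one must confirm that products of the basis polynomials used to build $\bs{D^i}$ have degree low enough to be integrated exactly by the \emph{larger} ($m_i$-point) Gauss rule, which is exactly what $\bsm \geq \bsn$ guarantees (equality $m_i = n_i$ already suffices) — together with the purely formal handling of the non-square Kronecker product in the second step.
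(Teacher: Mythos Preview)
Your proposal is correct and follows essentially the same approach as the paper: the paper states this lemma without explicit proof, presenting it as the evident rectangular extension of Lemma~\ref{lemma:D-orthogonal-matrix} (Gauss exactness gives univariate orthogonality) and the square tensor case \eqref{eq:D-square} (Kronecker products of orthogonal matrices are orthogonal), which is precisely the two-step argument you wrote out.
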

The most general version of this statement is that any matrix $\bs{D}$ whose columns are a subset of the appropriate tensor-product space is orthogonal.
\begin{lemma}
  Let $\Lambda \in \N_0^d$ be a finite multi-index set, and let $\bs{n}$ be any multi-index satisfying
  \begin{align*}
    \Lambda \subseteq \Lambda_{\bs{n-1}}^P.
  \end{align*}
  With $N = |\Lambda|$, let $\bs{k}(1), \ldots, \bs{k}(N)$ represent any enumeration of the elements of $\Lambda$. With $M = \prod_{i=1}^d n_i$, let $\bsm(1), \ldots, \bsm(M)$ denote any enumeration of the elements in $\left\{ \bsm \;\; | \;\; \bs{1} \leq \bsm \leq \bsn + \bs{1}\right\}$. Then the $M \times N$ matrix $\bs{D}$ with entries
  \begin{align*}
    (D)_{j,\ell} &= \sqrt{w_{\bsm(j)}} \phi_{\bsk(\ell)}\left(\bsz_{\bsm(j)}\right), & 1 \leq j \leq M,\;\; & 1 \leq \ell \leq N
  \end{align*}
  is an orthogonal matrix.
\end{lemma}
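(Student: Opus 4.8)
The plan is to verify the defining identity $\bs{D}^T \bs{D} = \bs{I}_{N\times N}$ of Definition \ref{def:rectangular-orthogonal-matrix} directly, by recognizing the entries of $\bs{D}^T\bs{D}$ as tensorized Gauss-quadrature approximations of $L^2(\rho)$ inner products of gPC basis elements, and then invoking exactness of the quadrature rule together with the orthonormality relation \eqref{othorgonal}. A preliminary observation handles the shape requirement $M \geq N$: since $\Lambda \subseteq \Lambda^P_{\bs{n-1}}$ and $\#\Lambda^P_{\bs{n-1}} = \prod_{i=1}^d n_i = M$, we get $N = |\Lambda| \leq M$.

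The core computation is as follows. Fix $1 \leq \ell, \ell' \leq N$ and write $\bsk = \bsk(\ell)$, $\bsk' = \bsk(\ell')$. Then
\begin{align*}
  \left(\bs{D}^T \bs{D}\right)_{\ell,\ell'} = \sum_{j=1}^M (D)_{j,\ell}\,(D)_{j,\ell'} = \sum_{\bs{1} \leq \bsm \leq \bsn} w_{\bsm}\, \phi_{\bsk}\left(\bsz_{\bsm}\right) \phi_{\bsk'}\left(\bsz_{\bsm}\right),
\end{align*}
where the second equality uses that $\bsm(1),\ldots,\bsm(M)$ is an enumeration of the full tensor-grid index set, so the reordering of the sum is harmless. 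The right-hand side is precisely the tensorized Gauss rule $\Theta_{\bsn}$ applied to $g := \phi_{\bsk}\phi_{\bsk'}$. The key step is the degree bookkeeping: since $\bsk,\bsk' \leq \bsn - \bs{1}$ componentwise, the coordinate-$i$ degree of $g$ is $k_i + k'_i \leq 2n_i - 2 \leq 2n_i - 1$, so $g \in P_{2\bsn-\bs{1}}$. By the exactness of the tensorized Gauss quadrature on $P_{2\bsn-\bs{1}}$ established when $\Theta_{\bsn}$ was constructed, the sum equals $\int_{\Gamma} \phi_{\bsk}(\bsx)\phi_{\bsk'}(\bsx)\rho(\bsx)\,\dx{\bsx}$, which by \eqref{othorgonal} is $\delta_{\bsk,\bsk'}$. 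Because $\ell \mapsto \bsk(\ell)$ is a bijection onto $\Lambda$, we have $\delta_{\bsk(\ell),\bsk(\ell')} = \delta_{\ell,\ell'}$, hence $\bs{D}^T\bs{D} = \bs{I}_{N\times N}$, as claimed.

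I do not expect a genuine obstacle; the only point requiring care is the degree count around the shift $\bsn-\bs{1}$, i.e. confirming that an $n_i$-point Gauss rule in dimension $i$ integrates products of the retained basis functions exactly. An even shorter alternative is to note that this $\bs{D}$ is exactly the submatrix obtained by keeping the columns indexed by $\Lambda$ of the square discrete orthogonal matrix $\bigotimes_{i=1}^d (\bs{\Sigma^i})^{1/2}\bs{\Psi^i}$ from the preceding lemma (take the quadrature sizes equal to $\bsn$ and the full column index set $\Lambda^P_{\bs{n-1}}$); deleting columns from a matrix with orthonormal columns leaves a matrix with orthonormal columns, which is the assertion. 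I would likely present the direct argument as the main proof and mention the submatrix viewpoint as a remark.
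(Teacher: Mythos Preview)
Your proof is correct and matches the paper's reasoning. The paper does not write out a separate proof for this lemma; it is presented as the natural generalization of the preceding parts of Lemma~\ref{lemma:D-orthogonality}, all of which rest on exactly the mechanism you use (Gauss-quadrature exactness plus the orthonormality relation~\eqref{othorgonal}), and your ``submatrix of an orthogonal matrix'' remark is precisely how the paper positions this statement relative to the square case~\eqref{eq:D-square}.
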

\end{subtheorem}

That $\bs{D}$ is an orthogonal matrix is interesting because it indicates that it is a well-conditioned matrix. If, in addition, the mass of the matrix is equidistributed across all its entries, then Theorem \ref{RIP_bos} implies that it has a small RIP constant. Thus, if equidistribution holds, then it might be possible to use its rows as a discrete candidate set to subsample for a compressive sampling strategy. This was explored in \cite{TangIa} when $\bs{X}$ is a uniform random variable. Here we extend these results to more general cases. We delay the analysis and converence results until Section \ref{sec:analysis}.

\subsection{Compressive sampling via Gaussian quadrature subsampling}\label{sec:gpc-stuff:algorithm}

A simple algorithm can now be presented for compressive sampling recovery of a random function $f(\bs{X})$: we can subsample $M$ rows from the matrix $\bs{D}$ in the previous section and use them to perform sparse recovery. A more detailed algorithm is as follows:
\begin{enumerate}
  \item Given an index set $\Lambda$, find $\bs{n} \in \N^d$ such that $\Lambda \subseteq \Lambda^P_{\bs{n} - \bs{1}}$.
  \item Generate the $n_i$-point Gauss quadrature rule $\left\{z^i_k, w^i_k\right\}_{k=1}^{n_i}$ for $i=1, \ldots, d$. The resulting full tensor-product rule is $\Theta_{\bs{n}} = \left\{\left(\bs{z}_{\bs{k}}, w_{\bs{k}}\right)\right\}_{\bs{1} \leq \bs{k} \leq \bs{n}}$, but it need not be constructed explicitly.
  \item According to the uniform probability law, randomly choose (and construct) $M$ points $\left\{ \left(\bs{y}_m, v_m \right)\right\}_{m=1}^M \subset \Theta_{\bsn}$.
  \item Generate an $M$-row matrix $\bs{D}$ from these $M$ points and index set $\Lambda$, having entries
    \begin{align}\label{eq:general-D-def}
      (D)_{m,n} &= \sqrt{v_m} \phi_{\bsk(n)}\left(\bs{y}_m\right),
    \end{align}
    where $\bsk(1), \ldots, \bsk(N)$ is an enumeration of the elements in $\Lambda$. Also form the $M \times M$ diagonal matrix $\bs{W}$ with entries $(W)_{m,m} = v_m$, and collect the $M$ evaluations of the function $f(\bs{y}_m)$ into the vector $\bs{f}$.
  \item Solve \eqref{eq:weighted-l1minimization} for the coefficients $\bs{c}$.
\end{enumerate}
This procedure amounts to subsampling the product Gaussian quadrature rule in order to perform compressive sampling. One remaining question is how large $M$ should be so that we can guarantee recovery. The authors in \cite{TangIa} show that if $\bsX$ is a uniform random variable, then this strategy requires $M \gtrsim 3^d s$ samples to recover an approximately $s$-sparse vector. We provide a similar analysis for more general random variables by using analysis presented in the next section. Numerical results from the above algorithm are shown in Section \ref{sec:results} for various probability densities for $\bs{X}$.

\section{Analysis of Gaussian quadrature subsampling}\label{sec:analysis}

Here we present analysis of the algorithm shown in Section \ref{sec:gpc-stuff:algorithm}. The essential question is how many samples $M$ are required so that we can guarantee a faithful recovery of some function $f$. We proceed to show this by analyzing the entries of the weighted Vandermonde-like matrix $\bs{D}$. The entries of this matrix can be viewed as non-polynomial functions orthornormal under the discrete measure $\nu_{\bs{n}}$ defined in \eqref{eq:marginal-gq-measure}. Using properties of orthogonal polynomials, we can determine the maximum magnitude of these functions, allowing us to use Theorem \ref{RIP_bos} to determine a sufficient number of samples $M$. Our main result is stated in Theorem \ref{thm:main-sample-count}.

\subsection{Discrete orthonormal systems}

The polynomials $\varphi^i_k$ are orthonormal under the orthogonalizing measure $\rho^i$. In this section we show that specially weighted versions of $\varphi^i_k$ are orthonormal under the discrete measures $\nu^i_n$, defined in \eqref{eq:marginal-gq-measure}. We show similar statements for the multivariate polynomials $\phi_{\bs{k}}$ under the measure $\nu_{\bs{n}}$. These results stem from the accuracy of the Gauss quadrature rules introduced in the previous section: The four parts of Lemma \ref{lemma:general-psi-orthonormality} presented below are restatements of the four parts of Lemma \ref{lemma:D-orthogonality}. We show brief proofs for the first two parts.

\begin{subtheorem}{lemma}\label{lemma:general-psi-orthonormality}
\begin{lemma}\label{lemma:psi-univariate-orthonormality}
  Let $n \in \N$ be fixed. Then the $n$ functions
  \begin{align}\label{eq:psi-univariate-def}
    \left\{ \psi^i_{k,n}\left(x^i\right) \right\}_{k=0}^{n-1} \triangleq \left\{ \sqrt{n \lambda^i_n \left(x^i\right)} \varphi_k^i\left(x^i\right) \right\}_{k=0}^{n-1}
  \end{align}
  are orthonormal under the probability measure $\nu^i_n$ defined in \eqref{eq:marginal-gq-measure}.
\end{lemma}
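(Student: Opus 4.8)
The plan is to verify the orthonormality relation directly using the Gauss quadrature exactness property \eqref{eq:gq-exactness}. I want to show that for $0 \leq k, \ell \leq n-1$,
\begin{align*}
  \E_{\nu^i_n} \left[ \psi^i_{k,n}(X^i) \psi^i_{\ell,n}(X^i) \right] = \delta_{k,\ell}.
\end{align*}
First I would expand the left-hand side using the definition \eqref{eq:psi-univariate-def} of $\psi^i_{k,n}$ and the definition of $\E_{\nu^i_n}$ as an average over the Gauss nodes $z^i_1, \ldots, z^i_n$:
\begin{align*}
  \E_{\nu^i_n} \left[ \psi^i_{k,n}(X^i) \psi^i_{\ell,n}(X^i) \right] = \frac{1}{n} \sum_{j=1}^n n \lambda^i_n(z^i_j) \varphi^i_k(z^i_j) \varphi^i_\ell(z^i_j) = \sum_{j=1}^n \lambda^i_n(z^i_j) \varphi^i_k(z^i_j) \varphi^i_\ell(z^i_j).
\end{align*}
The factor of $n$ from the weight $\sqrt{n \lambda^i_n}$ cancels the $\frac{1}{n}$ from the averaging measure, which is exactly why the normalization in \eqref{eq:psi-univariate-def} is chosen as it is.

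Next I would recall from \eqref{eq:quadratureweight} that $w^i_j = \lambda^i_n(z^i_j)$, so the sum becomes $\sum_{j=1}^n w^i_j \varphi^i_k(z^i_j) \varphi^i_\ell(z^i_j)$. Since $\varphi^i_k$ and $\varphi^i_\ell$ each have degree at most $n-1$, their product $\varphi^i_k \varphi^i_\ell$ is a polynomial of degree at most $2n - 2 \leq 2n - 1$, so the $n$-point Gauss quadrature rule \eqref{eq:gq-exactness} integrates it exactly:
\begin{align*}
  \sum_{j=1}^n w^i_j \varphi^i_k(z^i_j) \varphi^i_\ell(z^i_j) = \int_{\Gamma^i} \varphi^i_k(s) \varphi^i_\ell(s) \rho^i(s)\, ds = \delta_{k,\ell},
\end{align*}
where the last equality is the orthonormality \eqref{eq:phi-orthonormality} of the $\varphi^i_k$ under $\rho^i$. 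This establishes the claim. (Equivalently, one can observe that this is just a restatement of Lemma \ref{lemma:D-orthogonal-matrix}: the entries of the orthogonal matrix $\bs{D^i} = (\bs{\Sigma^i})^{1/2} \bs{\Psi^i}$ are $(D^i)_{j,k} = \sqrt{w^i_j}\,\varphi^i_{k-1}(z^i_j) = \frac{1}{\sqrt{n}} \psi^i_{k-1,n}(z^i_j)$, and the columns of $\bs{D^i}$ being orthonormal is precisely the discrete orthonormality of the $\psi^i_{k,n}$.)

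I do not anticipate a genuine obstacle here, since the statement is essentially a rephrasing of Lemma \ref{lemma:D-orthogonal-matrix} in the language of functions orthonormal under a discrete measure; the only thing to be careful about is the degree count ($2n-2$ versus the exactness threshold $2n-1$) and tracking the normalization constants so the $n$ and $1/n$ factors cancel correctly. The reason this reformulation is worth stating separately is that the subsequent analysis (bounding $\|\psi^i_{k,n}\|_\infty$ to invoke Theorem \ref{RIP_bos}) is most naturally phrased in terms of the functions $\psi^i_{k,n}$ and the sampling measure $\nu^i_n$ rather than the matrix $\bs{D^i}$.
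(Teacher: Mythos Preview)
Your proof is correct and follows essentially the same approach as the paper's: expand the discrete expectation, cancel the $n$ against $1/n$, identify $\lambda^i_n(z^i_j)=w^i_j$, invoke Gauss quadrature exactness, and conclude via the $\rho^i$-orthonormality of the $\varphi^i_k$. Your explicit remark on the degree count $2n-2\leq 2n-1$ and the connection to Lemma~\ref{lemma:D-orthogonal-matrix} match the paper's presentation.
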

\begin{proof}
  The result follows by direct calculation and use of the exactness of the Gauss quadrature rule, and is essentially equivalent to Lemma \ref{lemma:D-orthogonal-matrix}. For $j, k < n$ we have:
  \begin{align*}
    \E_{\nu^i_n} \left[ \psi^i_{k,n}\left(X^i\right) \psi^i_{j,n}\left(X^i\right) \right] &\stackrel{\eqref{eq:marginal-gq-measure}}{=} \frac{1}{n} \sum_{p=0}^{n-1} \psi^i_{j,n}\left(z^i_p\right) \psi^i_{k,n}\left(z^i_p\right) \\
                                                                                  &\stackrel{\eqref{eq:psi-univariate-def}}{=} \frac{1}{n} \sum_{p=0}^{n-1} n \lambda^i_n\left(z^i_p\right) \varphi_j^i\left(z^i_p\right) \varphi_k^i\left(z^i_p\right) \\
                                                                                  &\stackrel{\eqref{eq:quadratureweight}}{=} \sum_{p=0}^{n-1} w^i_p \varphi_j^i\left(z^i_p\right) \varphi_k^i\left(z^i_p\right) \\
                                                                                  &\stackrel{\eqref{eq:gq-exactness}}{=} \int_{\Gamma_i} \varphi^i_j\left(x^i\right) \varphi^i_k\left(x^i\right) \rho^i\left(x^i\right) \dx{x^i} \\
                                                                                  &\stackrel{\eqref{eq:phi-orthonormality}}{=} \delta_{j,k}.
  \end{align*}
\end{proof}
Note the factor of $\sqrt{n}$ in \eqref{eq:psi-univariate-def}, and the fact that $\psi^i_{k,n}$ depends on $n$. The result above generalizes in a straightforward manner to the multivariate case.
\begin{lemma}\label{lemma:psi-orthonormality}
  Let $\bs{n} \in \N^d$ be fixed. Then the $\prod_{i=1}^d n_i$ functions
  \begin{align}\label{eq:psi-def}
    \left\{ \psi_{\bs{k},\bs{n}}\left(z\right) \right\}_{\bsk \in \Lambda^P_{\bs{n}-\bs{1}}} \triangleq \left\{\prod_{i=1}^d \psi^i_{k_i, n_i}\left(z^i\right)\right\}_{\bsk \in \Lambda^P_{\bs{n}-\bs{1}}}
  \end{align}
  are orthonormal under the probability measure $\nu_{\bs{n}}$ defined in \eqref{eq:gq-measure}.
\end{lemma}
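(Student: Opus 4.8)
The plan is to reduce the multivariate statement to the univariate Lemma~\ref{lemma:psi-univariate-orthonormality} by exploiting the fact that both the candidate functions $\psi_{\bsk,\bsn}$ and the measure $\nu_{\bsn}$ are tensor products across the $d$ coordinates. Concretely, fix $\bsk, \bsj \in \Lambda^P_{\bsn - \bs{1}}$ and evaluate the discrete expectation $\E_{\nu_{\bsn}}[\psi_{\bsk,\bsn}(\bsX)\psi_{\bsj,\bsn}(\bsX)]$ directly. Using the product form of $\nu_{\bsn}$ in~\eqref{eq:gq-measure}, the sum over the grid index $\bs{1}\le\bsm\le\bsn$ separates into an iterated sum $\prod_{i=1}^d \frac{1}{n_i}\sum_{m_i=1}^{n_i}$; at the same time the integrand factors as $\psi_{\bsk,\bsn}(\bsz_{\bsm})\psi_{\bsj,\bsn}(\bsz_{\bsm}) = \prod_{i=1}^d \psi^i_{k_i,n_i}(z^i_{m_i})\psi^i_{j_i,n_i}(z^i_{m_i})$ by definition~\eqref{eq:psi-def}. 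Pulling the product outside the (now separable) sum gives
\begin{align*}
  \E_{\nu_{\bsn}}\left[ \psi_{\bsk,\bsn}(\bsX) \psi_{\bsj,\bsn}(\bsX) \right]
  &= \frac{1}{\prod_{i=1}^d n_i} \sum_{\bs{1} \le \bsm \le \bsn} \; \prod_{i=1}^d \psi^i_{k_i,n_i}\left(z^i_{m_i}\right) \psi^i_{j_i,n_i}\left(z^i_{m_i}\right) \\
  &= \prod_{i=1}^d \left[ \frac{1}{n_i} \sum_{m_i=1}^{n_i} \psi^i_{k_i,n_i}\left(z^i_{m_i}\right) \psi^i_{j_i,n_i}\left(z^i_{m_i}\right) \right] \\
  &= \prod_{i=1}^d \delta_{k_i,j_i} = \delta_{\bsk,\bsj}.
\end{align*}

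In the last line I invoke Lemma~\ref{lemma:psi-univariate-orthonormality} in each dimension: its hypothesis $k_i, j_i < n_i$ is exactly the constraint $\bsk,\bsj \in \Lambda^P_{\bsn-\bs{1}}$, so each bracketed factor equals $\E_{\nu^i_{n_i}}[\psi^i_{k_i,n_i}(X^i)\psi^i_{j_i,n_i}(X^i)] = \delta_{k_i,j_i}$. Since $\delta_{\bsk,\bsj} = \prod_{i=1}^d\delta_{k_i,j_i}$ by the componentwise definition of the multi-index Kronecker delta (cf.\ the line after~\eqref{othorgonal}), this is precisely orthonormality of the family. The family has $\#\Lambda^P_{\bsn-\bs{1}} = \prod_{i=1}^d n_i$ elements, matching the stated count, and this is the multivariate analogue of the tensor-product discrete-orthogonal-matrix statements collected in Lemma~\ref{lemma:D-orthogonality}.

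I do not expect a genuine obstacle here: the whole argument is pure tensorization of the already-proved one-dimensional identity, which is why the paper promises only a ``brief proof.'' The one place that deserves a line of care is the bookkeeping --- that the index range $\bs{1}\le\bsm\le\bsn$ in~\eqref{eq:gq-measure} decouples cleanly into the per-dimension ranges $1 \le m_i \le n_i$, and that the degree budget $\Lambda^P_{\bsn-\bs{1}}$ is exactly what legitimizes each coordinatewise application of Lemma~\ref{lemma:psi-univariate-orthonormality} --- but both points are immediate from the definitions~\eqref{eq:gq-measure} and~\eqref{eq:psi-def}. Accordingly the proof should be the single display above together with that remark.
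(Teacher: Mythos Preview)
Your proof is correct and takes essentially the same approach as the paper: both reduce the multivariate expectation to a product of the univariate ones via the tensor-product structure of $\nu_{\bsn}$ and $\psi_{\bsk,\bsn}$, then invoke Lemma~\ref{lemma:psi-univariate-orthonormality} coordinatewise. The only cosmetic difference is that the paper phrases the factorization as independence of the components $Z^i$ under $\nu_{\bsn}$, whereas you write out the iterated sum explicitly.
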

\begin{proof}
  We again use a direct calculation. For $\bs{0} \leq \bs{j}, \bs{k} < \bs{n}$ we have:
  \begin{align*}
    \E_{\nu_{\bs{n}}} \left[ \psi_{\bs{j}}\left(\bsX\right) \psi_{\bs{k}}\left(\bsX\right) \right] &= \E_{\nu^1_{n_1} \otimes \cdots \otimes \nu^d_{n_d}} \left[ \prod_{i=1}^d \psi^i_{j_i}\left(Z^i\right) \psi^i_{k_i} \left( Z^i\right) \right]  \\
                                                                                             &= \prod_{i=1}^d \E_{\nu^i_{n_i}} \left[ \prod_{i=1}^d \psi^i_{j_i}\left(Z^i\right) \psi^i_{k_i} \left( Z^i\right) \right] \\
                                                                                             &\stackrel{\textrm{Lemma \ref{lemma:psi-univariate-orthonormality}}}{=} \prod_{i=1}^d \delta_{j_i, k_i} = \delta_{\bs{j}, \bs{k}},
  \end{align*}
  In the second equality above we have used the fact that a random variable associated to the measure $\nu_{\bs{n}}$ has independent components $Z^i$ with marginal distributions given by $\nu^i_{n_i}$.
\end{proof}
Of course, one can use a quadrature rule of higher-degree accuracy than required and still retain orthogonality of the resulting matrix.
\begin{lemma}
  Let $\bsm, \bsn \in \N^d$ satisfy $\bsm \geq \bsn$. The $\prod_{i=1}^d n_i$ functions
  \begin{align*}
    \left\{ \psi_{\bs{k},\bs{m}}\left(z\right) \right\}_{\bsk \in \Lambda^P_{\bsn - \bs{1}}},
  \end{align*}
  are orthonormal under the probability measure $\nu_{\bsm}$.
\end{lemma}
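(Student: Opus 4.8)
The plan is to obtain this statement as a verbatim repetition of the proofs of Lemma~\ref{lemma:psi-univariate-orthonormality} and Lemma~\ref{lemma:psi-orthonormality}, with $\bsn$ replaced by $\bsm$ in the discrete measure while keeping $\bsn - \bs{1}$ as the range of basis indices. The first thing I would note is the degree bookkeeping that makes this work: if $\bsk \in \Lambda^P_{\bsn - \bs{1}}$ then $k_i \leq n_i - 1$, and since $\bsm \geq \bsn$ this gives $k_i \leq n_i - 1 \leq m_i - 1$ in every coordinate. In particular, for $j_i, k_i < n_i$ the product $\varphi^i_{j_i}\varphi^i_{k_i}$ is a polynomial of degree $j_i + k_i \leq 2(n_i - 1) < 2 m_i - 1$, which lies within the exactness range of the $m_i$-point Gauss quadrature rule underlying $\nu^i_{m_i}$.

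Given that observation, the univariate step is immediate: applying Lemma~\ref{lemma:psi-univariate-orthonormality} with the fixed parameter $n := m_i$ shows that $\{\psi^i_{k, m_i}\}_{k=0}^{m_i - 1}$ is orthonormal under $\nu^i_{m_i}$, and restricting to the subset of indices $0 \leq k \leq n_i - 1$ preserves orthonormality. (Equivalently, one can just rerun the one-line chain of equalities in the proof of Lemma~\ref{lemma:psi-univariate-orthonormality}: the exactness of the $m_i$-point rule on degree-$\leq 2(n_i-1)$ polynomials collapses the discrete inner product to the continuous one, which is $\delta_{j_i, k_i}$ by \eqref{eq:phi-orthonormality}.) For the multivariate claim I would then tensorize exactly as in the proof of Lemma~\ref{lemma:psi-orthonormality}, using that a random vector with law $\nu_{\bsm}$ has independent coordinates $Z^i$ with marginals $\nu^i_{m_i}$ (this is \eqref{eq:gq-measure}) together with the product definition \eqref{eq:psi-def} of $\psi_{\bsk,\bsm}$:
\begin{align*}
  \E_{\nu_{\bsm}}\!\left[ \psi_{\bsj, \bsm}(\bsX)\, \psi_{\bsk, \bsm}(\bsX) \right]
  &= \prod_{i=1}^d \E_{\nu^i_{m_i}}\!\left[ \psi^i_{j_i, m_i}(Z^i)\, \psi^i_{k_i, m_i}(Z^i) \right] \\
  &= \prod_{i=1}^d \delta_{j_i, k_i} = \delta_{\bsj, \bsk},
\end{align*}
for all $\bsj, \bsk \in \Lambda^P_{\bsn - \bs{1}}$.

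There is no genuine analytic obstacle here; the only point requiring any care is the degree count in the first paragraph, namely that raising the quadrature order from $\bsn$ to $\bsm$ does not disturb exactness on the span of the relevant degree-$\leq 2(n_i - 1)$ polynomials. Once that is in place the argument is mechanical, which is presumably why the authors state it without proof.
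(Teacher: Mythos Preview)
Your proposal is correct and follows exactly the pattern the paper establishes for the first two parts of Lemma~\ref{lemma:general-psi-orthonormality}; indeed, as you observe, the paper states this part without proof (``We show brief proofs for the first two parts''), and your argument---apply Lemma~\ref{lemma:psi-orthonormality} with $\bsn$ replaced by $\bsm$ and then restrict to the sub-index set $\Lambda^P_{\bsn-\bs{1}} \subseteq \Lambda^P_{\bsm-\bs{1}}$---is precisely the intended one-line justification.
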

Finally, any non-tensor-product polynomial space of finite dimension can be encapsulated in a tensor-product space, and the quadrature rule associated to the tensor-product space exactly integrates elements from the original space.
\begin{lemma}
  Given a finite index set $\Lambda$, let $\bs{n} \in \N^d$ be such that $\Lambda \subseteq \Lambda_{\bsn - \bs{1}}^P$. With $N = |\Lambda|$, then the $N$ functions
  \begin{align*}
    \left\{ \psi_{\bs{k},\bs{n}}\left(z\right) \right\}_{\bsk \in \Lambda},
  \end{align*}
  are orthonormal under the probability measure $\nu_{\bsn}$.
\end{lemma}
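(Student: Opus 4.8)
The plan is to deduce this statement immediately from Lemma~\ref{lemma:psi-orthonormality} by a subset argument; no new estimate is needed. Since $\Lambda$ is a finite multi-index set with $\Lambda \subseteq \Lambda^P_{\bsn - \bs{1}}$, every $\bsk \in \Lambda$ satisfies $\bsk \leq \bsn - \bs{1}$, so the function $\psi_{\bsk,\bsn}$ defined in \eqref{eq:psi-def} is well-defined and is one of the $\prod_{i=1}^d n_i$ functions in the family $\{\psi_{\bsk,\bsn}\}_{\bsk \in \Lambda^P_{\bsn-\bs{1}}}$. Lemma~\ref{lemma:psi-orthonormality} asserts that this larger family is orthonormal under $\nu_{\bsn}$, i.e. $\E_{\nu_{\bsn}}[\psi_{\bsj,\bsn}(\bsX)\,\psi_{\bsk,\bsn}(\bsX)] = \delta_{\bsj,\bsk}$ for all $\bsj,\bsk \leq \bsn - \bs{1}$.

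First I would observe that $\{\psi_{\bsk,\bsn}\}_{\bsk \in \Lambda}$ is, as a set of functions indexed by the distinct elements of $\Lambda$, simply the restriction of the family from Lemma~\ref{lemma:psi-orthonormality} to the index subset $\Lambda$. Then for any two $\bsj, \bsk \in \Lambda$ the inner product $\E_{\nu_{\bsn}}[\psi_{\bsj,\bsn}(\bsX)\,\psi_{\bsk,\bsn}(\bsX)]$ equals $\delta_{\bsj,\bsk}$ by the cited lemma, which is precisely the orthonormality claim for the sub-family. Because the $N = |\Lambda|$ indices are distinct, the $N$ functions are distinct and their Gram matrix under $\nu_{\bsn}$ is the $N \times N$ identity.

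There is essentially no obstacle here; the only point worth making explicit — and the reason this statement is recorded separately — is that passing from a tensor-product index set to an arbitrary finite $\Lambda$ requires only the containment $\Lambda \subseteq \Lambda^P_{\bsn - \bs{1}}$. Equivalently, for $\bsj,\bsk \in \Lambda$ the product $\phi_{\bsj}\phi_{\bsk}$ lies in $P_{2\bsn - \bs{2}} \subseteq P_{2\bsn - \bs{1}}$ and is therefore integrated exactly by the $\bsn$-point tensor Gauss rule, so the discrete inner product against $\nu_{\bsn}$ coincides with the continuous $L^2_{\rho}$ inner product, which is $\delta_{\bsj,\bsk}$ by \eqref{othorgonal}. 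This containment is exactly what is arranged in step~1 of the algorithm of Section~\ref{sec:gpc-stuff:algorithm}, so the lemma is a one-line consequence of Lemma~\ref{lemma:psi-orthonormality} (alternatively, of the orthogonality of the rectangular matrix $\bs{D}$ in the last part of Lemma~\ref{lemma:D-orthogonality}).
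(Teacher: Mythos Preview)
Your proposal is correct and matches the paper's treatment: the paper does not write out a separate proof for this part, noting instead that the four parts of Lemma~\ref{lemma:general-psi-orthonormality} are restatements of the four parts of Lemma~\ref{lemma:D-orthogonality} and that the argument follows the pattern of Lemmas~\ref{lemma:psi-univariate-orthonormality} and~\ref{lemma:psi-orthonormality}. Your subset argument from Lemma~\ref{lemma:psi-orthonormality} (and the equivalent exactness-of-quadrature justification you give) is exactly the intended one-line deduction.
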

\end{subtheorem}

\subsection{Bounded discrete orthonormal systems}
We have established in the previous section that the collection of functions $\left\{\psi_{\bs{k}, \bs{n}} \right\}_{\bs{k} \in \Lambda}$ are $\nu_{\bs{n}}$-orthonormal for any $\Lambda \subseteq \Lambda^P_{\bs{n}-\bs{1}}$. The recovery result in Theorem \ref{RIP_bos} then guarantees RIP properties associated to iid sampling strategies based on sup-norm bounds of these functions, to which we now turn.

Consider the scalar random variable $X^i$ with $n \in \N$ fixed. The $\psi^i_{k,n}$ are $\nu^i_n$-orthonormal, and so we must determine the bound
\begin{align*}
  L_i(n) \triangleq \max_{0 \leq k \leq n-1} \sup_{x^i \in \supp \nu^i_n} \left|\psi_{k,n}\left(x^i\right)\right|^2 = \max_{0 \leq k \leq n-1} \max_{1 \leq j \leq n} \left|\psi^i_{k,n}\left(z^i_j\right)\right|^2,
\end{align*}
where $z^i_j$ are the $n$-point Gaussian quadrature nodes. The $\psi_{k,n}$ functions are polynomials weighted by a Christoffel function. Much is known about the behavior of these functions, and can be used to estimate the bound $L$. The following lemmas establish the behavior of $L_i(n)$ for general distributions of $X^i$. The proofs of all the below Lemmas are in the Appendix.

When the polynomial family $\varphi_k$ that defines the weighted functions $\psi_{k,n}$ corresponds to almost any Jacobi polynomial family, then the bounding constant $L_i(n)$ is \textit{independent} of $n$, and in this case the $\psi_{k,n}$ are a uniformly bounded system for $k \leq n$.
\begin{subtheorem}{lemma}\label{lemma:bounds}
\begin{lemma}\label{lemma:jacobi-bound}
  Let $X^i \sim B\left(\gamma+1, \delta+1\right)$ be a univariate Beta-distributed random variable with shape parameters $\gamma, \delta \geq -\frac{1}{2}$ on the domain $X^i = [-1,1]$. Thus, the polynomials $\varphi^i_k$ are Jacobi polynomials with parameters $\delta, \gamma$. Then
    \begin{align*}
      L_i(n) \leq C
    \end{align*}
    The constant $C$ is uniform in $n$, $C = C(\gamma, \delta)$.
\end{lemma}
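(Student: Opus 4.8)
The plan is to reduce the statement to two classical pointwise estimates — one for Jacobi polynomials, one for their Christoffel functions — and then observe that the two estimates cancel. First, unwinding the definitions \eqref{eq:psi-univariate-def}, \eqref{eq:quadratureweight} and \eqref{eq:quadraturelamda}, at a Gauss node $z^i_j$ one has
\[
  \left|\psi^i_{k,n}\!\left(z^i_j\right)\right|^2 = n\,\lambda^i_n\!\left(z^i_j\right)\left(\varphi^i_k\!\left(z^i_j\right)\right)^2 = n\, w^i_j\left(\varphi^i_k\!\left(z^i_j\right)\right)^2 ,
\]
so $L_i(n) = \max_{0 \le k \le n-1}\max_{1 \le j \le n} n\, w^i_j\left(\varphi^i_k(z^i_j)\right)^2$. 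Exactness of the Gauss rule gives $\sum_{j=1}^n w^i_j(\varphi^i_k(z^i_j))^2 = 1$ for $k \le n-1$, hence the trivial bound $L_i(n) \le n$; the content of the lemma is that the endpoint growth of $\varphi^i_k$ near $\pm 1$ is cancelled exactly by the smallness of the Gauss weights there.

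Write $x = \cos\theta$, $\theta\in[0,\pi]$, and recall $\varphi^i_k$ is the orthonormal Jacobi polynomial of degree $k$ with parameters $\alpha = \delta \ge -\tfrac12$, $\beta = \gamma \ge -\tfrac12$. I would invoke two standard facts. First, the classical pointwise bound (Szeg\H{o}/Nevai): there is $C_1 = C_1(\gamma,\delta)$ with
\[
  \left(\varphi^i_k(\cos\theta)\right)^2 \le C_1 \left(\theta + \tfrac1k\right)^{-2\delta-1}\left(\pi - \theta + \tfrac1k\right)^{-2\gamma-1}, \qquad k \ge 1 .
\]
Second, the Christoffel-function (equivalently Gauss-weight) bound: there is $C_2 = C_2(\gamma,\delta)$ with
\[
  \lambda^i_n(\cos\theta) \le \frac{C_2}{n}\left(\theta + \tfrac1n\right)^{2\delta+1}\left(\pi - \theta + \tfrac1n\right)^{2\gamma+1}, \qquad \theta \in [0,\pi] .
\]
Granting these, for $1 \le k \le n-1$ and any node $z^i_j = \cos\theta_j$ the powers of $(\theta_j + \cdot)$ and $(\pi-\theta_j+\cdot)$ combine and
\[
  \left|\psi^i_{k,n}(z^i_j)\right|^2 \le C_1 C_2 \left(\frac{\theta_j + 1/n}{\theta_j + 1/k}\right)^{2\delta+1}\left(\frac{\pi-\theta_j+1/n}{\pi-\theta_j+1/k}\right)^{2\gamma+1} \le C_1 C_2 ,
\]
since $k \le n-1 < n$ forces $1/n < 1/k$, so both ratios are $\le 1$, while $2\delta+1 \ge 0$ and $2\gamma+1 \ge 0$. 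The degenerate case $k=0$ (where $\varphi^i_0$ is constant) follows from the second bound alone together with $0 \le \theta_j \le \pi$. Hence $L_i(n) \le C(\gamma,\delta)$ with no dependence on $n$.

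The reduction and the final algebra are routine; the substance, and the only delicate step, is establishing the two displayed estimates uniformly in the degree $k$. This requires handling three regimes — the endpoint layer $\theta \lesssim 1/k$, where $\varphi^i_k(1) \asymp k^{\delta+1/2}$ and Mehler--Heine/Bessel asymptotics apply; the bulk $1/k \lesssim \theta \lesssim \pi - 1/k$, governed by Szeg\H{o}'s trigonometric asymptotics; and the symmetric layer near $\theta = \pi$ — and joining them across the transition points, which is precisely what the $\tfrac1k$- and $\tfrac1n$-regularizations encode. These are standard in the orthogonal-polynomial literature (Szeg\H{o}; Nevai; or the M\'at\'e--Nevai--Totik Christoffel-function asymptotics), so in the appendix I would cite them rather than reprove them.
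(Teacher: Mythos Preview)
Your argument is correct, and the underlying mathematics is the same as the paper's; the difference is one of packaging. The paper's appendix proof is two lines: it notes that the Gauss--Jacobi nodes lie in $[-1,1]$ (Szeg\H{o}), then quotes directly from \cite{jakeman_2015} the bound
\[
  \sup_{z \in [-1,1]} \left|\psi^i_{k,n}(z)\right|^2 \le C(\gamma,\delta), \qquad 0 \le k \le n-1,
\]
and is done. You instead unpack that black box: you write $|\psi_{k,n}|^2 = n\lambda_n^i \cdot (\varphi_k^i)^2$ and invoke the matched pair of classical estimates --- the Szeg\H{o}/Nevai pointwise bound for $(\varphi_k^i)^2$ and the M\'at\'e--Nevai--Totik bound for $\lambda_n^i$ --- so that the endpoint weights $(\theta+1/k)^{\pm(2\delta+1)}$ and $(\pi-\theta+1/k)^{\pm(2\gamma+1)}$ cancel, with the monotonicity $1/n<1/k$ and the sign condition $2\gamma+1,\,2\delta+1\ge 0$ handling the mismatch in degrees. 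This is almost certainly what \cite{jakeman_2015} does internally, so your route is not genuinely different, just more transparent and self-contained; the paper's version trades that transparency for brevity by outsourcing the estimate.
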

We show in the left-hand plot of Figure \ref{fig:univariate-sup-bounds} the evolution of $L_i(n)$ as a function of the quadrature node size $n$, along with its dependence on symmetric parameters $\gamma = \delta$.

The strict $n$-independent bound obtained above for Jacobi polynomial families is, unfortunately, not true for polynomials orthogonal with respect to exponential weights on unbounded domains. Instead, the bound depends on the degree $n$. However, the dependence is relatively mild.
\begin{lemma}\label{lemma:full-exponential-bound}
  Let $X^i$ be a random variable whose density $\rho^i$ is exponential on the real line:
  \begin{align*}
    \rho^i(x) &\propto \exp(-|x|^\alpha), & x &\in \R
  \end{align*}
  for some $\alpha > \frac{3}{2}$. Then the weighted polynomials $\psi^i_{k,n}$ satisfy
    \begin{align*}
      L_i(n) \leq C n^{2/3},
    \end{align*}
    where the constant $C$ is uniform in $n$, i.e., $C = C(\alpha)$.
\end{lemma}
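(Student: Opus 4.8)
The plan is to reduce the desired bound to two classical facts about orthogonal polynomials with Freud (exponential) weights and then to combine them. Write $\rho^i(x) = Z_\alpha^{-1}\exp(-|x|^\alpha)$ and let $a_n$ denote the Mhaskar--Rakhmanov--Saff number associated with this weight; for $\rho^i$ one has $a_n \asymp n^{1/\alpha}$. First I would rewrite the quantity to be estimated. Since $\psi^i_{k,n}(x)^2 = n\,\lambda^i_n(x)\,\varphi^i_k(x)^2$ and the quadrature nodes $z^i_j$ are precisely the zeros of $\varphi^i_n$, I would factor $\lambda^i_n = (\lambda^i_n/\rho^i)\cdot\rho^i$ to obtain
\begin{align*}
  L_i(n) = \max_{0 \le k \le n-1}\ \max_{1 \le j \le n} n\,\frac{\lambda^i_n(z^i_j)}{\rho^i(z^i_j)}\;\rho^i(z^i_j)\,\varphi^i_k(z^i_j)^2 \le n\Bigl(\max_{1 \le j \le n}\frac{\lambda^i_n(z^i_j)}{\rho^i(z^i_j)}\Bigr)\Bigl(\max_{0 \le k \le n-1}\bigl\|\sqrt{\rho^i}\,\varphi^i_k\bigr\|_{L^\infty(\mathbb{R})}^2\Bigr),
\end{align*}
so that it suffices to control the Christoffel ratio at the nodes and the weighted sup-norms of the $\varphi^i_k$ for $k < n$ separately.

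For the first factor I would invoke the standard upper bound for Christoffel functions of Freud weights (Levin--Lubinsky; Nevai): $\lambda^i_n(x) \le C\,(a_n/n)\,\rho^i(x)$ for all $x$, and in particular at every zero $z^i_j$ of $\varphi^i_n$ (all of which lie in $[-Ca_n, Ca_n]$), whence $\max_j \lambda^i_n(z^i_j)/\rho^i(z^i_j) \le C\,a_n/n \asymp n^{1/\alpha - 1}$. For the second factor I would use the sharp $L^\infty$ estimate for Freud orthonormal polynomials, $\bigl\|\sqrt{\rho^i}\,\varphi^i_k\bigr\|_{L^\infty(\mathbb{R})} \le C\,a_k^{-1/2}\,k^{1/6} \asymp k^{1/6 - 1/(2\alpha)}$ for $k \ge 1$ (the $k=0$ term being a constant), so that $\max_{0\le k\le n-1}\bigl\|\sqrt{\rho^i}\,\varphi^i_k\bigr\|_\infty^2 \le C\max\{1,\,n^{1/3 - 1/\alpha}\}$. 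Inserting these two bounds into the display above yields
\begin{align*}
  L_i(n) \le C\,n\cdot n^{1/\alpha - 1}\cdot\max\{1,\,n^{1/3-1/\alpha}\} = C\max\{n^{1/\alpha},\,n^{1/3}\},
\end{align*}
and since $\alpha > \tfrac32$ gives $1/\alpha < \tfrac23$, both exponents are at most $\tfrac23$, so $L_i(n) \le C n^{2/3}$ with $C = C(\alpha)$. In fact the argument yields the slightly stronger $L_i(n) \le C\,n^{\max\{1/3,\,1/\alpha\}}$.

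The algebraic bookkeeping above is routine; the content lies entirely in the two cited ingredients, and the main obstacle is to pin down exactly which statements of (i) the Christoffel-function upper bound and (ii) the weighted sup-norm asymptotic $\|\sqrt{\rho}\,\varphi_k\|_\infty \asymp a_k^{-1/2}k^{1/6}$ are available for the weight $\exp(-|x|^\alpha)$, and to verify its membership in the relevant regularity class of Freud-type weights (which requires $\alpha > 1$). This is also precisely where the hypothesis $\alpha > \tfrac32$ is consumed: it is equivalent to $a_n = O(n^{2/3})$, which is what keeps the $a_n/n \asymp n^{1/\alpha-1}$ contribution of the Christoffel factor from overwhelming the target rate. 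A minor point to record along the way is the classical fact that the zeros of the degree-$n$ Freud orthonormal polynomial lie in an interval $[-Ca_n, Ca_n]$, so that the pointwise Christoffel bound may legitimately be applied at the quadrature nodes.
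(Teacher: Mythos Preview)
Your argument is correct and in fact yields a sharper conclusion than the paper's. The two proofs differ in structure. The paper's proof is a two-line citation: it records that the Gauss nodes lie in $[-\widehat a_n,\widehat a_n]$ with $\widehat a_n=a_n(1+cn^{-2/3})$ and then invokes a black-box estimate from \cite{jakeman_2015} of the form $\sup_{|z|\le \widehat a_n}|\psi_{k,n}(z)|^2\le C(\alpha)n^{2/3}$, taking the supremum over the full interval rather than only over the nodes. Your route instead factors $|\psi_{k,n}|^2=n\,(\lambda_n/\rho)\cdot(\rho\,\varphi_k^2)$ and bounds the two pieces separately via the Levin--Lubinsky Christoffel upper bound and the Freud sup-norm $\|W\varphi_k\|_\infty\asymp a_k^{-1/2}k^{1/6}$. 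This is more transparent about where the hypothesis $\alpha>\tfrac32$ enters (exactly the condition $a_n=O(n^{2/3})$), and it actually produces the stronger estimate $L_i(n)\le C\,n^{\max\{1/3,\,1/\alpha\}}$; for Hermite ($\alpha=2$) this is $n^{1/2}$, not $n^{2/3}$. The paper's $n^{2/3}$ is therefore the worst case over the whole family $\alpha>\tfrac32$ rather than the rate for each fixed $\alpha$.

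One small inaccuracy to fix: the Christoffel bound $\lambda_n(x)\le C(a_n/n)\rho(x)$ does \emph{not} hold for all $x\in\mathbb{R}$, since $\lambda_n(x)$ decays only polynomially at infinity while $\rho(x)$ decays exponentially. The correct domain is $|x|\le a_n$ (or $|x|\le a_n(1+cn^{-2/3})$), which is precisely what Levin--Lubinsky give and is all you need, since the Gauss nodes satisfy $|z_j|<a_n$ with $a_n-\max_j|z_j|\asymp a_n n^{-2/3}$. Your final paragraph already records this localization of the zeros, so the fix is purely a matter of wording.
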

Note that the above case covers $\alpha = 2$, corresponding to a normally-distributed $X^i$ and a gPC basis of Hermite polynomials. The one-sided exponential bound is similar; we state it separately because its proof in the Appendix requires a different set of results.
\begin{lemma}\label{lemma:half-exponential-bound}
  Let $X^i$ be a random variable whose density $\rho^i$ is exponential on the half real line:
  \begin{align*}
    \rho^i(x) &\propto \exp(-|x|^\alpha), & x &\geq 0
  \end{align*}
  for some $\alpha > \frac{3}{4}$. Then the weighted polynomials $\psi^i_{k,n}$ satisfy
    \begin{align*}
      L_i(n) \leq C n^{2/3},
    \end{align*}
    where the constant $C$ is uniform in $n$, i.e., $C = C(\alpha)$.
\end{lemma}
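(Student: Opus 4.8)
The plan is to control $L_i(n)=\max_{0\le k\le n-1}\max_{1\le j\le n}|\psi^i_{k,n}(z^i_j)|^2$ by a pointwise bound. Writing $w:=\rho^i$ and $p_m(x):=\sqrt{w(x)}\,\varphi^i_m(x)$ for the weighted orthonormal polynomials, and using $\psi^i_{k,n}(x)=\sqrt{n\lambda^i_n(x)}\,\varphi^i_k(x)$ together with $\lambda^i_n(x)^{-1}=\sum_{m=0}^{n-1}\varphi^i_m(x)^2$, one obtains
\[
  \bigl|\psi^i_{k,n}(x)\bigr|^2=\frac{n\,p_k(x)^2}{\widetilde K_n(x,x)},\qquad
  \widetilde K_n(x,x):=\sum_{m=0}^{n-1}p_m(x)^2=\frac{w(x)}{\lambda^i_n(x)} .
\]
So it suffices to show $n\,p_k(x)^2\le C\,n^{2/3}\,\widetilde K_n(x,x)$ for every $0\le k\le n-1$ and every $x\ge 0$; indeed only $0\le x\le a_n$ matters, where $a_n\asymp n^{1/\alpha}$ is the Mhaskar--Rakhmanov--Saff number of $\rho^i$ (all Gauss nodes lie there, and $p_k$ is super-exponentially small past $a_k$). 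The trivial bound $p_k(x)^2\le\widetilde K_n(x,x)$ only gives $L_i(n)\le n$, so finer information about the family must be used.

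That finer information is supplied by the asymptotic theory of orthonormal polynomials for exponential weights on the half line $[0,\infty)$ -- the ``different set of results'' the statement alludes to -- developed by Levin and Lubinsky (and, for $\alpha=1$, classical for Laguerre polynomials). From it one extracts, with all constants depending only on $\alpha$: (i) a uniform envelope for $p_k$ showing that $p_k(x)^2$ is of order $a_k^{-1}$ in the deep bulk, has a local maximum of order $a_k^{-1}k^{1/3}$ in the soft-turning-point (Airy) window $|x-a_k|\lesssim a_k k^{-2/3}$, is governed by the Bessel (hard-edge) regime near $x=0$ (where it can be as large as $\asymp k/a_k$), and is super-exponentially small for $x\gg a_k$; and (ii) a lower bound for the reproducing kernel, $\widetilde K_n(x,x)\gtrsim c\,\mu_n(x)$, in terms of (a suitable cap of) the equilibrium density $\mu_n$, which is of order $n/a_n$ in the bulk, of order $n^{2/3}/a_n$ throughout the soft-edge window, and blows up like $n/\sqrt{a_n x}$ as $x\downarrow 0$. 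A useful supplement is that near a soft turning point there are $\asymp k^{1/3}$ consecutive indices $m$ with $p_m(x)^2$ comparable to the local maximum, which pins $\widetilde K_n(x,x)\asymp n^{2/3}/a_n$ on the soft-edge window $x\approx a_n$.

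The estimate is then assembled by splitting $[0,a_n]$ into a hard-edge window near $0$, a bulk, and a soft-edge window near $a_n$, and maximizing $n\,p_k(x)^2/\widetilde K_n(x,x)$ over $0\le k\le n-1$ in each. On the soft-edge window the bound $n^{2/3}$ is attained and is sharp: for $k\asymp n$ and $x\approx a_n$ the numerator is $\asymp n\cdot a_n^{-1}n^{1/3}$ while the denominator is $\asymp a_n^{-1}n^{2/3}$. In the bulk, $p_k(x)$ is near its local maximum only when $x\approx a_k$ (otherwise $p_k(x)^2$ is much smaller, or exponentially small when $x\gg a_k$), and there $n\,p_k(x)^2/\widetilde K_n(x,x)$ works out to $\asymp n^{1/(2\alpha)}\,k^{\,1/3-1/(2\alpha)}$, whose maximum over $1\le k\le n$ is $\le n^{\max\{1/(2\alpha),\,1/3\}}$; this is $\le n^{2/3}$ precisely because $\alpha>\tfrac34$. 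The extreme case is a bounded degree $k$ at a bounded $x$ -- right next to the hard edge on the $a_n$-scale -- where $\widetilde K_n(x,x)\asymp n/\sqrt{a_n}\asymp n^{1-1/(2\alpha)}$ and hence $n\,p_k(x)^2/\widetilde K_n(x,x)\asymp n^{1/(2\alpha)}$, which is $\le n^{2/3}$ exactly when $\alpha\ge\tfrac34$ (with equality at $\alpha=\tfrac34$, consistent with the sharpness of the exponent). The same threshold is visible through the substitution $x=t^2$, under which $\rho^i$ becomes the full-line Freud-type weight $|t|\,e^{-|t|^{2\alpha}}$ (with the $\varphi^i_k$ its even orthonormal polynomials up to normalization) and $2\alpha>\tfrac32$ is inherited from the full-line regime of Lemma~\ref{lemma:full-exponential-bound}.

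The main obstacle is the hard-edge window: one must extract from the Bessel-type local behavior at $x=0$ two-sided estimates for $p_k$ and for $\widetilde K_n(\cdot,\cdot)$ near $0$ that are sharp and uniform in $k\le n$ and $n$, and then verify that their product stays $\le C n^{2/3}$ once $\alpha>\tfrac34$. This is exactly where the one-sided analysis departs from the two-sided (full-line) case of Lemma~\ref{lemma:full-exponential-bound}, which has no hard edge. The remaining work -- matching estimates across the three $x$-regimes and checking that all constants depend only on $\alpha$ -- is routine bookkeeping.
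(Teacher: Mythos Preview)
Your proposal is correct in outline but takes a substantially different route from the paper. The paper does not carry out any region-by-region asymptotic analysis at all: it simply (i) quotes a result of Levin--Lubinsky localizing the Gauss nodes to $[0,\widehat a^H_n]$ with $\widehat a^H_n=a^H_n(1+cn^{-2/3})$, and (ii) quotes a ready-made sup-norm bound (from \cite{jakeman_2015}, which itself repackages \cite{levin_orthogonal_2005,levin_orthogonal_2006}) of the form $\sup_{x\in[0,\widehat a^H_n]}|\psi^i_{k,n}(x)|^2\le C(\alpha)n^{2/3}$ valid for all $0\le k\le n-1$. Combining these two citations gives $L_i(n)\le C n^{2/3}$ in one line, exactly as in the Jacobi case.

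Your approach instead attempts to \emph{derive} the cited bound from first principles, via the identity $|\psi^i_{k,n}|^2=n\,p_k^2/\widetilde K_n$ and a hard-edge/bulk/soft-edge decomposition with Bessel, Ullman-density, and Airy estimates for $p_k$ and $\widetilde K_n$. This buys you an explanation of \emph{why} the exponent is $2/3$ (the soft-edge Airy window for $k\asymp n$) and \emph{why} the threshold is $\alpha>\tfrac34$ (the hard-edge contribution $n^{1/(2\alpha)}$ for bounded $k$ at bounded $x$, equivalently the full-line condition $2\alpha>\tfrac32$ under $x=t^2$), neither of which the paper's proof makes visible. The cost is that you must assemble uniform two-sided estimates on $p_k$ and $\widetilde K_n$ across three regimes---you flag the hard edge as the delicate part, and it is: the Levin--Lubinsky results you invoke are stated for weights of the form $x^{\mu}e^{-|x|^\alpha}$ and one has to check carefully that the $\mu=0$ case yields the Bessel-type lower bound on $\widetilde K_n$ you need. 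One small correction: the Gauss nodes are only known to lie in $[0,\widehat a^H_n]$, not $[0,a_n]$, so your soft-edge window must extend slightly past $a_n$; this does not change the $n^{2/3}$ conclusion but should be said.
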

\end{subtheorem}
\begin{remark}
  We expect the conclusion of Lemma \ref{lemma:half-exponential-bound} to be valid for the more general weight $\rho^i \propto x^{\mu} \exp(-|x|^\alpha)$ for some $\mu \geq -\frac{1}{2}$. This would require some estimates on zeros of the associated orthogonal polynomials along with behavior of the associated Christoffel function. These estimates are essentially present in \cite{levin_orthogonal_2005,levin_orthogonal_2006}. Since this would necessitate a more technical analysis with dubious payoff, we do not pursue this here.
\end{remark}
Of special note is the result of Lemma \ref{lemma:half-exponential-bound} with $\alpha = 1$, corresponding to an exponential random variable $X^i$ and a gPC basis of Laguerre polynomials.

The behavior of the bounds established above are shown in the right-hand plot of Figure \ref{fig:univariate-sup-bounds} for Hermite polynomials (Lemma \ref{lemma:full-exponential-bound} with $\alpha = 2$) and Laguerre polynomials (Lemma \ref{lemma:half-exponential-bound} with $\alpha = 1$).
\begin{figure}
  \begin{center}
    \includegraphics[width=\textwidth]{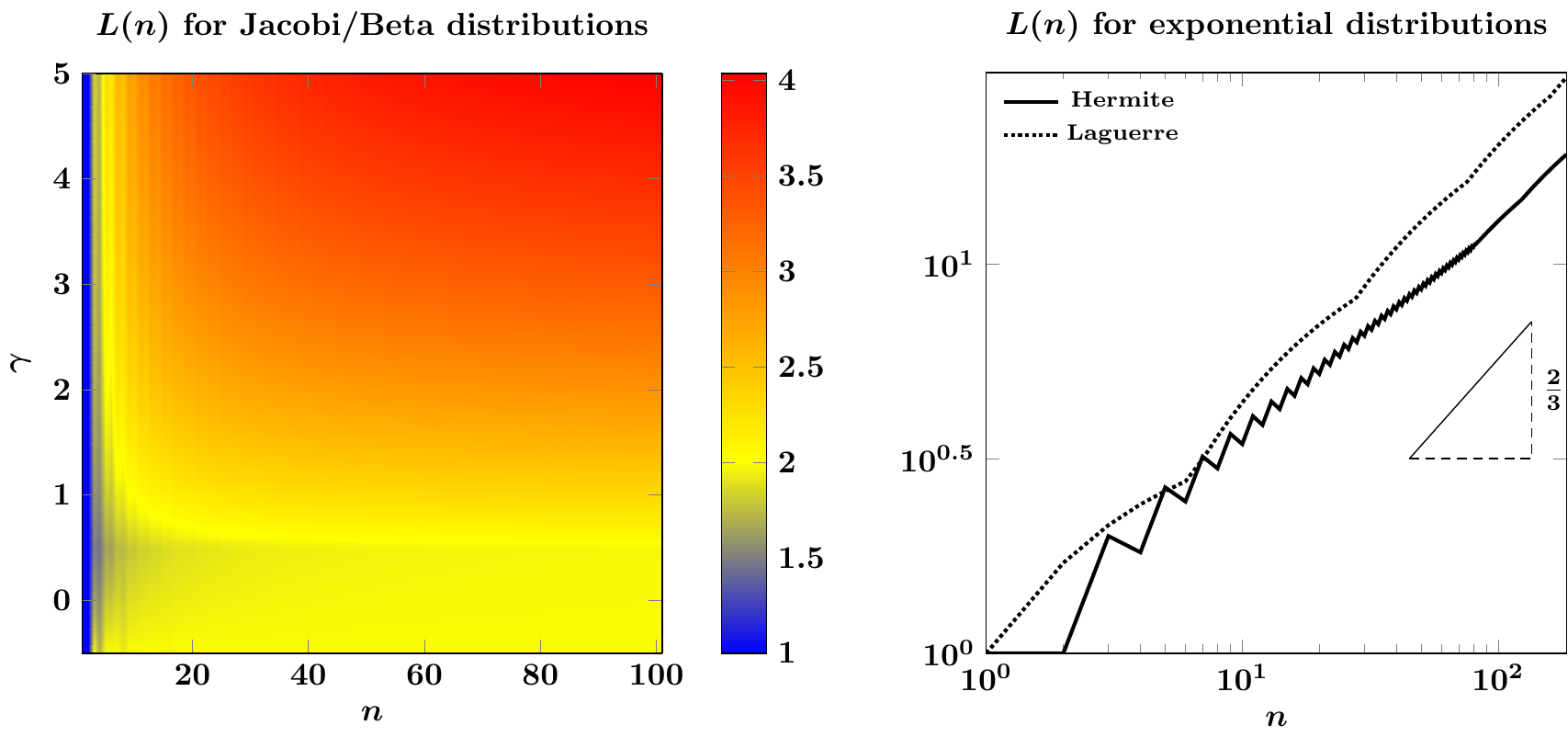}
  \end{center}
  \caption{Left: Bound $L$ for Jacobi polynomials with symmetric parameters $\gamma = \delta$. Right: Bound for two exponential-type densities: Hermite polynomials with $\rho = \exp(-x^2)$ on $\R$, and Laguerre polynomials with $\rho = \exp(-x)$ on $[0, \infty)$.}\label{fig:univariate-sup-bounds}
\end{figure}

\subsection{Multivariate expansions}
This section contains our main results that utilize the univariate supremum bounds established in the previous section. Given $\bs{n} \in \N^d$, we define the product bound $L(\bs{n})$:
\begin{align}\label{eq:multivariate-L}
  L(\bs{n}) = \prod_{i=1}^d L_i(n_i)
\end{align}

Our procedure runs into the familiar curse of dimensionality by requiring the sample count to dominate $s$ times the product of all $d$ of the $L_i$ factors.
\begin{theorem}\label{thm:main-sample-count}
  Let $\Lambda$ be a finite index set with size $N$. Let $\bs{n}$ be the smallest multi-index such that $\Lambda \subseteq \Lambda^P_{\bs{n-1}}$. (This defines $\bsn$ uniquely.)
  Choose $M$ samples randomly without replacement from the measure $\nu_{\bs{n}}$. (This is the algorithm presented in Section \ref{sec:gpc-stuff:algorithm}.) Assume $M$ satisfies
  \begin{align}\label{eq:sample-count-criterion}
    M \geq L\left(\bs{n}\right) C_1 s \log^3(s) \log\left(N\right),
  \end{align}
  where $C_1$ is a universal constant. Under these conditions, then for any $\bs{c} \in \R^{N}$, let $\bs{c}^\sharp$ be the solution obtained by the weighted $\ell^1$ optimization problem defined by \eqref{eq:weighted-l1minimization}. Then,
  \begin{align*}
    \mathrm{Pr} \left[ \left\| \bs{c} - \bs{c}^\sharp\right\|_2 \leq \frac{C_2 \sigma_{s,1}\left(\bs{c}\right)}{\sqrt{s}} \right] \leq 1 - N^{-\gamma \log^3(s)},
  \end{align*}
  where $C_2$ and $\gamma$ are universal constants. The individual factors $L_i(n_i)$ in \eqref{eq:multivariate-L} that comprise $L(\bs{n})$ in \eqref{eq:sample-count-criterion} are bounded by the three parts of Lemma \ref{lemma:bounds} when the components of $\bs{X}$ have the appropriate distributions.
\end{theorem}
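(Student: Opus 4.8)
The plan is to assemble the pieces that have already been laid out: the discrete orthonormality of the weighted polynomials, the sup-norm bounds, and the general RIP-for-bounded-orthonormal-systems machinery. First I would recall from Lemma \ref{lemma:general-psi-orthonormality} (specifically the last part) that the functions $\{\psi_{\bsk,\bsn}\}_{\bsk \in \Lambda}$ form an orthonormal system under the discrete measure $\nu_{\bsn}$, since $\Lambda \subseteq \Lambda^P_{\bsn-\bs1}$ by hypothesis. This is precisely a \emph{bounded orthonormal system} in the sense of \eqref{bos}: by the definition of $L_i(n_i)$ in the previous subsection and by the product structure \eqref{eq:psi-def}, we have $\sup_{\bsz \in \supp \nu_{\bsn}} |\psi_{\bsk,\bsn}(\bsz)|^2 \leq \prod_{i=1}^d L_i(n_i) = L(\bsn)$ for every $\bsk \in \Lambda$, so $L(\bsn)$ serves as the constant $L(N)$ in \eqref{bos}.

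Next I would observe that iid sampling from $\nu_{\bsn}$ is, by the remark following \eqref{eq:gq-measure}, the same as sampling uniformly (with replacement) from the tensor-product Gauss grid, and that the matrix $\bs{D}$ constructed in step (4) of the algorithm in Section \ref{sec:gpc-stuff:algorithm}, with entries $(D)_{m,n} = \sqrt{v_m}\,\phi_{\bsk(n)}(\bs{y}_m) = \psi_{\bsk(n),\bsn}(\bs{y}_m)$, is exactly the interpolation matrix of the orthonormal system $\{\psi_{\bsk,\bsn}\}$ at the sampled points. Hence Theorem \ref{RIP_bos} applies verbatim: under the sample-count hypothesis \eqref{eq:sample-count-criterion}, which is the instantiation of \eqref{num_RIP} with $L = L(\bsn)$ and a fixed $\delta$ (say $\delta = 0.3$), the RIC $\delta_s$ of $\frac{1}{\sqrt M}\bs{D}$ satisfies $\delta_s \leq \delta < 0.307$ with probability at least $1 - N^{-\gamma\log^3 s}$. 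Then Theorem \ref{thm:l1-l0-connection} with matrix $\frac{1}{\sqrt M}\bs{D}$ converts this RIP bound into the stated reconstruction-error estimate for the solution $\bs{c}^\sharp$ of the weighted $\ell^1$ problem \eqref{eq:weighted-l1minimization}, noting that rescaling $\bs{D} \mapsto \frac{1}{\sqrt M}\bs{D}$ rescales the constraint data by the same factor and leaves the minimizer $\bs{c}^\sharp$ unchanged. The final sentence about the $L_i(n_i)$ factors is then just a pointer to Lemma \ref{lemma:bounds} and requires no further argument.

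One technical wrinkle I would need to address is that Theorem \ref{RIP_bos} as stated assumes sampling \emph{with} replacement (iid), whereas the algorithm draws $M$ points \emph{without} replacement from the finite grid $\Theta_{\bsn}$. I would handle this either by invoking the standard fact that concentration/RIP estimates for sampling without replacement are no worse than (indeed, can only improve over) the iid case — a monotonicity that can be justified via a coupling or via known results on sampling without replacement — or, more simply, by noting that when $M \ll |\Theta_{\bsn}|$ the two sampling models are essentially interchangeable for the purposes of the union-bound arguments underlying Theorem \ref{RIP_bos}. I expect this to be the only real obstacle; once it is dispensed with, the proof is a direct chain: Lemma \ref{lemma:general-psi-orthonormality} $\Rightarrow$ bounded orthonormal system with constant $L(\bsn)$ $\Rightarrow$ Theorem \ref{RIP_bos} $\Rightarrow$ Theorem \ref{thm:l1-l0-connection}.

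In summary, the proof is essentially a verification that all hypotheses of the cited black-box theorems are met, with the substantive content having already been done in the supremum-bound lemmas of the preceding subsection. I would write it compactly: state that $\{\psi_{\bsk,\bsn}\}_{\bsk\in\Lambda}$ is $\nu_{\bsn}$-orthonormal with $\|\psi_{\bsk,\bsn}\|_\infty^2 \leq L(\bsn)$, apply Theorem \ref{RIP_bos} (modulo the with/without-replacement remark) to get $\delta_s < 0.307$ for $\frac{1}{\sqrt M}\bs{D}$ under \eqref{eq:sample-count-criterion}, then apply Theorem \ref{thm:l1-l0-connection} to conclude the error bound, and finish by citing Lemma \ref{lemma:bounds} for the three distributional cases.
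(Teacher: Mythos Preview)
Your proposal is correct and follows essentially the same route as the paper: establish that $\{\psi_{\bsk,\bsn}\}_{\bsk\in\Lambda}$ is a $\nu_{\bsn}$-orthonormal system with sup bound $L(\bsn)$, invoke the RIP-for-bounded-orthonormal-systems result under the sample-count condition, then apply Theorem~\ref{thm:l1-l0-connection}. The only notable difference is in handling the without-replacement sampling: where you sketch a coupling/monotonicity argument, the paper instead cites a specific black-box result (Corollary~12.38 of Foucart--Rauhut) stating that row-subsampling an orthogonal matrix with bounded entries yields the same RIP conclusion as Theorem~\ref{RIP_bos}; this is cleaner than your proposed workaround but amounts to the same thing. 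One small normalization slip: the algorithm's matrix $\bs{D}$ has entries $\sqrt{v_m}\,\phi_{\bsk(n)}(\bs{y}_m) = (\prod_i n_i)^{-1/2}\psi_{\bsk(n),\bsn}(\bs{y}_m)$, not $\psi_{\bsk(n),\bsn}(\bs{y}_m)$ itself, but as you already observe for the $1/\sqrt{M}$ factor, uniform rescaling of the constraint leaves the minimizer unchanged.
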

The proof of the above Theorem, utilizing Lemmas \ref{lemma:bounds}, is given in Appendix \ref{sec:main-theorem}. The above theorem quantifies the size of $M$ so that the subsampling algorithm in Section \ref{sec:gpc-stuff:algorithm} converges with high probability. In particular, we frame this result slightly differently compared to the conclusion of Theorem \ref{RIP_bos}: we make an explicit choice for the RIP constant $\delta$ so that recoverability is guaranteed (cf. Theorem \ref{thm:l1-l0-connection}).

Lemmas \ref{lemma:bounds} cover many of the standard univariate distributions for $X^i$, in both the bounded and unbounded cases. Thus, the theorem above applies to very general cases of a random variable $\bs{X}$ with independent components. We point out some special cases of our result:
\begin{itemize}
  \item If $\rho$ is uniform over $\Gamma = [-1,1]^d$, so that the $\varphi_{\bs{k}}$ are tensor-product Legendre polynomials, then $L(\bs{n})$ satisfies
    \begin{align*}
      L(\bs{n}) \leq C^d,
    \end{align*}
    with $C$ the univariate bound in Lemma \ref{lemma:jacobi-bound} with $\gamma = \delta = 1$. We notice that the above constant depends exponentially on the dimension $d.$ The authors in \cite{TangIa} show that this constant is essentially $C = 3$. Results from Figure \ref{fig:univariate-sup-bounds} suggest that a sharper result would be $C = 2$, although this is not proven.
  \item If $\rho$ corresponds to a standard Gaussian density function over $\mathbb{R}^d$, so that the $\varphi_{\bs{k}}$ are tensor-product Hermite polynomials, then each univariate $L_i(n_i)$ satisfies an $n_i^{2/3}$ bound, so that
    \begin{subequations}\label{eq:hermite-bounds}
    \begin{align}\label{eq:quadratric-bound-hermite}
      L(\bs{n}) &\leq C^{2d/3} {\bs{n}}^{\bs{2/3}}, & \bs{2/3} &= \left(2/3, 2/3, \ldots, 2/3\right) \in \mathbb{R}^d,
    \end{align}
    with $C$ the constant given in Lemma \ref{lemma:full-exponential-bound}. If the maximum polynomial degree $n_i - 1 \triangleq n-1$ is uniform for all dimensions, then we have
    \begin{align}\label{eq:quadratic-bound-hermite-isotropic}
      L(\bs{n}) &\leq (C n)^{2d/3},
    \end{align}
    This result unfortunately exhibits not only exponential dependence on the parametric $d$, but also algebraic dependence on the maximum polynomial degree $n$. Nevertheless, this bound on the supremum of the Gauss quadrature-weighted polynomials is sharp. However, the results in Figure \ref{fig:univariate-sup-bounds} show that for polynomial degree $n-1 = 9$, we empirically observe $C n^{2/3} \lesssim 4$, so that for a degree-9 polynomial approximation, the requirement \eqref{eq:sample-count-criterion} states
    \begin{align}\label{eq:sample-count-hermite}
      M &\geq 4^d s \log^3(s) \log\left(N\right), & (n \leq 10)
    \end{align}
  \end{subequations}
    We note that in high dimensions one is more likely to use low-degree approximations (small $n$) so that in high dimensions this requirement is comparable to the bounded-case sample count criterion.
  \item If $\rho$ is given by $\rho(\bs{x}) \propto \exp \left(-\left\|\bs{z}\right\|_1\right)$ for $\bs{z} \in [0, \infty)^d$, then we also obtain the set of bounds \eqref{eq:hermite-bounds} under the same conditions.
\end{itemize}
Finally, we remark that, once $\bs{n}$ is identified as the multi-index identifying maximum the polynomial degree in each dimension, one may choose to subsample from the measure $\nu_{{\bs{m}}}$, where ${\bs{m}} \geq \bs{n}$. When $\bs{X}$ is a bounded random variable, this would produce the same bound \eqref{eq:sample-count-criterion} with $\bs{n}$, even though one subsamples from $\nu_{{\bs{m}}}$. However, if $\bs{X}$ has an exponential density (either one-sided or two-sided exponential), our analysis suggests that such a strategy would have a bound $L$ that behaves like $L \sim \bs{m}^{\bs{2/3}} > \bs{n}^{\bs{2/3}}$. The penalty for this unbounded case makes sense: using Gauss quadrature rules with ${\bs{m}} > \bs{n}$ results in possible sampling of points that lie in regions where degree-$\bs{n}$ polynomials weighted by $\lambda_{{\bs{m}}}$ are decaying quickly to 0, making recovery by sampling in these regions difficult. It is likely that one can improve the estimates in \cite{jakeman_2015} to result in a tighter bound, but we still expect this bound to be greater than $\bs{n}^{\bs{2/3}}$.

\section{Numerical examples}\label{sec:results}

We now provide with some numerical examples to test the theoretical
findings and the convergence properties of the sparse recovery approach, with randomly chosen Gaussian grid.
For the implementation of the $\ell_1$-minimization,
we employ available tools such as Spectral Projected Gradient algorithm
(SPGL1) from \cite{vanFrie} that was implemented in the MATLAB
package SPGL1 \cite{vanFrie2}.

\subsection{Polynomial function recovery}

We first assume the  target function has a polynomial form and choose a
sparsity level $s$ and then fix $s$ coefficients of the polynomial
while keeping the rest of the coefficients zero. The
values of the $s$ non-zero coefficients are drawn iid from a standard normal
distribution. This procedure produces
target coefficients  that we seek to recover using the
$\ell_1$-minimization algorithms.

In what follows, we use the following terms in figures to describe our recovery procedures:
\begin{itemize}
  \item \textit{Random} -- sampling iid from the orthogonality measure $\rho$ and solving the unweighted $\ell_1$ optimization problem \eqref{eq:unweighted-l1minimization}.
  \item \textit{PreChebyshev} -- sampling iid from the Chebyshev measure, i.e., the measure with probability density
    \begin{align*}
      v(\bs{x}) = \frac{1}{\pi^d \prod_{i=1}^d \sqrt{1 - x_i^2}},
    \end{align*}
    and solving the preconditioned optimization problem \eqref{eq:weighted-l1minimization} using the weights \eqref{eq:chebyshev-weights}.
  \item \textit{Gaussian} -- subsampling from a Gaussian quadrature grid using the Gaussian quadrature weights to solve the preconditioned problem \eqref{eq:weighted-l1minimization}. (This is the method proposed in this paper.)
  \item \textit{Chebyshev} (resp. \textit{Uniform}) -- sampling iid from the Chebyshev (resp. uniform) measure and solving the unweighted optimization problem \eqref{eq:unweighted-l1minimization}.
\end{itemize}

We recall our notation: $n_i$ is the number of Gaussian quadrature points in dimension $i$, with $n_i - 1$ being the maximum polynomial degree in dimension $i$ from the index set $\Lambda$. Thus, choosing $\bs{n}$ and $d$ defines the total degree space $\Lambda$. We define the size of $\Lambda$ to be $N$, coinciding with the number of columns in the matrix $\bs{D}$ that is input to the $\ell_1$ optimization problem \eqref{eq:weighted-l1minimization}. The number of samples we use is $M$, and is the number of points subsampled from the tensor-product Gaussian quadrature grid, coinciding with the number of rows of the matrix $\bs{D}$.

\subsubsection{Uniform measure and Legendre polynomials}

The first test is the recovery of sparse Legendre polynomials, with the index set $\Lambda$ corresponding to the two dimensional
total degree space $\mathrm{T}_{n-1}^2$ and $\mathrm{T}_{n-1}^{10}$, respectively. We note that in this case our method coincides with the method in \cite{TangIa}. We examine the frequency of successful recoveries when the number of sample points is fixed at $M = 85$. This is accomplished by conducting 500 trials of the algorithms and counting the successful ones. A recovery is considered successful when the
resulting coefficient vector $\mathbf{c}$ satisfies $\|\mathbf{c}-\hat{\mathbf{c}}\|\leq 10^{-3}$. In the left-hand plot of Figure \ref{fig:legendre_recovery}, we show the recovery rate for sparse Legendre polynomials(with $d=2$, $n_1=n_2=11$, and thus $N=66$) as a function of sparsity level $s$. In the right-hand plot of Figure \ref{fig:legendre_recovery}, we show the recovery rate for sparse Legendre polynomials(with $d=10$, $\bs{n}=\bs{4}$, and $N=286$) as a function of sparsity level $s$.
We have also tested preconditioned recovery results with MC Chebyshev samples. In the cases we have tested, the Gaussian subsampling method works as well or better than the other methods, both for low and high dimensions.
\begin{figure}[t]
\begin{center}
\includegraphics[width=0.49\textwidth]{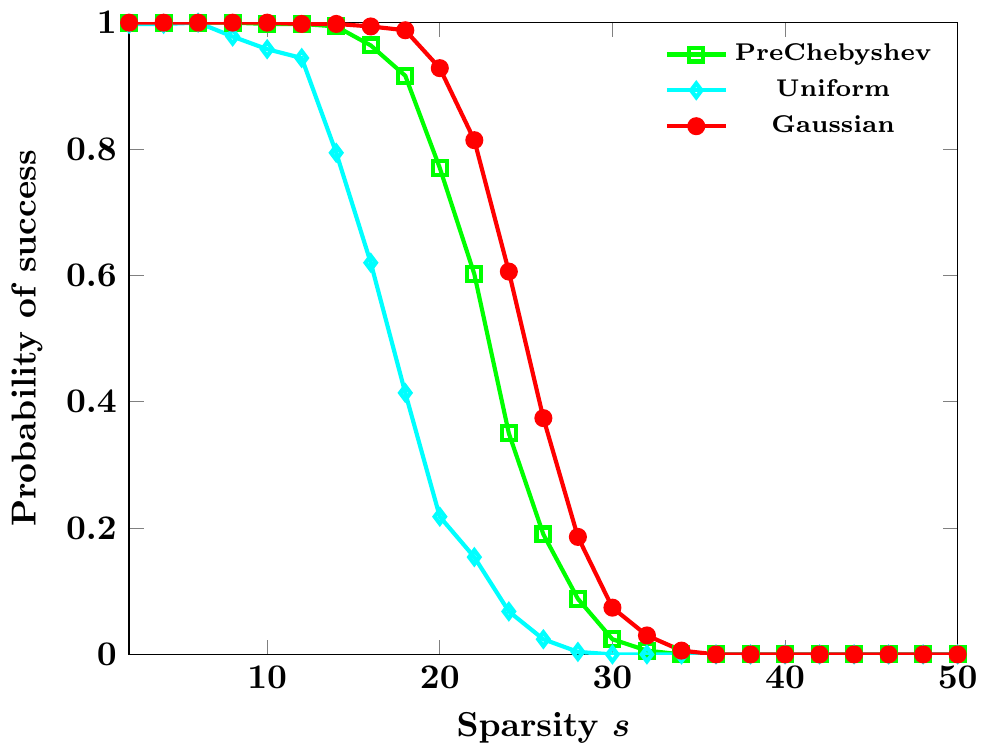}
\includegraphics[width=0.49\textwidth]{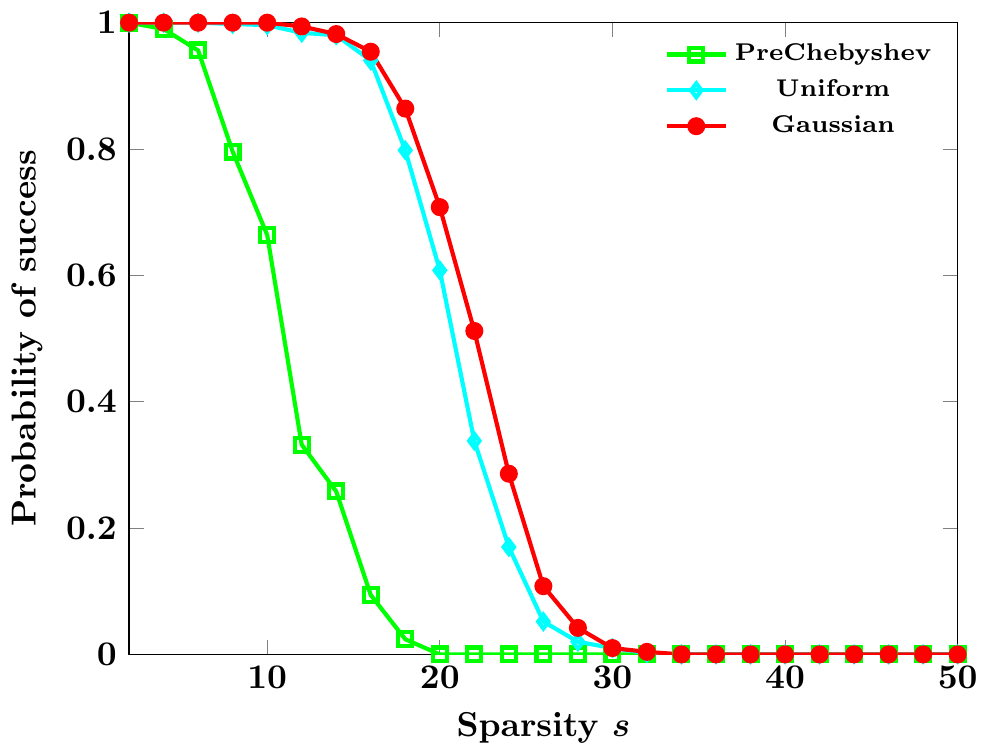}
\end{center}
\caption{Recovery probability with respect to sparsity $s$ of Legendre polynomials with fixed sample number $M=85$. Left: $d=2$, $\bs{n}=\bs{21}$ ($N=231$). Right: $d=10$, $\bs{n}=\bs{4}$ ($N=286$).}\label{fig:legendre_recovery}
\end{figure}

\subsubsection{Chebyshev measure and Chebyshev polynomials}

The second test is the recovery of sparse Chebeshev polynomials, with the index set $\Lambda$ corresponding to the two dimensional
total degree space $\mathrm{T}_{n-1}^2$ and $\mathrm{T}_{n-1}^{10}$, respectively. We examine the probability of successful recoveries when the number of sample points is fixed at $M = 85$. In the left-hand plot of Figure \ref{fig:Chebyshev_recovery},
we show the recovery rate for sparse Chebyshev polynomials(with $d=2$, $\bs{n}=\bs{11}$, and $N=66$) as a function of sparsity level $s$. In this low-dimensional case, the results are similar to recovery of Legendre polynomials in Figure \ref{fig:legendre_recovery}.
In the right-hand plot of Figure \ref{fig:Chebyshev_recovery}, we show the recovery rate for sparse Chebyshev polynomials(with $d=10$, $\bs{n}=4$, and $N=286$) as a function of sparsity level $s$. Again we see that the subsampling Gaussian quadrature case performs better than other methods, although the improvement in the high-dimensional cases is more minor.

\begin{figure}[h]
\begin{center}
\includegraphics[width=0.49\textwidth]{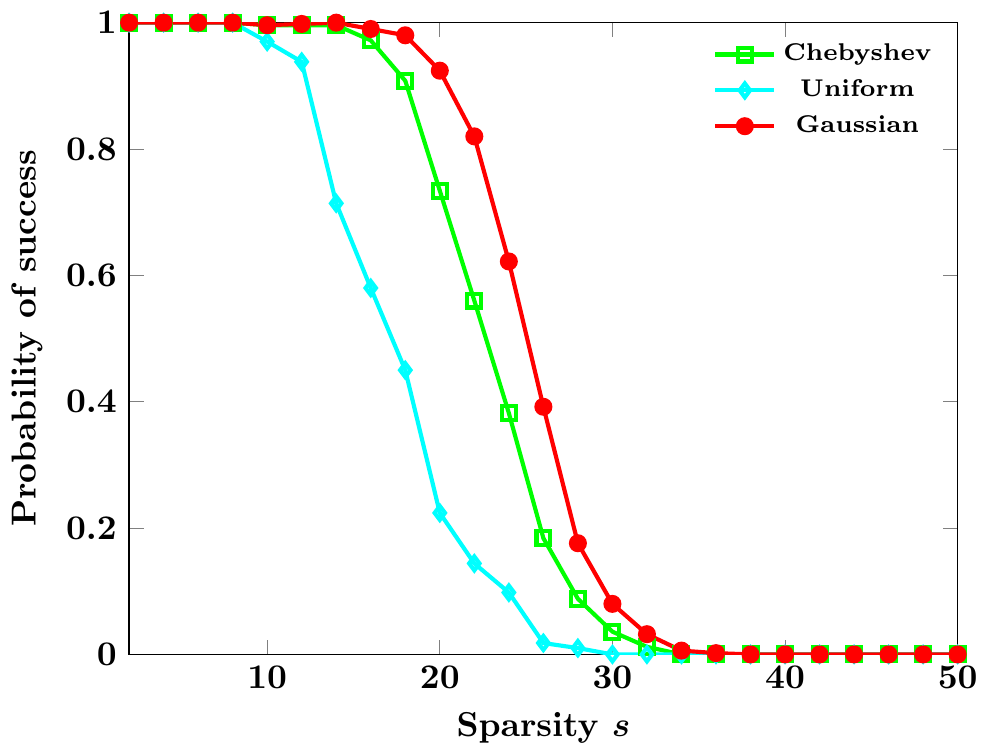}
\includegraphics[width=0.49\textwidth]{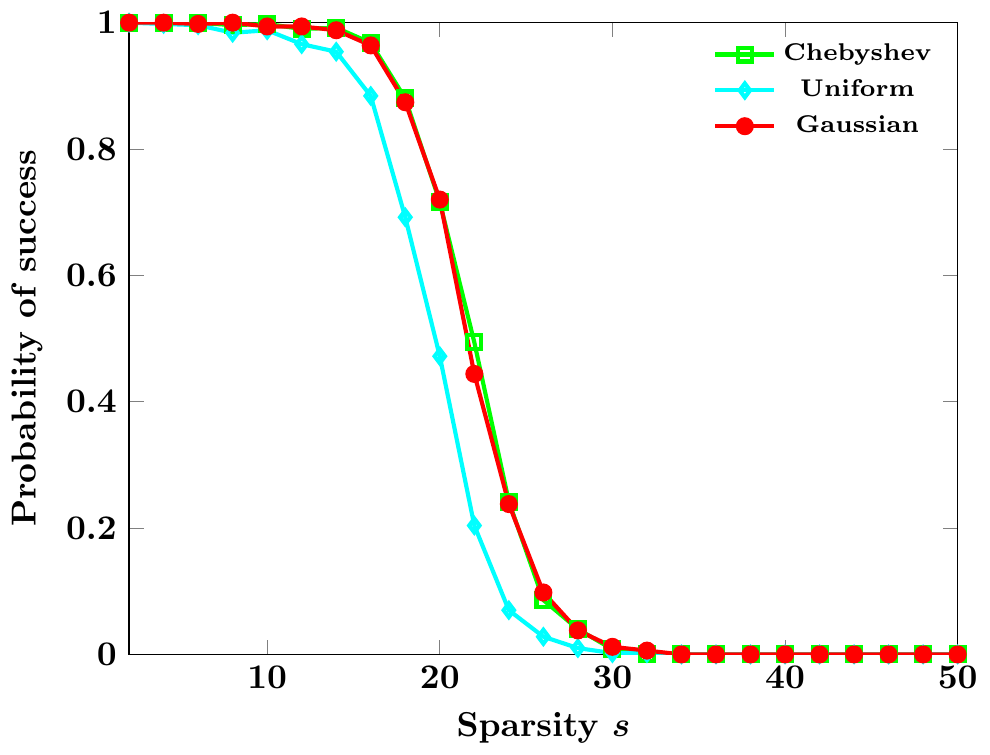}
\end{center}
\caption{Recovery probability with respect to sparsity $s$ of Chebyshev polynomials with fixed sample number $M=85$. Left: $d=2$, $\bs{n}=\bs{21}$ ($N=231$). Right: $d=10$, $\bs{n}=\bs{4}$ ($N=286$).}
\label{fig:Chebyshev_recovery}
\end{figure}

\subsubsection{Gaussian measure with Hermite polynomials}

In Fig.\ref{fig:Hermite_recovery}, we report the numerical results for the Gaussian measure with Hermite polynomials approximation.
We examine the maximum coefficient error, $\|\mathbf{c}-\hat{\mathbf{c}}\|_{\infty}$, as we increase the number of sample points. In the left-hand plot of Figure \ref{fig:Hermite_recovery}, we show the convergence rate for sparse Hermite polynomials (with $d=2$, $\bs{n}=\bs{21}$, and $N=231$) as a function of number of sample points. In the right-hand plot of Figure \ref{fig:Hermite_recovery}, we show the convergence rate for sparse Hermite polynomials(with $d=10$, $\bs{n}=\bs{4}$, and $N=286$) as a function of number of sample points. The sparsity in both cases is $s=5.$ Although subsampling from a tensor-product grid works well in low dimensions, in the high-dimensional $d=10$ case, we see that sampling according to the orthogonality measure produces better results. This is consistent with earlier observations \cite{XiuL1}.

\begin{figure}[h]
\begin{center}
\includegraphics[width=0.49\textwidth]{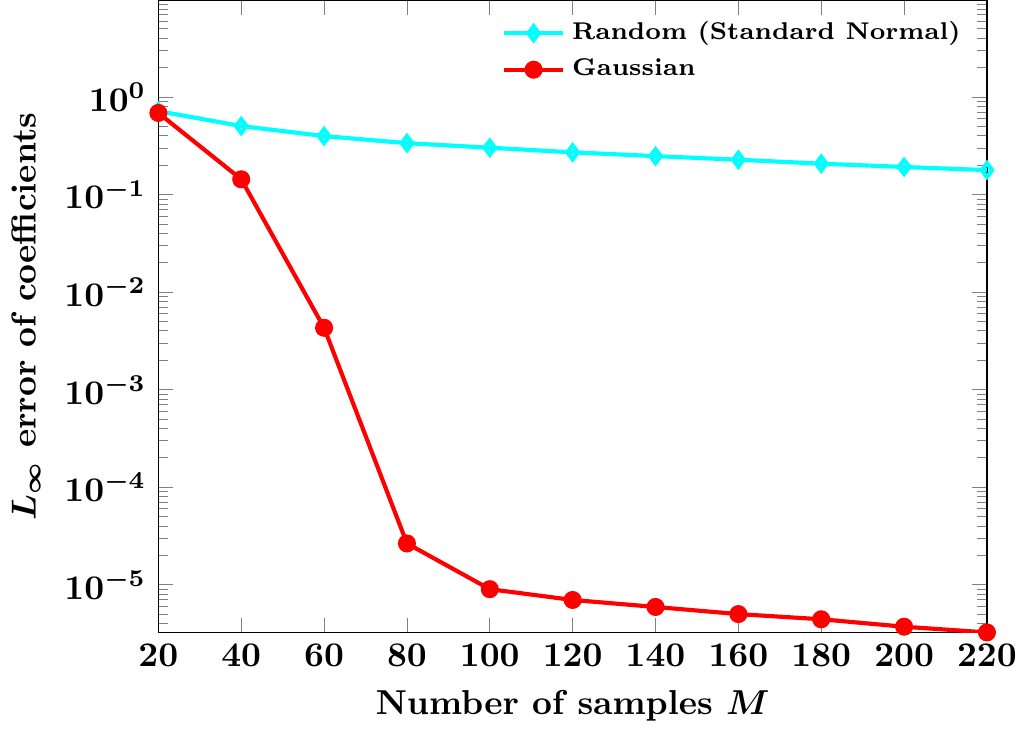}
\includegraphics[width=0.49\textwidth]{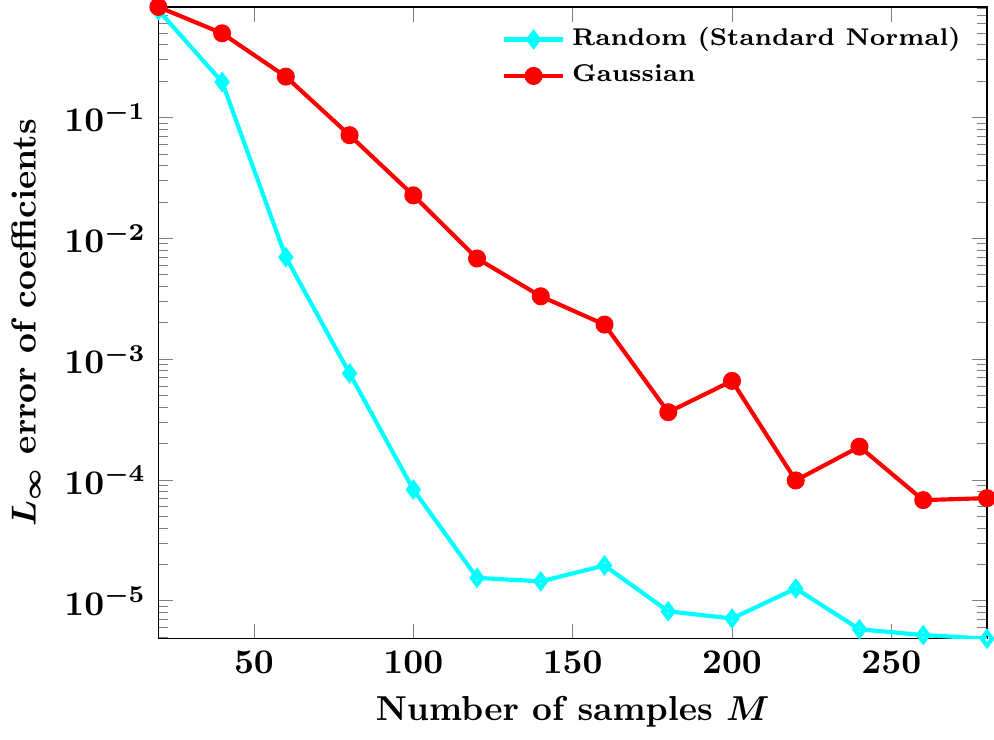}
\end{center}
\caption{Recovery error with respect to number of sample points of Hermite polynomials. Left: $d=2$, $\bs{n}=\bs{21}$, and $N=231$. Right: $d=10$, $\bs{n}=\bs{4}$, and $N=286$. For both plots, the error shown is the average over 500 trials.}\label{fig:Hermite_recovery}
\end{figure}

\subsection{Recovery of analytical functions}
In general, functions do not have a finite representation in the orthogonal polynomials, but instead have ``approximately" sparse representations. Here, we consider a few functions of this form.

We report the numerical error with Legendre polynomials for the underlying high-degree monomial function $f(\mathbf{x})=x_1^{10}x_2^{10}$ and high-dimensional Generalized Rosenbrock $f(\mathbf{x})=\sum\limits_{i=1}^{10}(1-x_i)^2+100(x_{i+1}-x_i^2)^2$. We attempt to recover a sparse representation of these functions in a Legendre polynomial basis. The left-hand plot of Figure~\ref{fig:Legendre_function} shows recovery of the monomial function (high degree, low dimension), and the right-hand plot shows recovery of the Rosenbrock function (low degree, high dimension). In both of these cases, subsampling from a Gaussian quadrature grid produces superior results when compared against standard alternatives.

\begin{figure}[h]
\begin{center}
\includegraphics[width=0.49\textwidth]{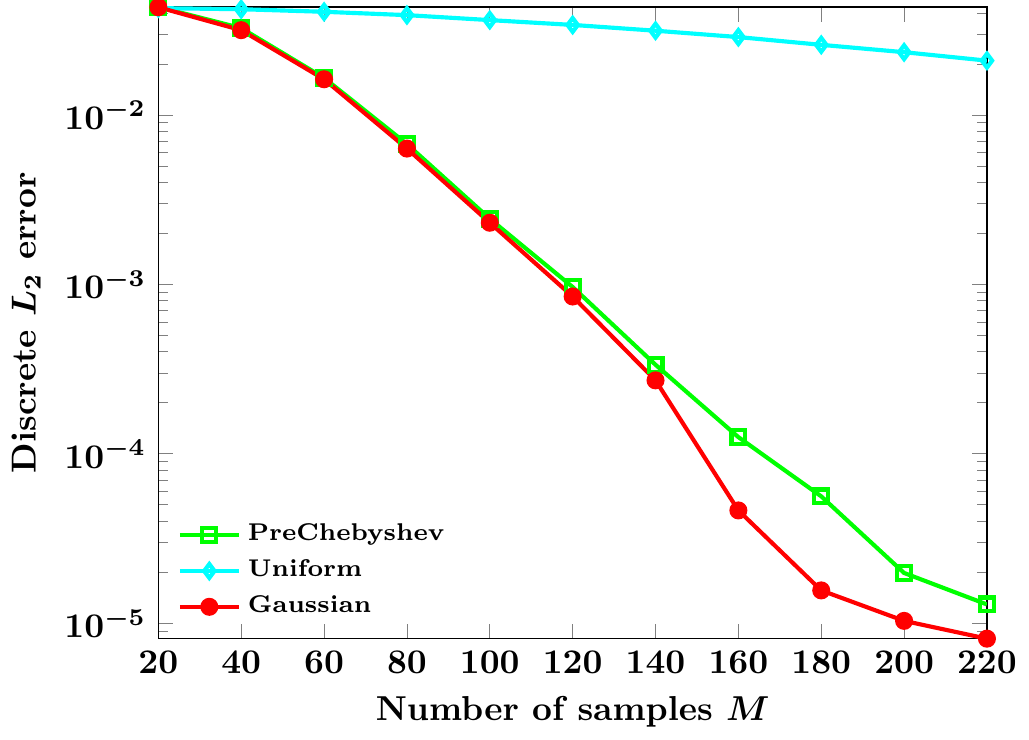}
\includegraphics[width=0.49\textwidth]{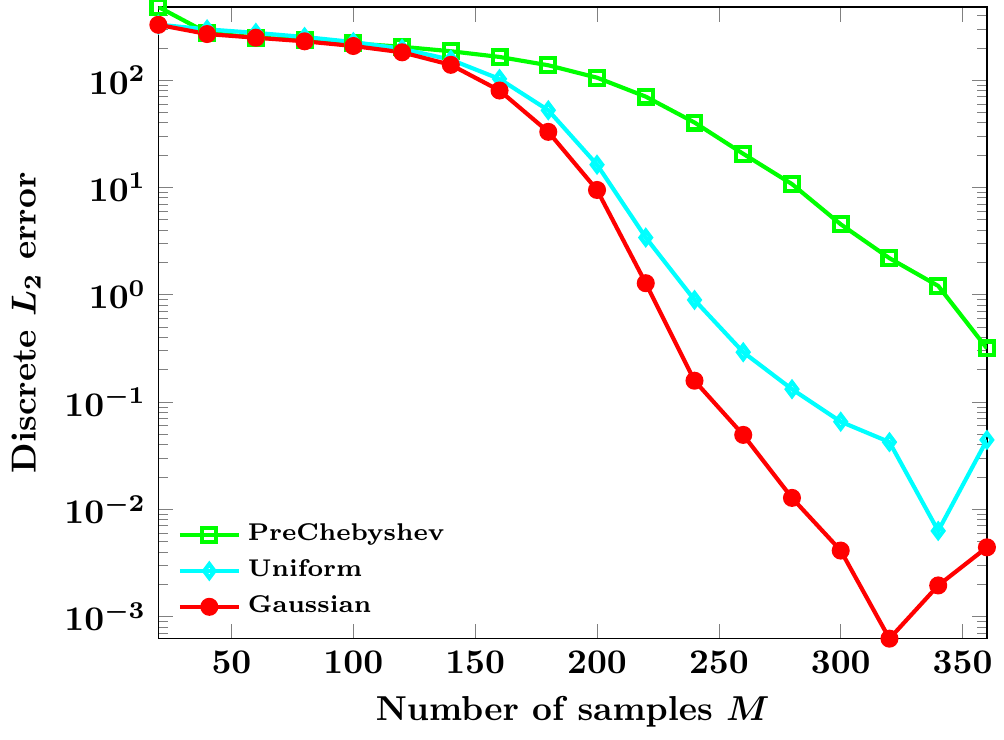}
\end{center}
\caption{Convergence rate against number of sample points. Left:High-degree monomial function($d=2,\bs{n}=\bs{21}$). Right: Generalized Rosenbrock ($d=10,\bs{n}=\bs{5}$). For both plots, the error is averaged over 500 trials.}\label{fig:Legendre_function}
\end{figure}

We run a similar test using Hermite polynomials for two different functions in two dimensions: $f_1(\mathbf{x})=\textmd{2}^{-0.2x_1^2-0.2x_2^2}$ and $f_2(\mathbf{x})=\textmd{e}^{-0.6x_1-0.6x_2}$. The recovery results are shown in Figure \ref{fig:Hermite_function}, where we see that in these low-dimensional cases, it is not always clear that subsampling from a Gaussian quadrature grid produces better results than sampling from the orthogonality measure. However, this does appear to be true if one can afford to take a larger sample count $M$.
\begin{figure}[h]
\begin{center}
\includegraphics[width=0.49\textwidth]{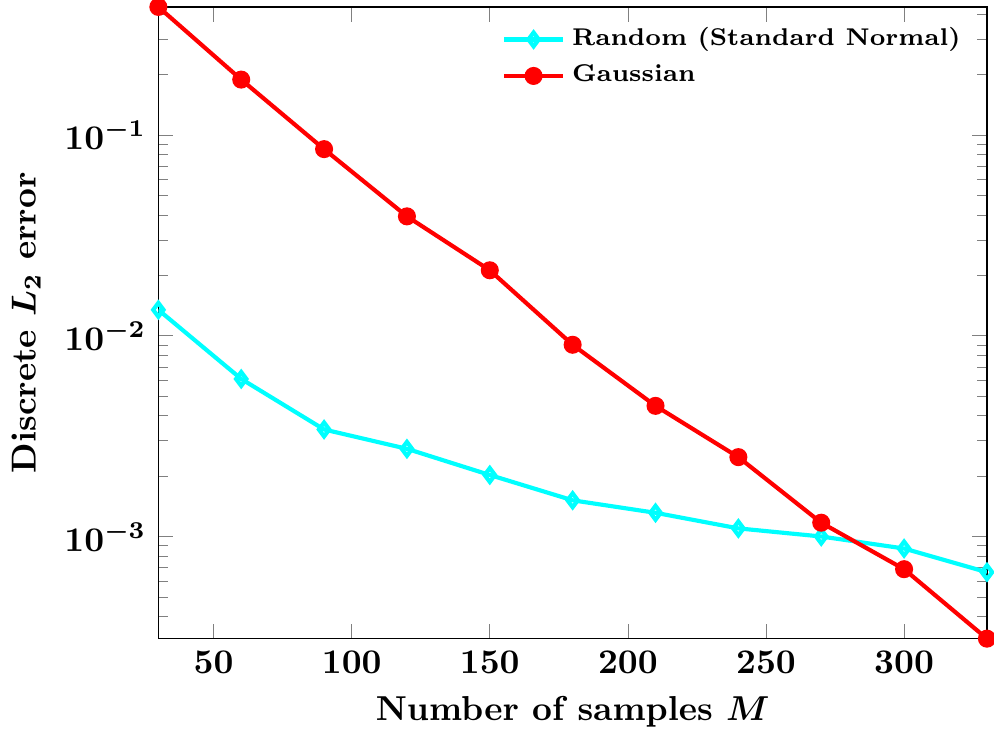}
\includegraphics[width=0.49\textwidth]{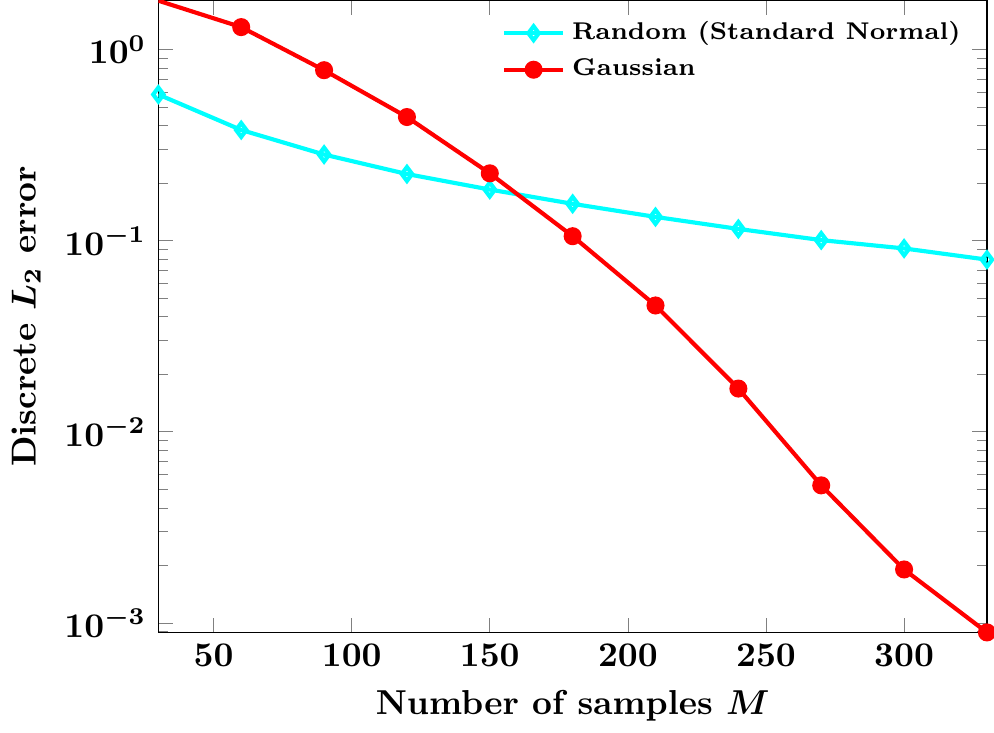}
\end{center}
\caption{Convergence rate against number of sample points. Left:$f_1(\mathbf{x})=2^{-0.2x^2-0.2x_2^2}.$($d=2,\bs{n}=\bs{26}$). Right: $f_2(\mathbf{x})=e^{-0.6x_1-0.6x_2}.$($d=2,\bs{n}=\bs{25}$). In both plots, the error is averaged over 500 trials.}\label{fig:Hermite_function}
\end{figure}

\subsection{A simple ODE with random inputs}
We consider a simple random ODE problem with Gaussian random input:
\begin{align}
\frac{du}{dt}=-k(X)u, \quad u(0)=1,
\label{ode}
\end{align}
where $k(X)$ is a function of a Gaussian random variable $X$; thus the Hermite functions
will be used as the approximation basis. To illustrate the idea, we set $k(X)=\beta X$. We approximate $u$, i.e., we recover the solution
\begin{align*}
  u(t,X) \sim \sum_{n=0}^{N-1} c_n(t) \varphi_{n}(X),
\end{align*}
by attempting to find a sparse representation of the coefficients $c_n$. Thus we are in fact approximating the function $\tilde{f}=e^{\frac{-y^2}{2}}u^2(t,y)$. We are interested in the second moment of the solution, i.e.
\[Q= \E u^2(t,X) \propto \int e^{\frac{-x^2}{2}}u^2(t,x)dx.\]
For each collocation points, one has to solve the ODE to get the information $u(t,y_i)$. The numerical
convergence results for the quantity of interest $Q$ is shown in Figure \ref{fig:ode_recovery} with $\beta=-0.65$ and $t=1$. In this case, the Gaussian quadrature subsampling strategy works very well.

\begin{figure}[h]
\begin{center}
\includegraphics[width=0.49\textwidth]{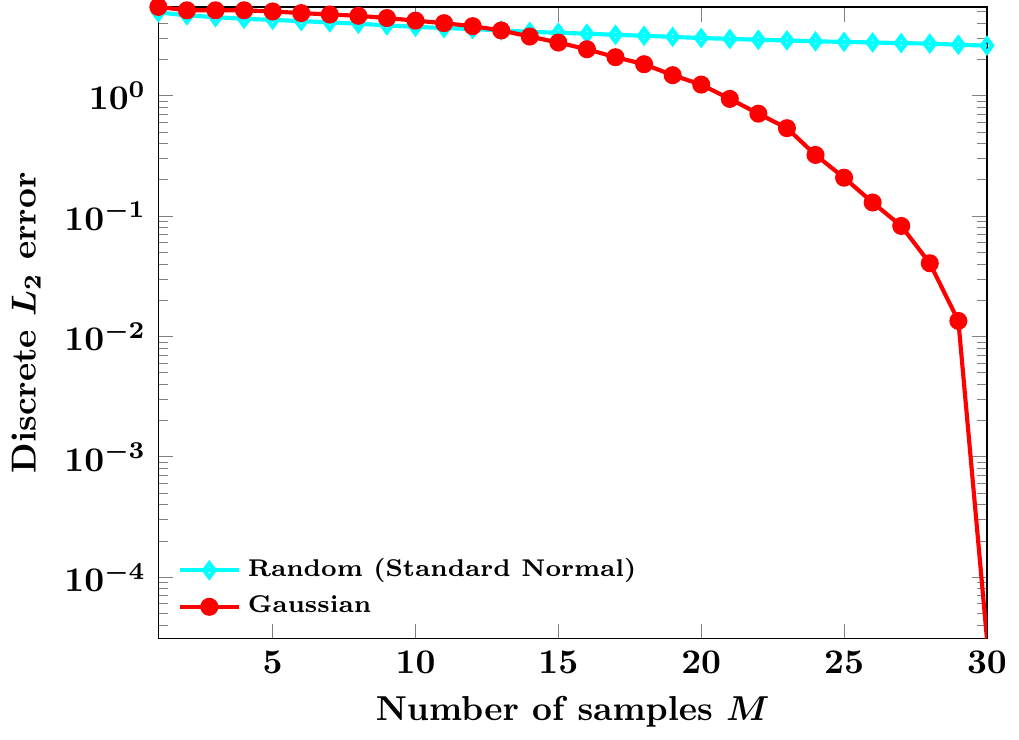}
\end{center}
\caption{Convergence rate against number of sample points. $\beta=-0.65$, with $n=30$, with error averaged over 500 trials.}
\label{fig:ode_recovery}
\end{figure}

\subsection{PDE with random inputs}
This section illustrates the computational performance of our algorithm for the following stochastic linear elliptic problems in two spatial dimensions.
\begin{equation}
\left\{
\begin{array}
{ll}%
-\nabla\cdot(a(\bs{y}, \omega)\nabla u(\bs{y}, \omega))=f(\bs{y},\omega) &\text{in} \ \ \ \mathscr{D}\times\Omega,\\
u(\bs{y},\omega)=0 & \text{on} \ \ \ \partial \mathscr{D} \times \Omega,\\
\end{array}
\right.  \label{spde}%
\end{equation}
with spatial domain $\mathscr{D}=[0,1]^2$. For these numerical examples we take a deterministic load
$f(\boldsymbol{y},\omega)=\cos(y_1)\sin(y_2)$ and construct the random diffusion coefficient $a_N(\bs{y},\omega)$ with one-dimensional
spatial dependence as in \cite[p. 2336]{BNT}:
\begin{equation*}
log(a_N(\omega,\boldsymbol{y})-0.5)=1+X_1(\omega)\bigg(\frac{\sqrt{\pi}L}{2}\bigg)^{1/2}+\sum\limits_{i=2}^{3}
\zeta_if_i(y)X_i(\omega), \label{nu1}
\end{equation*}
where
\begin{equation*}
\zeta_i:=(\sqrt{\pi}L)^{1/2}\text{exp}\bigg(\frac{-(\lfloor\frac{i}{2}\rfloor\pi
L)^2}{8}\bigg ),\ \ \ \text{if} \ i>1,
 \label{nu2}
\end{equation*}
and
\begin{equation*}
  f_i(\bs{y}):=\left\{
\begin{array}[c]{ll}
\sin\bigg(\frac{\lfloor\frac{i}{2}\rfloor\pi y_1}{L_p}\bigg), &
\text{if} \ i\ \text{even},\\
\cos\bigg(\frac{\lfloor\frac{i}{2}\rfloor\pi y_1}{L_p}\bigg), &
\text{if} \ i\ \text{odd}. \end{array}\right.
 \label{nu3}
\end{equation*}
Here $\{X_i\}_{i=1}^{3}$ are mutually independent and
are each uniformly distributed on the interval $[-1,1]$. Thus a family of
Legendre polynomials is used to approximation as a function of $\bs{X}$. For
$y_1\in [0,1]$, let $L_c=1/64$ be a desired physical correlation length
for $a(\bs{y},\omega)$. Then the parameter $L_p$ and $L$ are
$L_p=\max\{1,2L_c\}$  and $L=\frac{L_c}{L_p}$, respectively.
The deterministic elliptic equations are solved by a standard finite element method
and the spatial domain $\mathscr{D}$ is partitioned into 648 triangles with 1369 unknowns.

As the exact solution is not available, we use a high level sparse grid collocation
method to obtain the reference solution. In Fig. \ref{fig:pde_recovery}, we see plots of error in
$\ell_2$ norm of the mean and standard deviation between the reference and $\ell_1$-minimization for
various sampling techniques as a function of the number of samples $M$.

\begin{figure}[h]
\begin{center}
\includegraphics[width=0.49\textwidth]{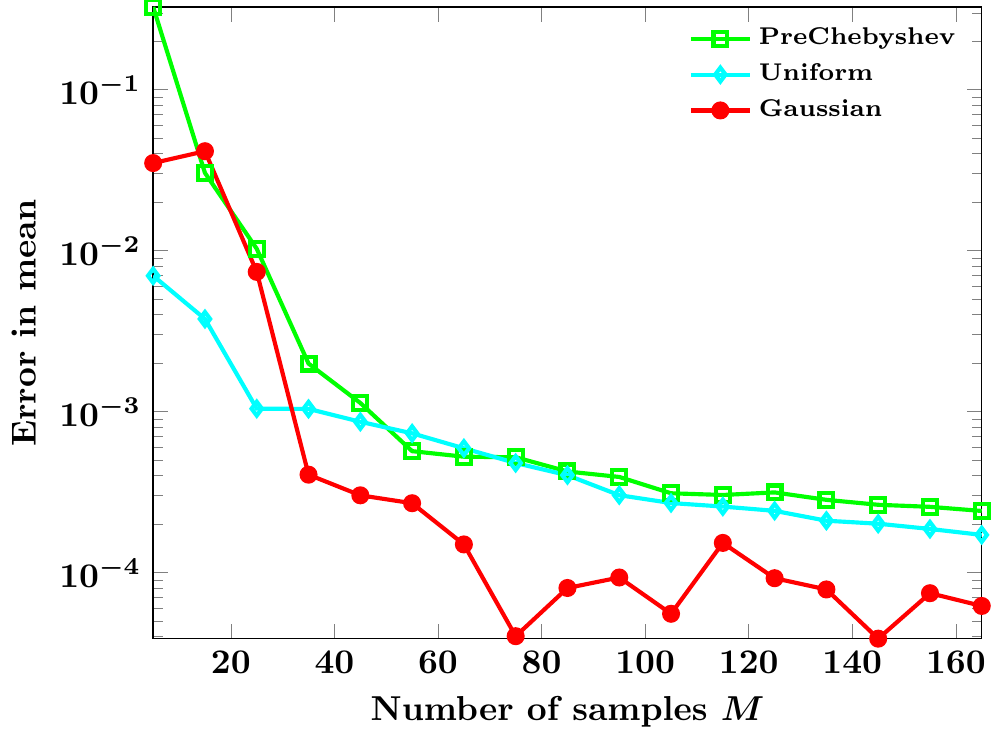}
\includegraphics[width=0.49\textwidth]{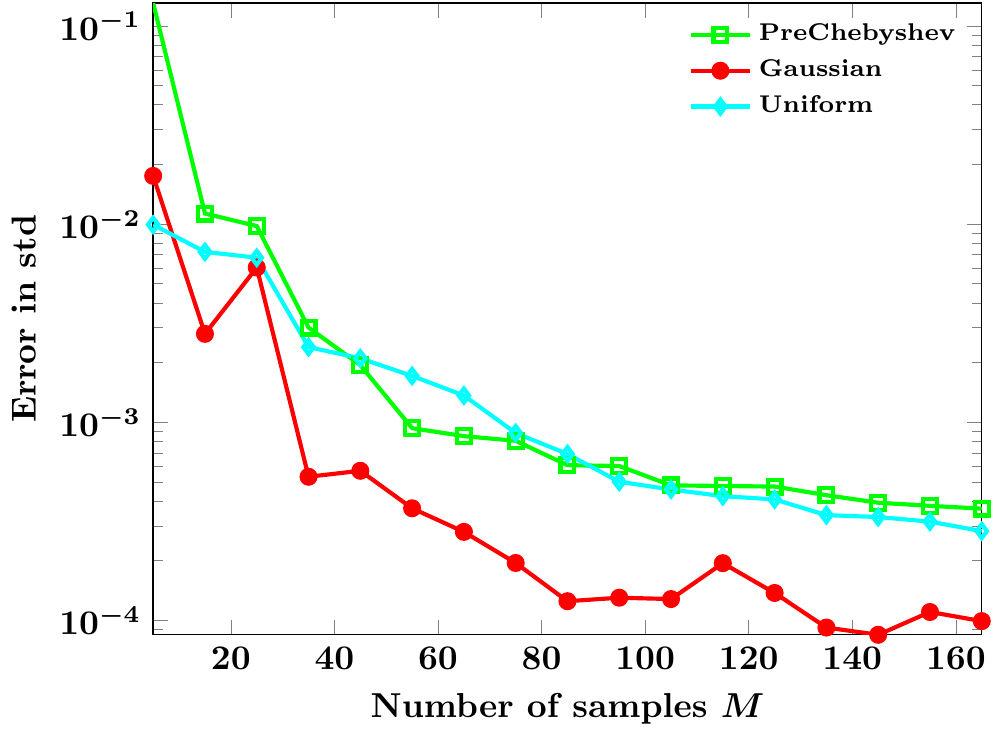}
\end{center}
\caption{Error in $\ell_2$ norm of the mean and standard deviation between the reference and $\ell_1$-minimization for the
various sampling method as a function of the number of samples $M$. $d=3$, $\bs{n}=\bs{11}$ $(N=286)$}\label{fig:pde_recovery}
\end{figure}

\section{Conclusion}
We investigate the problem of approximating a multivariate function via $\ell_1$ minimization methods. Such strategies presume that the underlying function is sparse, or approximately sparse, in the approximating basis. The ability to exactly recover a sparse representation with a small number of samples has been the focus of much research in compressive sampling. We propose sampling by randomly subsampling a tensor-product grid of Gaussian quadrature points. This procedure was investigated in \cite{TangIa} for Legendre polynomials (corresponding to a uniformly distributed random variable). We have provided a nontrivial extension in both analysis and numerical results, covering general Beta distributions taking values on compact domains, as well as one- and two-sided exponential random variable taking values on unbounded domains. In particular, our analysis covers the case of Hermite polynomials (normally-distributed random variables) and Laguerre polynomials (exponential random variables). Our framework provides a non-intrusive way to construct generalized Polynomial Chaos expansions for very general classes of distributions.

\appendix

\section{Proofs}

In this appendix we collect results which imply the results given by the three parts of Lemma \ref{lemma:bounds}. These are essentially well-known results in the theory of orthogonal polynomials. Our analysis uses these well-known results in fairly straightforward ways.

We treat individually the proofs of each of the three Lemmas \ref{lemma:jacobi-bound}, \ref{lemma:full-exponential-bound}, and \ref{lemma:half-exponential-bound}. The strategy for each proof is identical: First we characterize the interval in which the univariate orthogonal polynomial zeros $z^i_j$ are located. Then we use established bounds on weighted orthogonal polynomial families on those intervals.

Note that our main reference for bounds on orthogonal polynomials is \cite{jakeman_2015}, which explicitly states these bounds in the form that we require. However, the conclusions in \cite{jakeman_2015} are essentially a reshuffling of far deeper, more technical, and elegant results in \cite{levin_christoffel_1992,nevai_generalized_1994,levin_orthogonal_2005,levin_orthogonal_2006}.

After proving the three Lemmas \ref{lemma:bounds}, we end with the proof of our main result, Theorem \ref{thm:main-sample-count}.

\subsection{Proof of Lemma \ref{lemma:jacobi-bound}}
Assume the setup of Lemma \ref{lemma:jacobi-bound}: that $X^i$ is a scalar random variable taking values on $[-1,1]$ with a Beta distribution with shape parameters $\gamma, \delta \geq \frac{1}{2}$. We first establish intervals in which the zeros of orthogonal polynomial lie, and then use boundedness results on these intervals. This bounded case is easiest, for which the $n$ zeros $z^i_j$ for $j = 1, \ldots, n$ all lie inside the $n$-independent compact interval of orthogonality.
\begin{lemma}\label{lemma:jacobi-zeros}
  For all $n \in \N$, we have $\left\{ z^i_j\right\}_{j=1}^n \subset [-1,1]$.
\end{lemma}
See, e.g., Theorem 3.3.1 of \cite{szego_orthogonal_1975}. We can now use results from \cite{jakeman_2015}.
\begin{lemma}[\cite{jakeman_2015}]
  For all $n \in \N$ and $0 \leq k \leq n-1$, we have
  \begin{align*}
    \sup_{z^i \in [-1,1]} \left|\psi_{k,n}(z^i)\right|^2 \leq C(\gamma^i,\delta^i)
  \end{align*}
\end{lemma}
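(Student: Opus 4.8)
The plan is to unfold the definition \eqref{eq:psi-univariate-def}, so that the quantity to be controlled is $\left|\psi^i_{k,n}(x^i)\right|^2 = n\,\lambda^i_n(x^i)\,(\varphi^i_k(x^i))^2$, and to show that this is $O(1)$ uniformly in $n\in\N$, in $0\le k\le n-1$, and in $x^i\in[-1,1]$. By Lemma \ref{lemma:jacobi-zeros} the quadrature nodes $z^i_j$ all lie in $[-1,1]$, so it suffices to take the supremum over the full interval of orthogonality. One should note at the outset that the crude bound $\lambda^i_n(x^i)\le\lambda^i_{k+1}(x^i)$ together with $\lambda^i_{k+1}(x^i)(\varphi^i_k(x^i))^2\le 1$ only yields $\left|\psi^i_{k,n}(x^i)\right|^2\le n$; the substance of the lemma is that for Jacobi weights the full factor of $n$ is actually recovered, and this recovery is exactly what breaks down once a shape parameter drops below $-\tfrac12$.

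To extract the gain I would invoke two classical families of estimates for orthonormal Jacobi polynomials, in the packaged form supplied by \cite{jakeman_2015} (themselves distillations of the deeper results in the references cited there): (a) a two-sided estimate for the Christoffel function showing, in the variable $x^i=\cos\theta$, that $n\,\lambda^i_n(\cos\theta)$ is comparable to the Jacobi density with its endpoint factors \emph{frozen} at scale $n^{-2}$ away from $\pm1$ — in particular $n\,\lambda^i_n$ is bounded above on $[-1,1]$, with the endpoint singularities of the density suppressed precisely when $\gamma^i,\delta^i\ge-\tfrac12$; and (b) the Bernstein/Szeg\H{o} pointwise bound on $\varphi^i_k(\cos\theta)$, whose envelope in the bulk is the reciprocal of the density times $(1-x^2)^{-1/2}$, and which near an endpoint grows only like the inverse of the corresponding (frozen) density factor. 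The steps are then: (1) write $\left|\psi^i_{k,n}\right|^2 = \bigl[n\lambda^i_n/\widetilde\rho_n\bigr]\cdot\bigl[\widetilde\rho_n\,(\varphi^i_k)^2\bigr]$, where $\widetilde\rho_n$ denotes the frozen density; (2) bound the first bracket by (a); (3) bound the second by (b); (4) partition $[-1,1]$ into the two endpoint layers $\{\theta\lesssim n^{-1}\}$, $\{\pi-\theta\lesssim n^{-1}\}$ and the complementary bulk, and in each region check that every power of $(1\pm x)$ produced by $\widetilde\rho_n$ cancels between the numerator and denominator, while the powers of $n$ cancel thanks to $k\le n-1$; (5) take the supremum, reading off that the resulting constant depends only on $\gamma^i,\delta^i$ through the implied constants in (a) and (b).

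The \textbf{main obstacle} is the endpoint analysis in step (4): in the interesting (non-Chebyshev) Jacobi cases the weighted polynomials genuinely blow up at $\pm1$, and boundedness is restored only after multiplication by $\lambda^i_n$; tracking this cancellation forces one to carry the $n^{-2}$-frozen form of the density rather than the density itself, and the hypothesis $\gamma^i,\delta^i\ge-\tfrac12$ is exactly the borderline case at which the endpoint exponents coming from (a) and (b) add up to a nonnegative power. A secondary nuisance is the mismatch between the degree $n$ appearing in $\lambda^i_n$ and the degree $k\le n-1$ appearing in $\varphi^i_k$; this is harmless, since $\lambda^i_n\le\lambda^i_{k+1}$ reduces the problem to the regime $k\asymp n$ in which (a) and (b) are naturally stated for comparable degrees.
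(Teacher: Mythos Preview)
The paper does not prove this lemma; it is imported wholesale from \cite{jakeman_2015} (note the attribution in the lemma header and the sentence ``We can now use results from \cite{jakeman_2015}'' immediately preceding it). There is therefore no in-paper argument to compare against, and your sketch goes well beyond what the paper itself supplies.

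Your main strategy (steps (1)--(5)) is sound. Writing $|\psi_{k,n}|^2 = [n\lambda_n/\tilde\rho_n]\cdot[\tilde\rho_n\,\varphi_k^2]$ with the frozen density $\tilde\rho_n(x)\sim(1-x+n^{-2})^{\delta+1/2}(1+x+n^{-2})^{\gamma+1/2}$, invoking the two-sided Christoffel estimate for the first bracket and the Szeg\H{o}-type envelope $\varphi_k(x)^2\lesssim(1-x+k^{-2})^{-\delta-1/2}(1+x+k^{-2})^{-\gamma-1/2}$ for the second, leaves you with the ratio $\tilde\rho_n/\tilde\rho_{k}$. Each factor of this ratio is $\le 1$ because $n^{-2}\le k^{-2}$ for $1\le k\le n-1$ and the exponents $\delta+\tfrac12,\gamma+\tfrac12$ are nonnegative precisely under the stated hypothesis. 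That is the whole argument (with $k=0$ treated separately and trivially, since $\varphi_0$ is constant and $n\lambda_n\lesssim\tilde\rho_n$ is bounded on $[-1,1]$).

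One caution about your closing remark: the monotonicity $\lambda_n\le\lambda_{k+1}$ does \emph{not} reduce to the comparable-degree regime in the way you suggest. It yields $n\lambda_n\varphi_k^2\le \tfrac{n}{k+1}\cdot(k+1)\lambda_{k+1}\varphi_k^2$, so even after bounding the diagonal quantity $(k+1)\lambda_{k+1}\varphi_k^2$ you inherit an unbounded factor $n/(k+1)$ when $k$ is small. Fortunately you do not need this reduction: the direct comparison of $\tilde\rho_n$ against the degree-$k$ envelope already absorbs the degree mismatch, exactly because the frozen endpoint factors are monotone in the freezing scale when the exponent is nonnegative. Drop that final sentence and the sketch is clean.
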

Lemma \ref{lemma:jacobi-bound} now follows easily:
\begin{align}\label{eq:jacobi-bound-proof}
  L_i(n) = \max_{0 \leq k \leq n-1} \max_{j=1,\ldots, n} \left| \psi_{k,n}(z^i_j) \right|^2 \leq
  \max_{0 \leq k \leq n-1} \sup_{z^i \in [-1,1]} \left| \psi_{k,n}(z^i) \right|^2 \leq C(\gamma^i, \delta^i).
\end{align}

\subsection{Proof of Lemma \ref{lemma:full-exponential-bound}}
Both of the exponential cases are more subtle. Assume the setup of Lemma \ref{lemma:full-exponential-bound}, that the random variable $X^i$ has a two-sided exponential distribution, $\rho^i\left(x^i\right) = \exp (-|x^i|^\alpha)$ with $\alpha > \frac{3}{2}$. In order to characterize intervals containing the zeros of the associated orthogonal polynomials, we will need the numbers $a^W_n$, which for $n \geq 1$ are given by
\begin{align*}
  a^W_n = \left(n \frac{ \sqrt{\pi} \Gamma\left(\frac{\alpha}{2}\right) }{\Gamma \left(\frac{ \alpha}{2} + \frac{1}{2}\right) } \right)^{1/\alpha}.
\end{align*}
Note that, modulo an $\alpha$-dependent constant, these numbers scale like $n^{1/\alpha}$. The $a_n$ are the \textit{Mhaskar-Rahkmanov-Saff} numbers associated to the weight $\sqrt{\rho^i}$ \cite{mhaskar_where_1985} and play an essential role in the anaylsis of weighted polynomials.
\begin{lemma}[\cite{levin_christoffel_1992}]
  For each $n \in \N$, the $n$-point Gaussian quadrature nodes satisfy:
  \begin{align*}
    \left\{ z^i_j \right\}_{j=1}^{n} \subset \left[ -\widehat{a}^W_n, \widehat{a}^W_n \right],
  \end{align*}
  where $\widehat{a}^W_n$ satisfy
  \begin{align}\label{eq:C-def}
    \widehat{a}^W_n = a_n \left[ 1 + c n^{-2/3} \right],
  \end{align}
  with $c$ a $n$-independent constant.
\end{lemma}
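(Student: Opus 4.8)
The nodes $\{z^i_j\}_{j=1}^n$ are exactly the zeros of the degree-$n$ orthonormal polynomial $\varphi^i_n$ for the weight $\rho^i(x)=\exp(-|x|^\alpha)$. Writing $W(x)=\sqrt{\rho^i(x)}=\exp(-Q(x))$ with $Q(x)=\tfrac12|x|^\alpha$, these are the zeros of the orthonormal polynomial $p_n(W^2,\cdot)$ in the notation of the theory of orthogonal polynomials for exponential (Freud) weights. Since $\alpha>\tfrac32>1$, the exponent $Q$ is even, $C^1$, convex on $(0,\infty)$ with increasing derivative, and $xQ''(x)/Q'(x)=\alpha-1$ is bounded; thus $W$ lies in the class of Freud weights for which the full ``soft-edge'' theory is available. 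The plan is to combine two ingredients: (i) a confinement theorem stating that every zero of $p_n(W^2,\cdot)$ lies in the slightly enlarged Mhaskar--Rahkmanov--Saff (MRS) interval $[-a_n(1+cn^{-2/3}),\,a_n(1+cn^{-2/3})]$, and (ii) an explicit evaluation of the MRS number $a_n$ for $Q=\tfrac12|x|^\alpha$, which I will show equals $a^W_n$ from the statement.

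For (ii), the MRS number $a_n=a_n(Q)$ for the weight $W=e^{-Q}$ is the unique positive solution of
\begin{align*}
  n &= \frac{2}{\pi}\int_0^1 \frac{a_n\,t\,Q'(a_n t)}{\sqrt{1-t^2}}\,dt .
\end{align*}
With $Q'(x)=\tfrac{\alpha}{2}|x|^{\alpha-1}\operatorname{sgn}(x)$ this reduces to $n=\frac{\alpha a_n^\alpha}{\pi}\int_0^1 \frac{t^\alpha}{\sqrt{1-t^2}}\,dt$; substituting $t=\sin\theta$ turns the integral into a Beta integral,
\begin{align*}
  \int_0^1 \frac{t^\alpha}{\sqrt{1-t^2}}\,dt &= \tfrac12 B\!\left(\tfrac{\alpha+1}{2},\tfrac12\right) = \frac{\sqrt{\pi}\,\Gamma\!\left(\tfrac{\alpha+1}{2}\right)}{2\,\Gamma\!\left(\tfrac{\alpha}{2}+1\right)},
\end{align*}
and using $\Gamma(\tfrac{\alpha}{2}+1)=\tfrac{\alpha}{2}\Gamma(\tfrac{\alpha}{2})$ one obtains $a_n^\alpha=n\,\frac{\sqrt{\pi}\,\Gamma(\alpha/2)}{\Gamma(\alpha/2+1/2)}$, i.e.\ $a_n=a^W_n$. (As a sanity check, $\alpha=2$ gives $a^W_n=\sqrt{2n}$, matching the classical location of the extreme Hermite zeros.) Setting $\widehat a^W_n=a^W_n(1+cn^{-2/3})$ and invoking (i) then yields the lemma.

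For (i), the required machinery is the potential-theoretic analysis of weighted Freud polynomials: the equilibrium measure of the external field $Q$ lives on $[-a_n,a_n]$ and vanishes like a square root at $\pm a_n$, the local behaviour of $W^np_n$ near the endpoints is of Airy type with transition window of width $\asymp a_n n^{-2/3}$, and the infinite--finite range inequalities of Mhaskar and Saff force $Wp_n$ to be exponentially small (relative to its supremum) outside $[-a_n(1+cn^{-2/3}),a_n(1+cn^{-2/3})]$; combined with the fact that $\varphi_n$ has exactly $n$ simple real zeros whose normalized counting measure converges to that equilibrium measure, this pins all the zeros into the enlarged interval. Establishing these edge estimates from scratch is the genuinely hard part — it is precisely the content of the deep results on exponential weights in \cite{levin_christoffel_1992} — so in the proof I would simply cite that reference for statement (i) rather than reprove the Airy/potential-theoretic asymptotics; the only remaining work is the routine verification that $Q=\tfrac12|x|^\alpha$ meets the hypotheses of that theory together with the Beta-function computation above.
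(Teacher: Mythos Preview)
Your proposal is correct. The paper does not prove this lemma at all: it is stated with a bare citation to \cite{levin_christoffel_1992} (and the remark that for even $\alpha$ one may take $c=0$, citing \cite{mate_asymptotics_1986}), and is then used as a black box. Your write-up therefore goes strictly beyond what the paper does: you explicitly verify that the MRS number for $Q=\tfrac12|x|^\alpha$ coincides with the $a^W_n$ displayed in the paper via the Beta-integral computation, and you correctly identify that the confinement of the zeros to $[-a_n(1+cn^{-2/3}),a_n(1+cn^{-2/3})]$ is the deep input from the Levin--Lubinsky Freud-weight theory, which you (like the paper) propose to cite rather than reprove. That is exactly the right stance, and your sanity check $a_n=\sqrt{2n}$ at $\alpha=2$ is a nice addition the paper omits.
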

If $\alpha$ is an even integer, we can take $C = 0$ so that $\widehat{a}^W_n = a^W_n$ \cite{mate_asymptotics_1986}. We have bounds for weighted polynomials on the interval defined above.
\begin{lemma}[\cite{jakeman_2015}]
  Let $c > 0$ be as in \eqref{eq:C-def} defining $\widehat{a}^W_n$. For all $n \in \N$ and $0 \leq k \leq n-1$, we have
  \begin{align*}
    \sup_{z^i \in [-\widehat{a}^W_n, \widehat{a}^W_n]} \left| \psi_{k,n}(z^i)\right|^2 \leq C(\alpha) n^{2/3}.
  \end{align*}
\end{lemma}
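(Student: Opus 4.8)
The plan is to prove the bound along the lines of \cite{jakeman_2015}; it comes down to importing a few classical facts about orthogonal polynomials for exponential (Freud-type) weights and combining them. A convenient reformulation: at each Gauss node $z^i_j$ the numbers $\left\{\left|\psi^i_{k,n}\left(z^i_j\right)\right|^2\right\}_{k=0}^{n-1}$ sum to $n$, since $\sum_{k=0}^{n-1}w^i_j\varphi^i_k\left(z^i_j\right)^2 = w^i_j/\lambda^i_n\left(z^i_j\right) = 1$ (equivalently, the rows of $\bs{D^i}$ from Lemma \ref{lemma:D-orthogonal-matrix} are unit vectors). Thus the claim $\left|\psi^i_{k,n}\left(z^i_j\right)\right|^2 \le C(\alpha)n^{2/3}$ is exactly the assertion that at no node is this unit-average mass concentrated on fewer than $\sim n^{1/3}$ of the indices $k$. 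Writing $\left|\psi^i_{k,n}(x)\right|^2 = n\,\varphi^i_k(x)^2/\sum_{\ell=0}^{n-1}\varphi^i_\ell(x)^2$ and multiplying numerator and denominator by $\rho^i(x)$, the task is to show that the ratio of the weighted polynomial $\rho^i(x)\varphi^i_k(x)^2$ to $\sum_{\ell=0}^{n-1}\rho^i(x)\varphi^i_\ell(x)^2$ is $O(n^{-1/3})$ on the relevant interval.

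I would establish this by partitioning $[-\widehat{a}^W_n, \widehat{a}^W_n]$ — which by the preceding lemma contains all of the $z^i_j$ — into a bulk zone $\left\{|x| \le (1-\epsilon)a^W_n\right\}$ and a soft-edge zone near $\pm a^W_n$. In the bulk, the classical two-sided bounds for weighted Freud polynomials control the numerator $\rho^i(x)\varphi^i_k(x)^2$, while the Christoffel-function lower bound gives $\rho^i(x)/\lambda^i_n(x) = \sum_{\ell=0}^{n-1}\rho^i(x)\varphi^i_\ell(x)^2 \gtrsim n/a^W_n$ (the $\asymp n$ indices $\ell$ whose oscillatory interval reaches $x$ contribute comparably); since $a^W_n \asymp n^{1/\alpha}$ this yields $\left|\psi^i_{k,n}(x)\right|^2 \lesssim a^W_n \asymp n^{1/\alpha} \le n^{2/3}$, the last step using $\alpha > \tfrac32$. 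In the soft-edge zone the weighted polynomials are governed by their Airy (Plancherel--Rotach) asymptotics, and the essential point is that near a turning point $|x| \approx a^W_k$ only the $O(n^{1/3})$ indices $\ell$ with $|\ell - k| \lesssim n^{1/3}$ give non-negligible $\rho^i(x)\varphi^i_\ell(x)^2$, and those are mutually comparable (each lies in its own turning-point layer at $x$); hence the ratio above is $\lesssim n^{-1/3}$ and $\left|\psi^i_{k,n}(x)\right|^2 \lesssim n\cdot n^{-1/3} = n^{2/3}$. The hypothesis $\alpha > \tfrac32$ enters both to make $n^{1/\alpha} \le n^{2/3}$ and to keep the low-degree tail ($\ell \ll k$) of the Christoffel sum subdominant; in Lemma \ref{lemma:half-exponential-bound} the weaker requirement $\alpha > \tfrac34$ serves the same purpose, which is why that case is stated separately.

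Every analytic input here — the zone-by-zone bounds on $\rho^i(x)\varphi^i_k(x)^2$, the Christoffel estimates, and the Airy-scale count of how many consecutive $\varphi^i_\ell$ attain comparable values at a point — is precisely what \cite{jakeman_2015} records, drawing on the deeper theory of \cite{levin_christoffel_1992,nevai_generalized_1994,levin_orthogonal_2005,levin_orthogonal_2006} (with the Mhaskar--Rahkmanov--Saff numbers $a^W_n$ and their enlargements $\widehat{a}^W_n$ in \eqref{eq:C-def} coming from \cite{mhaskar_where_1985,mate_asymptotics_1986}). I expect the soft-edge analysis to be the only genuine obstacle: in the bulk the crude pointwise bounds on individual weighted polynomials already suffice, but near a turning point no estimate on a single $\varphi^i_k$ improves on the trivial $\left|\psi^i_{k,n}\right|^2 \le n$, and one must use the quantitative Airy-scale description of the local distribution of mass among the $\varphi^i_\ell$. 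Finally, once the displayed bound holds for all $0 \le k \le n-1$, Lemma \ref{lemma:full-exponential-bound} follows exactly as in \eqref{eq:jacobi-bound-proof}: the nodes $z^i_j$ all lie in $[-\widehat{a}^W_n, \widehat{a}^W_n]$, so $L_i(n) = \max_k \max_j \left|\psi^i_{k,n}(z^i_j)\right|^2 \le \sup_{x \in [-\widehat{a}^W_n, \widehat{a}^W_n]} \max_k \left|\psi^i_{k,n}(x)\right|^2 \le C(\alpha)n^{2/3}$.
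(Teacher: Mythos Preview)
The paper does not supply a proof of this lemma at all: it is stated with attribution to \cite{jakeman_2015} and immediately used to deduce Lemma~\ref{lemma:full-exponential-bound} via the one-line argument \eqref{eq:jacobi-bound-proof}. Your proposal therefore goes considerably further than the paper, which treats the bound as an imported black box (the surrounding text explicitly says the conclusions in \cite{jakeman_2015} are a reshuffling of the deeper results in \cite{levin_christoffel_1992,nevai_generalized_1994,levin_orthogonal_2005,levin_orthogonal_2006}).

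Your sketch is a reasonable outline of how such a bound is actually obtained. The reformulation $\left|\psi^i_{k,n}\right|^2 = n\,\rho^i\varphi^i_k{}^2 \big/ \sum_{\ell<n}\rho^i\varphi^i_\ell{}^2$ and the observation that the row sums equal $n$ are both correct and helpful. The bulk argument is essentially right: the Christoffel lower bound $\rho^i/\lambda^i_n \gtrsim n/a^W_n$ together with the pointwise weighted-polynomial bounds gives $\left|\psi^i_{k,n}\right|^2 \lesssim a^W_n \asymp n^{1/\alpha} \le n^{2/3}$, and you correctly identify $\alpha > 3/2$ as the source of the exponent. The soft-edge description is slightly loose --- near $|x|\approx a^W_n$ it is the indices $\ell$ within $O(n^{1/3})$ of $n$ (rather than of $k$) that dominate the Christoffel sum, and the matching edge behaviour of $\lambda^i_n$ (the factor $\sim n^{2/3}/a^W_n$ replacing $n/a^W_n$) is what produces the $n^{2/3}$ --- but the Plancherel--Rotach/Airy mechanism you invoke is the right one, and the references you cite are exactly where these estimates live. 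In short: the paper offers no proof to compare against, and your plan is a faithful outline of the argument behind the cited result.
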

With these two lemmas, essentially the same argument as in \eqref{eq:jacobi-bound-proof} can be used to establish,
\begin{align*}
  L_i(n) = \max_{0 \leq k \leq n-1} \max_{j=1,\ldots, n} \left| \psi_{k,n}(z^i_j) \right|^2 \leq C n^{2/3},
\end{align*}
which is Lemma \ref{lemma:full-exponential-bound}.

\subsection{Proof of Lemma \ref{lemma:half-exponential-bound}}
  Assume the setup of Lemma \ref{lemma:half-exponential-bound}, that the random variable $X^i$ has a one-sided exponential distribution with density $\rho^i\left(x^i\right) \propto \exp(-|x^i|^\alpha)$ with $\alpha > \frac{3}{4}$.

    In this half-line case, both the strategy and the results are essentially the same as with the whole real line in the previous section. One major change is in the constants $a_K$, which in this case are given by
\begin{align*}
  a^H_n = \left(n \frac{ \sqrt{\pi} \Gamma\left(\alpha\right) }{\Gamma \left(\alpha + \frac{1}{2}\right) } \right)^{1/\alpha}.
\end{align*}
Note again that these numbers scale like $n^{1/\alpha}$. These $a^H_n$ are the \textit{Mhaskar-Rahkmanov-Saff} numbers associated to the half-line weight $\sqrt{\rho^i}$.
\begin{lemma}[\cite{levin_christoffel_1992}]
  For each $n \in \N$ with $n \geq 1$, the $n$-point Gaussian quadrature nodes satisfy:
  \begin{align*}
    \left\{ z^i_j \right\}_{j=1}^{n} \subset \left[ 0, \widehat{a}^H_n \right],
  \end{align*}
  where $\widehat{a}^H_n$ satisfy
  \begin{align}\label{eq:C-def-half}
    \widehat{a}^H_n = a_n \left[ 1 + c n^{-2/3} \right],
  \end{align}
  with $c$ a $n$-independent constant.
\end{lemma}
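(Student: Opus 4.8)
The lemma has two halves, and the first is essentially free: the quadrature nodes $z^i_j$ are precisely the zeros of the degree-$n$ orthonormal polynomial $\varphi^i_n$ associated with a measure supported on $[0,\infty)$, and the zeros of orthogonal polynomials always lie in the interior of the convex hull of the support of the orthogonality measure, so $z^i_j>0$ for every $j$. The real content is therefore the upper bound $z^i_j\le\widehat{a}^H_n$, and the plan is to run exactly the argument used for the whole-line weight recorded in \eqref{eq:C-def}, only now with the half-line Mhaskar--Rakhmanov--Saff (MRS) theory in place of its symmetric analogue.

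The first step is to recall that, for the half-line weight $w=\sqrt{\rho^i}=\exp(-\tfrac12 x^\alpha)$ on $[0,\infty)$, the MRS number $a^H_n$ is defined by the standard equilibrium (``contact'') condition, and that for the pure power $Q(x)=\tfrac12 x^\alpha$ this defining integral evaluates in closed form, producing exactly $a^H_n=\bigl(n\,\sqrt{\pi}\,\Gamma(\alpha)/\Gamma(\alpha+\tfrac12)\bigr)^{1/\alpha}$; this is the only place the explicit density is used. The Mhaskar--Saff infinite--finite range inequality then says that for any polynomial $P$ of degree at most $n$, $|P(x)|w(x)$ is controlled by its values on $[0,a^H_n]$ and decays super-exponentially for $x$ well to the right of $a^H_n$. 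Applied to $P=\varphi^i_n$, this already confines all zeros to a neighbourhood of $[0,a^H_n]$, up to lower-order error.

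To sharpen ``neighbourhood'' into the stated window $[0,\,a^H_n(1+cn^{-2/3})]$ with an $n$-independent $c$, the plan is to invoke the local (``soft edge'') description of weighted orthogonal polynomials near the right endpoint of the equilibrium support: there $\varphi^i_n w$ undergoes an Airy-type transition of width of order $a^H_n n^{-2/3}$, and past the right edge of that transition layer $\varphi^i_n w$ (hence $\varphi^i_n$) has no sign changes; since every zero is a sign change, all zeros must lie to the left of that edge, which is precisely $z^i_j\le a^H_n(1+cn^{-2/3})$ — and when $\alpha$ is an even integer the whole-line value of $c$ works verbatim, as in \cite{mate_asymptotics_1986}. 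Equivalently one can argue through sharp Christoffel-function estimates using the quadrature identity $w^i_j=\lambda^i_n(z^i_j)$. The main obstacle is exactly this $n^{-2/3}$ correction: the crude infinite--finite range inequality only localizes the zeros up to lower-order terms, and extracting the sharp edge width $a^H_n n^{-2/3}$ requires the delicate Airy-regime asymptotics for Erd\H{o}s-type exponential weights on the half-line, which is the technical content of \cite{levin_christoffel_1992} (resting on \cite{mhaskar_where_1985}); the hypothesis $\alpha>\tfrac34$ is the regularity threshold under which those half-line estimates are available, so in the paper I would cite these results directly rather than reconstruct the edge analysis.
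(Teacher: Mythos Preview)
The paper does not prove this lemma at all: it is stated with the citation \cite{levin_christoffel_1992} and used as a black-box input to the surrounding argument. Your proposal is therefore strictly more than what the paper does --- you supply a plausible outline of the underlying mechanism (positivity of zeros from the support of the measure, MRS infinite--finite range to localize near $[0,a^H_n]$, Airy soft-edge asymptotics to extract the $n^{-2/3}$ window) --- and then, in your last sentence, you land exactly where the paper does: cite \cite{levin_christoffel_1992} rather than reconstruct the edge analysis. So your endpoint matches the paper's treatment; the sketch preceding it is bonus exposition the paper omits.

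One small slip: your parenthetical that ``when $\alpha$ is an even integer the whole-line value of $c$ works verbatim, as in \cite{mate_asymptotics_1986}'' imports a remark the paper makes only in the \emph{whole-line} subsection (for $\widehat{a}^W_n$), not here for the half-line $\widehat{a}^H_n$. The even-integer improvement to $c=0$ is a symmetric-weight statement and does not transfer to the one-sided setting without separate justification, so that aside should be dropped.
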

We have bounds for weighted polynomials on the interval defined above.
\begin{lemma}[\cite{jakeman_2015}]
  Let $C > 0$ be as in \eqref{eq:C-def-half} defining $\widehat{a}^H_n$. For all $n \in \N$ and $0 \leq k \leq n-1$, we have
  \begin{align*}
    \sup_{z^i \in [0, \widehat{a}^H_n]} \left| \psi_{k,n}(z^i)\right|^2 \leq C(\alpha) n^{2/3}.
  \end{align*}
\end{lemma}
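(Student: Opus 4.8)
The statement is a fact about orthonormal polynomials for half-line exponential (Laguerre-type) weights; as the appendix notes, it is taken verbatim from \cite{jakeman_2015}, which in turn distills the much deeper machinery of \cite{levin_christoffel_1992,nevai_generalized_1994,levin_orthogonal_2005,levin_orthogonal_2006}. The route I would take to establish it from that machinery is as follows. Write $\rho^i(x)\propto\exp(-x^\alpha)$ on $[0,\infty)$ and $W(x)=\sqrt{\rho^i(x)}$, so the $\varphi_k$ are orthonormal with respect to $W^2$. Unwinding \eqref{eq:psi-univariate-def} together with \eqref{eq:quadratureweight}--\eqref{eq:quadraturelamda}, for any $x$ in the support interval
\begin{align*}
  \left|\psi_{k,n}(x)\right|^2 \;=\; n\,\lambda_n(x)\,\varphi_k(x)^2 \;=\; \Bigl[\,n\,\lambda_n(x)\,W(x)^{-2}\,\Bigr]\cdot\Bigl[\,\bigl(\varphi_k(x)\,W(x)\bigr)^2\,\Bigr],
\end{align*}
so the task reduces to bounding, uniformly for $x\in[0,\widehat{a}^H_n]$ and $0\le k\le n-1$, the product of the weight-normalized Christoffel factor and the squared weighted orthonormal polynomial.

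The two ingredients I would import are: (a) the Christoffel-function estimate for Laguerre-type exponential weights, namely $n\,\lambda_n(x)\,W(x)^{-2}\lesssim a^H_n$ on the Mhaskar--Rakhmanov--Saff interval $[0,a^H_n]$ (with the bound degrading to $\asymp 1$ as $x$ approaches the hard endpoint $0$, and staying controlled on the slightly enlarged interval $[0,\widehat{a}^H_n]$); and (b) the sup-norm bound $\sup_{x\ge 0}\bigl|\varphi_k(x)\,W(x)\bigr|\le C(\alpha)$, uniform in $k$, the maximum being attained near the hard endpoint, where a Bessel-type regime governs the weighted polynomial. Multiplying (a) and (b) crudely would give only $\left|\psi_{k,n}(x)\right|^2\lesssim a^H_n\asymp n^{1/\alpha}$, which is at most $n^{2/3}$ only when $\alpha\ge\tfrac32$.

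This is exactly where the half-line case genuinely departs from the two-sided case of Lemma \ref{lemma:full-exponential-bound}: there, $\alpha>\tfrac32$ makes the crude product already sufficient, whereas here one must cover $\tfrac34<\alpha\le\tfrac32$, for which $n^{1/\alpha}$ exceeds $n^{2/3}$ and the crude multiplication is too lossy. The resolution, and the main obstacle to write out carefully, is that the two factors attain their extreme values at \emph{different} locations in $[0,a^H_n]$: the weighted polynomial $(\varphi_k W)^2$ is of size $\asymp 1$ only near the hard endpoint $x=0$, where the Christoffel factor is merely $\asymp 1$ rather than $\asymp a^H_n$; and where the Christoffel factor is of order $a^H_n$ (deep in the bulk) the weighted polynomial of degree $k$ is comparably small unless $k$ is comparable to $n$, in which case its soft-edge Airy peak contributes a factor $\asymp n^{1/3}$. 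A location-by-location argument, using the sharper Levin--Lubinsky estimates that carry the endpoint-vanishing correction factors $|1-x/a^H_n|^{1/4}$, then yields the uniform bound $\left|\psi_{k,n}(x)\right|^2\lesssim C(\alpha)\,n^{2/3}$, with $\alpha>\tfrac34$ being the threshold that places $\rho^i$ in the admissible weight class and controls the $x=0$ endpoint.

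(An alternative route I would keep in mind is the substitution $x=t^2$, which turns $\exp(-x^\alpha)$ on $[0,\infty)$ into the symmetric full-line weight $\propto\exp(-|t|^{2\alpha})$ and explains why the admissibility threshold drops from $\tfrac32$ to $\tfrac34$; since the resulting generalized-Hermite weight still carries an endpoint factor, I would instead follow the direct half-line estimates, as indicated earlier.) Finally, since the $n$ Gaussian nodes $z^i_j$ all lie in $[0,\widehat{a}^H_n]$ by the zero-localization lemma stated just above, restricting to these nodes and maximizing over $j$ and $k$ gives the displayed inequality, and hence, by the same one-line argument as in \eqref{eq:jacobi-bound-proof}, the bound $L_i(n)\le C(\alpha)\,n^{2/3}$ of Lemma \ref{lemma:half-exponential-bound}.
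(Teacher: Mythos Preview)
The paper does not prove this lemma: it is stated as a direct citation of \cite{jakeman_2015}, with the surrounding text explicitly noting that the underlying estimates come from \cite{levin_christoffel_1992,nevai_generalized_1994,levin_orthogonal_2005,levin_orthogonal_2006}. Immediately after stating it, the paper simply combines it with the zero-localization lemma via the one-line argument \eqref{eq:jacobi-bound-proof} to obtain Lemma~\ref{lemma:half-exponential-bound}. So there is no ``paper's own proof'' to compare against; you have gone further than the paper by sketching how the cited bound would be derived from the Levin--Lubinsky machinery.

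Your outline is broadly on the right track---the decomposition $|\psi_{k,n}|^2 = \bigl[n\lambda_n W^{-2}\bigr]\cdot\bigl[(\varphi_k W)^2\bigr]$ is exactly the device used in \cite{jakeman_2015}, and you correctly flag that the crude product only yields $n^{1/\alpha}$, which is insufficient in the range $\tfrac34<\alpha\le\tfrac32$. Your diagnosis that the two factors peak at different locations (hard edge versus bulk/soft edge) and that one must track the endpoint correction factors such as $|1-x/a^H_n|^{1/4}$ is also the right idea. That said, the ``location-by-location argument'' you describe is where all the actual work lies, and your sketch does not carry it out: turning those heuristics into a uniform $n^{2/3}$ bound requires the precise two-sided Christoffel-function asymptotics and the pointwise $\varphi_k W$ estimates (including the Bessel regime near $x=0$ and the Airy regime near the soft edge) from \cite{levin_orthogonal_2005,levin_orthogonal_2006}. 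Your alternative route via $x=t^2$ to reduce to a generalized-Hermite weight with exponent $2\alpha$ is indeed the cleanest explanation of why the threshold drops from $\tfrac32$ to $\tfrac34$, though as you note the resulting weight carries a $|t|$ factor and so is not covered by Lemma~\ref{lemma:full-exponential-bound} as stated. In short: your proposal is a reasonable roadmap, but it remains a roadmap; the paper itself simply invokes the citation and moves on.
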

Again the same argument as in \eqref{eq:jacobi-bound-proof} can be used to establish,
\begin{align*}
  L_i(n) = \max_{0 \leq k \leq n-1} \max_{j=1,\ldots, n} \left| \psi_{k,n}(z^i_j) \right|^2 \leq C n^{2/3},
\end{align*}
which is Lemma \ref{lemma:half-exponential-bound}.

\subsection{Proof of Theorem \ref{thm:main-sample-count}}\label{sec:main-theorem}

Under the assumptions of this theorem, we perform the algorithm given in Section \ref{sec:gpc-stuff:algorithm}. Recalling the notation, we have a multi-index set $\Lambda \subset \Lambda^P_{\bs{n}-\bs{1}}$ for some multi-index $\bs{n}$. The index set $\Lambda$ has size $N$. We construct the tensor-product Gauss quadrature rule with $n_i$ points in dimension $i$. Then the rectangular $\left(\prod_{i=1}^d n_i\right) \times N$ weighted Vandermonde-like matrix $\bs{A}$ is defined, whose entries are
\begin{align}\label{eq:A-def}
  (A)_{p,q} &= \sqrt{w_{\bs{\ell}(p)}} \phi_{\bs{k}(q)}\left( \bs{z}_{\bs{\ell}(p)}\right), & 1 &\leq p, q \leq \prod_{i=1}^d n_i
\end{align}
where $\bs{k}(q)$ and $\bs{\ell}(p)$ represent any enumeration of the elements in $\Lambda^P_{\bs{n} - \bs{1}}$. The matrix $\bs{A}$ is an orthogonal matrix. (See Lemmas \ref{lemma:general-psi-orthonormality} or \ref{lemma:D-orthogonality}, and Definition \ref{def:rectangular-orthogonal-matrix}.) According to Section \ref{sec:gpc-stuff:algorithm}, $\bs{D}$ is formed by subsampling rows from $\bs{A}$.

We subsample rows from $\bs{A}$ without replacement. This does \textit{not} sample iid from $\nu_{\bs{n}}$ since the samples are dependent. Thus, Theorem \ref{RIP_bos} cannot be used directly to analyze this procedure. Nevertheless, the analysis may be amended to include this type of procedure; see Corollary 12.38 in Section 12.6 of \cite{rauhut_mathematical_2013} and the surroundng discussion.

\begin{lemma}[\cite{rauhut_mathematical_2013}]\label{lemma:row-subsample}
  Suppose $\bs{A}$ is an orthogonal matrix whose entries are bounded by $\frac{1}{\sqrt{\prod_{i=1}^d n_i}} \sqrt{L}$. Suppose we subsample $M$ rows from $\bs{A}$. Then under the sample count condition \eqref{num_RIP}, the subsampled matrix satisfies the conclusions of Theorem \ref{RIP_bos}.
\end{lemma}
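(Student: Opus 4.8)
The plan is to reduce the stated claim to a known result in the compressive sensing literature, since Lemma \ref{lemma:row-subsample} is an assembly of existing theory rather than something to be proven from scratch. The essential point is that sampling $M$ rows from $\bs{A}$ \emph{without replacement} is a slightly different sampling model than the iid sampling assumed in Theorem \ref{RIP_bos}, and the goal is to argue that the RIP conclusion survives this change. First I would normalize the hypotheses: since $\bs{A}$ is orthogonal, its columns are $\nu_{\bsn}$-orthonormal discrete functions (this is exactly Lemma \ref{lemma:general-psi-orthonormality}), and the entrywise bound $|(A)_{p,q}| \leq \frac{1}{\sqrt{\prod_i n_i}}\sqrt{L}$ translates into the statement that the underlying orthonormal functions $\psi_{\bsk,\bsn}$ form a bounded orthonormal system with constant $L$ in the sense of \eqref{bos}. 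Thus the rescaled matrix $\sqrt{\prod_i n_i}\,\bs{A}$ is the full design matrix of a discrete bounded orthonormal system sampled over all $\prod_i n_i$ quadrature points.

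The key step is then to invoke the without-replacement sampling result from the compressive sensing reference. I would cite Corollary 12.38 (and the surrounding discussion in Section 12.6) of \cite{rauhut_mathematical_2013}, which establishes that for a bounded orthonormal system, subsampling $M$ rows uniformly \emph{without} replacement yields the same RIP guarantee as iid (with-replacement) sampling, up to the same sample-count condition \eqref{num_RIP}. The mechanism there is a comparison argument: the without-replacement design is shown to concentrate at least as well as the with-replacement design, so the RIP bound for the iid case transfers directly. I would spell out the dictionary between the paper's notation and that reference, identifying $N = |\Lambda|$ columns, the candidate pool of size $\prod_i n_i$, and the boundedness constant $L$, so that the condition \eqref{num_RIP} of Theorem \ref{RIP_bos} is exactly the hypothesis invoked.

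Concretely, the steps in order are: (i) observe that $\bs{A}$'s orthogonality plus the entrywise bound means $\sqrt{\prod_i n_i}\,\bs{A}$ is the full sampling matrix of a discrete bounded orthonormal system with constant $L$; (ii) note that selecting $M$ rows uniformly without replacement and rescaling by $1/\sqrt{M}$ is precisely the discrete-BOS subsampling model of \cite[Sec.~12.6]{rauhut_mathematical_2013}; (iii) apply Corollary 12.38 there to conclude that under \eqref{num_RIP} the rescaled subsampled matrix has RIC $\delta_s \leq \delta$ with the stated high probability; (iv) identify this with the conclusion of Theorem \ref{RIP_bos}. The main obstacle is step (iii): ensuring that the without-replacement concentration argument applies verbatim, since Theorem \ref{RIP_bos} as stated assumes iid samples. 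The substance of the lemma is exactly this transfer, so the proof is essentially a careful citation establishing that the dependence introduced by sampling without replacement does not degrade the RIP estimate. Since the without-replacement case is handled by a standard negative-association / domination comparison in \cite{rauhut_mathematical_2013}, I would keep the argument brief, emphasizing the translation of hypotheses and deferring the concentration machinery to the reference.
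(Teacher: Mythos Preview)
Your proposal is correct and matches the paper's approach exactly: the paper does not prove this lemma either, but simply cites Corollary 12.38 in Section 12.6 of \cite{rauhut_mathematical_2013} for the without-replacement RIP guarantee, which is precisely the reference you invoke. Your steps (i)--(iv) spell out the dictionary between the paper's setup and that corollary more carefully than the paper itself does, but the substance is identical.
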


We then must determine how to bound the entries of the matrix $\bs{A}$. Assuming that the marginal components of the random variable $\bs{X}$ each have a distribution satisfying any of the conditions in Lemmas \ref{lemma:bounds}, then the entries of $\bs{A}$, defined by \eqref{eq:A-def}, satisfy
\begin{align*}
\left(\prod_{i=1}^d \sqrt{n_i}\right) \left|\left(A\right)_{p,q}\right| &= \left(\prod_{i=1}^d \sqrt{n_i}\right) \sqrt{w_{\bs{\ell}(p)}} \left|\phi_{\bs{k}(q)}\left( \bs{z}_{\bs{\ell}(p)} \right)\right| \\
                                                             &\stackrel{\eqref{gpcbais},\eqref{eq:tp-weights}}{=} \prod_{i=1}^d \sqrt{n_i w^i_{\ell(p)_i}} \left|\varphi^i_{k(q)_i}\left( z^i_{\ell(p)}\right)\right| \\
                                                             &\stackrel{\eqref{eq:quadratureweight}}{=} \prod_{i=1}^d \sqrt{n_i \lambda^i_{n_i}\left(z^i_{\ell(p)}\right)} \left|\varphi^i_{k(q)_i}\left( z^i_{\ell(p)}\right)\right| \\
                                                             &\stackrel{\eqref{eq:psi-univariate-def}}{=} \prod_{i=1}^d \left|\psi_{\ell(p)_i,n_i}\right| \\
                                                             &\stackrel{\textrm{Lemmas }\ref{lemma:bounds}}{\leq} \prod_{i=1}^d \sqrt{L_i(n_i)},
\end{align*}
with the individual $L_i$ factors given by the bounds in Lemmas \ref{lemma:bounds} depending on the distribution of $\bs{X}$. Thus,
\begin{align*}
  \sup_{p,q} \left(\prod_{i=1}^d n_i\right) \left(A\right)^2_{p,q} \leq L(\bs{n})
\end{align*}
Thus, under condition \eqref{eq:sample-count-criterion}, then Lemma \ref{lemma:row-subsample} implies the conclusion of Theorem \ref{thm:main-sample-count}.

\bibliographystyle{abbrv}
\bibliography{cs_gaussian1212}

\begin{thebibliography}{10}

\bibitem{Agarwal2009}
N.~Agarwal and N.~R. Aluru.
\newblock A domain adaptive stochastic collocation approach for analysis of
  {MEMS} under uncertainties.
\newblock {\em Journal of Computational Physics}, 228(20):7662--7688, Nov.
  2009.

\bibitem{BNT}
I.~Babuška, F.~Nobile, and R.~Tempone.
\newblock A {Stochastic} {Collocation} {Method} for {Elliptic} {Partial}
  {Differential} {Equations} with {Random} {Input} {Data}.
\newblock {\em SIAM Review}, 52(2):317--355, Jan. 2010.

\bibitem{bieri}
M.~Bieri.
\newblock A {Sparse} {Composite} {Collocation} {Finite} {Element} {Method} for
  {Elliptic} {SPDEs}.
\newblock {\em SIAM Journal on Numerical Analysis}, 49(6):2277--2301, Jan.
  2011.

\bibitem{schwab1}
M.~Bieri, R.~Andreev, and C.~Schwab.
\newblock Sparse {Tensor} {Discretization} of {Elliptic} {sPDEs}.
\newblock {\em SIAM Journal on Scientific Computing}, 31(6):4281--4304, Dec.
  2009.

\bibitem{Cai+Wang+Xu_2010}
T.~Cai, L.~Wang, and G.~Xu.
\newblock New {Bounds} for {Restricted} {Isometry} {Constants}.
\newblock {\em IEEE Transactions on Information Theory}, 56(9):4388--4394,
  Sept. 2010.

\bibitem{CandesTao_2005}
E.~Candes and T.~Tao.
\newblock Decoding by linear programming.
\newblock {\em IEEE Transactions on Information Theory}, 51(12):4203--4215,
  Dec. 2005.

\bibitem{candes1}
E.~Candes and T.~Tao.
\newblock Near-{Optimal} {Signal} {Recovery} {From} {Random} {Projections}:
  {Universal} {Encoding} {Strategies}?
\newblock {\em IEEE Transactions on Information Theory}, 52(12):5406--5425,
  Dec. 2006.

\bibitem{candes2}
E.~J. Candès, J.~K. Romberg, and T.~Tao.
\newblock Stable signal recovery from incomplete and inaccurate measurements.
\newblock {\em Communications on Pure and Applied Mathematics},
  59(8):1207--1223, Aug. 2006.

\bibitem{CCMNT}
A.~Chkifa, A.~Cohen, G.~Migliorati, F.~Nobile, and R.~Tempone.
\newblock Discrete least squares polynomial approximation with random
  evaluations - application to parametric and stochastic elliptic {PDE}s.
\newblock {\em ESAIM:M2AN}, 49(3):815--837, 2015.

\bibitem{CDS_2011}
A.~Cohen, R.~Devore, and C.~Schwab.
\newblock Analytic regularity and polynomial approximation of parametric and
  stochastic elliptic {PDE}s.
\newblock {\em Anal. Appl.}, 9(01):1--10, 2011.

\bibitem{Donoho_2006}
D.~Donoho.
\newblock Compressed sensing.
\newblock {\em IEEE Transactions on Information Theory}, 52(4):1289--1306, Apr.
  2006.

\bibitem{donoho2}
D.~L. Donoho and M.~Elad.
\newblock On the stability of the basis pursuit in the presence of noise.
\newblock {\em Signal Processing}, 86(3):511--532, Mar. 2006.

\bibitem{doostan1}
A.~Doostan and H.~Owhadi.
\newblock A non-adapted sparse approximation of {PDEs} with stochastic inputs.
\newblock {\em Journal of Computational Physics}, 230(8):3015--3034, Apr. 2011.

\bibitem{eldred}
M.~Eldred.
\newblock Recent {Advances} in {Non}-{Intrusive} {Polynomial} {Chaos} and
  {Stochastic} {Collocation} {Methods} for {Uncertainty} {Analysis} and
  {Design}.
\newblock In {\em 50th {AIAA}/{ASME}/{ASCE}/{AHS}/{ASC} {Structures},
  {Structural} {Dynamics}, and {Materials} {Conference}}, Structures,
  {Structural} {Dynamics}, and {Materials} and {Co}-located {Conferences}.
  American Institute of Aeronautics and Astronautics, May 2009.

\bibitem{ernst_convergence_2012}
O.~G. Ernst, A.~Mugler, H.-j. Starkloff, and E.~Ullmann.
\newblock On the {Convergence} of {Generalized} {Polynomial} {Chaos}
  {Expansions}.
\newblock {\em ESAIM: Mathematical Modelling and Numerical Analysis},
  46(02):317--339, 2012.

\bibitem{FWK}
J.~Foo, X.~Wan, and G.~E. Karniadakis.
\newblock The multi-element probabilistic collocation method ({ME}-{PCM}):
  {Error} analysis and applications.
\newblock {\em Journal of Computational Physics}, 227(22):9572--9595, Nov.
  2008.

\bibitem{GZ}
B.~Ganapathysubramanian and N.~Zabaras.
\newblock Sparse grid collocation schemes for stochastic natural convection
  problems.
\newblock {\em Journal of Computational Physics}, 225(1):652--685, July 2007.

\bibitem{gautschi_condition_1983}
W.~Gautschi.
\newblock The condition of {Vandermonde}-like matrices involving orthogonal
  polynomials.
\newblock {\em Linear Algebra and its Applications}, 52-53:293--300, July 1983.

\bibitem{Ghanem}
R.~G. Ghanem and P.~D. Spanos.
\newblock {\em Stochastic finite elements: a spectral approach}.
\newblock Springer-Verlag New York, Inc., 1991.

\bibitem{doostan2}
J.~Hampton and A.~Doostan.
\newblock Compressive sampling of polynomial chaos expansions: {Convergence}
  analysis and sampling strategies.
\newblock {\em Journal of Computational Physics}, 280:363--386, Jan. 2015.

\bibitem{jakeman_2015}
J.~Jakeman, A.~Narayan, and T.~Zhou.
\newblock A generalized sampling and preconditioning scheme for sparse
  approximation of polynomial chaos expansions.
\newblock arXiv:1602.06879.

\bibitem{levin_christoffel_1992}
A.~L. Levin and D.~S. Lubinsky.
\newblock Christoffel functions, orthogonal polynomials, and {Nevai}'s
  conjecture for {Freud} weights.
\newblock {\em Constructive Approximation}, 8(4):463--535, Dec. 1992.

\bibitem{levin_orthogonal_2005}
E.~Levin and D.~Lubinsky.
\newblock Orthogonal polynomials for exponential weights on.
\newblock {\em Journal of Approximation Theory}, 134(2):199--256, June 2005.

\bibitem{levin_orthogonal_2006}
E.~Levin and D.~Lubinsky.
\newblock Orthogonal polynomials for exponential weights on , {II}.
\newblock {\em Journal of Approximation Theory}, 139(1�):107--143, Mar. 2006.

\bibitem{Li_Zhang}
H.~Li and D.~Zhang.
\newblock Probabilistic collocation method for ow in porous media: comparisons
  with other stochastic methods.
\newblock {\em Water Resources Research}, 43:W09409, 2007.

\bibitem{MaZabaras}
X.~Ma and N.~Zabaras.
\newblock An adaptive hierarchical sparse grid collocation algorithm for the
  solution of stochastic differential equations.
\newblock {\em Journal of Computational Physics}, 228(8):3084--3113, May 2009.

\bibitem{mhaskar_where_1985}
H.~N. Mhaskar and E.~B. Saff.
\newblock Where does the sup norm of a weighted polynomial live?
\newblock {\em Constructive Approximation}, 1(1):71--91, Dec. 1985.

\bibitem{mate_asymptotics_1986}
A.~MáTé, P.~Nevai, and V.~Totik.
\newblock Asymptotics for the {Greatest} {Zeros} of {Orthogonal} {Polynomials}.
\newblock {\em SIAM Journal on Mathematical Analysis}, 17(3):745--751, May
  1986.

\bibitem{NJZ_MC}
A.~Narayan, J.~Jakeman, and T.~Zhou.
\newblock A christoffel function weighted least squares algorithm for
  collocation approximations.
\newblock {\em to appear in Math. Comput.}, 2016.

\bibitem{narayan_stochastic_2012}
A.~Narayan and D.~Xiu.
\newblock Stochastic {Collocation} {Methods} on {Unstructured} {Grids} in
  {High} {Dimensions} via {Interpolation}.
\newblock {\em SIAM Journal on Scientific Computing}, 34(3):A1729--A1752, June
  2012.

\bibitem{narayan_stochastic_2015}
A.~Narayan and T.~Zhou.
\newblock Stochastic {Collocation} on {Unstructured} {Multivariate} {Meshes}.
\newblock {\em Communications in Computational Physics}, 18(01):1--36, July
  2015.

\bibitem{nevai_generalized_1994}
P.~Nevai, T.~Erdélyi, and A.~Magnus.
\newblock Generalized {Jacobi} {Weights}, {Christoffel} {Functions}, and
  {Jacobi} {Polynomials}.
\newblock {\em SIAM Journal on Mathematical Analysis}, 25(2):602--614, Mar.
  1994.

\bibitem{FabioC3}
F.~Nobile, R.~Tempone, and C.~G. Webster.
\newblock An {Anisotropic} {Sparse} {Grid} {Stochastic} {Collocation} {Method}
  for {Partial} {Differential} {Equations} with {Random} {Input} {Data}.
\newblock {\em SIAM Journal on Numerical Analysis}, 46(5):2411--2442, Jan.
  2008.

\bibitem{Rauhut}
H.~Rauhut.
\newblock Compressive {Sensing} and {Structured} {Random} {Matrices} :.
\newblock In M.~Fornasier, editor, {\em Theoretical {Foundations} and
  {Numerical} {Methods} for {Sparse} {Recovery}}, pages 1--92. 2010.

\bibitem{rauhut_mathematical_2013}
H.~Rauhut and S.~Foucart.
\newblock {\em A {Mathematical} {Introduction} to {Compressive} {Sensing}}.
\newblock Birkhäuser, New York, 2013 edition edition, Aug. 2013.

\bibitem{RauhutWard}
H.~Rauhut and R.~Ward.
\newblock Sparse {Legendre} expansions via -minimization.
\newblock {\em Journal of Approximation Theory}, 164(5):517--533, May 2012.

\bibitem{NajmGhanem}
M.~T. Reagan, H.~N. Najm, R.~G. Ghanem, and O.~M. Knio.
\newblock Uncertainty quantification in reacting-flow simulations through
  non-intrusive spectral projection.
\newblock {\em Combustion and Flame}, 132(3):545--555, Feb. 2003.

\bibitem{szego_orthogonal_1975}
G.~Szeg{\"o}.
\newblock {\em Orthogonal {Polynomials}}.
\newblock American Mathematical Soc., 4th edition, 1975.

\bibitem{TangIa}
G.~Tang and G.~Iaccarino.
\newblock Subsampled {Gauss} {Quadrature} {Nodes} for {Estimating} {Polynomial}
  {Chaos} {Expansions}.
\newblock {\em SIAM/ASA Journal on Uncertainty Quantification}, 2(1):423--443,
  Jan. 2014.

\bibitem{Tang_zhou_2014}
T.~Tang and T.~Zhou.
\newblock On {Discrete} {Least}-{Squares} {Projection} in {Unbounded} {Domain}
  with {Random} {Evaluations} and its {Application} to {Parametric}
  {Uncertainty} {Quantification}.
\newblock {\em SIAM Journal on Scientific Computing}, pages A2272--A2295, Jan.
  2014.

\bibitem{tibs}
R.~Tibshirani.
\newblock Regression {Shrinkage} and {Selection} via the {Lasso}.
\newblock {\em Journal of the Royal Statistical Society. Series B
  (Methodological)}, 58(1):267--288, Jan. 1996.

\bibitem{schwab2}
R.~A. Todor and C.~Schwab.
\newblock Convergence rates for sparse chaos approximations of elliptic
  problems with stochastic coefficients.
\newblock {\em IMA Journal of Numerical Analysis}, 27(2):232--261, Apr. 2007.

\bibitem{vanFrie2}
E.~van~den Berg and M.~P. Friedlander.
\newblock {SPGL1}: A solver for large-scale sparse reconstruction, June 2007.
\newblock http://www.cs.ubc.ca/labs/scl/spgl1.

\bibitem{vanFrie}
E.~van~den Berg and M.~P. Friedlander.
\newblock Probing the {Pareto} {Frontier} for {Basis} {Pursuit} {Solutions}.
\newblock {\em SIAM Journal on Scientific Computing}, 31(2):890, 2009.

\bibitem{UQ_book}
D.~Xiu.
\newblock {\em Numerical {Methods} for {Stochastic} {Computations}: {A}
  {Spectral} {Method} {Approach}}.
\newblock Princeton University Press, July 2010.

\bibitem{XiuK2}
D.~Xiu and G.~E. Karniadakis.
\newblock The {Wiener}--{Askey} {Polynomial} {Chaos} for {Stochastic}
  {Differential} {Equations}.
\newblock {\em SIAM Journal on Scientific Computing}, 24(2):619--644, Jan.
  2002.

\bibitem{XUZHOU}
Z.~Xu and T.~Zhou.
\newblock On sparse interpolation and the design of deterministic interpolation
  points.
\newblock {\em SIAM Journal on Scientific Computing}, 36(4):A1752--A1769, 2014.

\bibitem{XiuL1}
L.~Yan, L.~Guo, and D.~Xiu.
\newblock Stochastic collocation algorithms using l1-minimization.
\newblock {\em International Journal for Uncertainty Quantification},
  2(3):279--293, 2012.

\bibitem{yangzhang}
J.~Yang and Y.~Zhang.
\newblock Alternating {Direction} {Algorithms} for
  \${\textbackslash}ell\_1\$-{Problems} in {Compressive} {Sensing}.
\newblock {\em SIAM Journal on Scientific Computing}, 33(1):250--278, Jan.
  2011.

\bibitem{Osher1}
W.~Yin, S.~Osher, D.~Goldfarb, and J.~Darbon.
\newblock Bregman {Iterative} {Algorithms} for $\ell^1$-{Minimization} with
  {Applications} to {Compressed} {Sensing}.
\newblock {\em SIAM Journal on Imaging Sciences}, 1(1):143--168, Jan. 2008.

\bibitem{Zhou_Narayan_Xu}
T.~Zhou, A.~Narayan, and Z.~Xu.
\newblock Multivariate {Discrete} {Least}-{Squares} {Approximations} with a
  {New} {Type} of {Collocation} {Grid}.
\newblock {\em SIAM Journal on Scientific Computing}, 36(5):A2401--A2422, 2014.

\end{thebibliography}

\end{document}